\newtheorem{thm}{Theorem}[section]
\newtheorem{defi}[thm]{Definition}
\newtheorem{lem}[thm]{Lemma}
\newtheorem{cor}[thm]{Corollary}
\newtheorem{rmk}[thm]{Remark}
\def\Aut{{\rm Aut}}
\def\Hom{{\rm Hom}}
\def\ord{{\rm ord}}
\def\min{{\rm min}}
\def\max{{\rm max}}
\def\inf{{\rm inf}}
\def\sup{{\rm sup}}
\def\lim{{\rm lim}}
\def\limsup{{\rm lim\,sup}}
\def\liminf{{\rm lim\,inf}}
\def\dif{{\rm d}}
\def\interior{{\rm int}}
\def\Supp{{\rm Supp}}
\def\gr{{\rm gr}}
\def\Gr{{\rm Gr}}
\def\pr{{\rm pr}}
\def\wt{{\rm wt}}
\def\Rees{{\rm Rees}}
\def\Proj{{\rm Proj}}
\def\Spec{{\rm Spec}}
\def\triv{{\rm triv}}
\def\vol{{\rm vol}}
\def\nvol{{\rm \widehat{vol}}}
\def\Val{{\rm Val}}
\def\lct{{\rm lct}}
\def\LE{{\rm LE}}
\def\Ex{{\rm Ex}}
\def\mult{{\rm mult}}
\def\Fut{{\rm Fut}}
\def\Cone{{\rm Cone}}
\def\red{{\rm red}}
\def\div{{\rm div}}
\def\QM{{\rm QM}}
\def\LC{{\rm LC}}
\def\Reeb{{\mathbf{t}^+_\mathbb{R}}}
\def\dist{{\rm dist}}
\def\BP{\mathbf{P}}
\def\BO{\mathbf{O}}
\def\BL{\mathbf{L}}
\def\BD{\mathbf{D}}
\def\BJ{\mathbf{J}}
\def\BF{\mathbf{F}}
\def\B0{\mathbf{0}}
\def\Bt{\mathbf{t}}
\def\By{\mathbf{y}}
\def\Balpha{\boldsymbol{\alpha}}
\def\Bbeta{\boldsymbol{\beta}}
\def\bg{\bar{g}}
\def\bpsi{\bar{\psi}}
\def\bphi{\bar{\phi}}
\def\tX{\tilde{X}}
\def\tS{\widetilde{S}}
\def\tCS{\widetilde{\CS}}
\def\talpha{\tilde{\alpha}}
\def\tell{\tilde{\ell}}
\newcommand{\IA}{{\mathbb A}}
\newcommand{\IG}{{\mathbb G}}
\newcommand{\Ik}{{\mathbbm k}}
\newcommand{\IN}{{\mathbb N}}
\newcommand{\IQ}{{\mathbb Q}} 
\newcommand{\IR}{{\mathbb R}}
\newcommand{\IT}{{\mathbb T}}
\newcommand{\IZ}{{\mathbb Z}}
\newcommand{\CF}{{\mathcal F}}
\newcommand{\CG}{{\mathcal G}}
\newcommand{\CO}{{\mathcal O}}
\newcommand{\CR}{{\mathcal R}}
\newcommand{\CS}{{\mathcal S}}
\newcommand{\CV}{{\mathcal V}}
\newcommand{\CX}{{\mathcal X}}
\newcommand{\fa}{\mathfrak{a}}
\newcommand{\fm}{\mathfrak{m}}
\newcommand{\fv}{\mathfrak{v}}
\newcommand{\seq}{\subseteq}
\newcommand{\la}{\langle}
\newcommand{\ra}{\rangle}
\newcommand{\bu}{\bullet}
\newcommand{\lam}{\lambda}
\newcommand{\D}{\Delta}
\newcommand{\vep}{\varepsilon}
\def\opi{{\overline{\pi}}}
\def\oY{{\overline{Y}}}
\def\oE{{\overline{E}}}
\title{K-polystability and reduced uniform K-stability of log Fano cone singularities}
\author{Linsheng Wang}
\address{Shanghai Center for Mathematical Sciences, Fudan University, Shanghai, 200438, China}
\curraddr{}
\email{linsheng\_wang@fudan.edu.cn}
\thanks{}
\keywords{}
\date{}
\dedicatory{}
\begin{document}

\maketitle

\begin{abstract}
We prove that a log Fano cone $(X,\Delta,\xi_0)$ satisfying $\delta_\mathbb{T}(X,\Delta,\xi_0)\ge 1$ is K-polystable for normal test configurations if and only if it is K-polystable for special test configurations. We also establish the reduced uniform K-stability of $(X,\Delta,\xi_0)$ and show that it is equivalent to K-polystability. 

\end{abstract}


\section{Introduction}
\label{Section: Introduction}

The algebraic study of local K-stability theory has made great progress in recent years. It centers around the so-called {\it stable degeneration conjecture} (see \cite{Li18,LX18}): Let $x\in(X=\Spec(R),\D)$ be a Kawamata log terminal (klt) singularity. Then there exists a unique (up to rescaling) quasi-monomial valuation $v_0$ minimizing the normalized volume function $\nvol:\Val_{X,x}\to \IR_{\ge 0} \cup \{+\infty\}$ such that $\Gr_{v_0}R$ is finitely generated and the degeneration $(X_0,\D_0,\xi_0)$ induced by $v_0$ is a K-semistable log Fano cone. This conjecture is widely studied and solved now. The K-semistability of $(X_0,\D_0,\xi_0)$ was established by \cite{LX18} and \cite{XZ20}.  The existence and quasi-monomialness of $v_0$ were proved by \cite{Blu18} and \cite{Xu19} respectively. The uniqueness of $v_0$ was given by \cite{XZ20,BLQ22}. Finally, it was proved by \cite{XZ22} that $\Gr_{v_0}R$ is finitely generated, which finished the proof of the conjecture. 


By \cite{LWX18}, the K-semistable log Fano cone $(X_0,\D_0,\xi_0)$ has a unique (up to isomorphism) special degeneration $(X_p,\D_p,\xi_0)$, which is K-polystable for special test configurations (see \cite[Definition 2.23]{LWX18}). By the local YTD-type theorem (see \cite{CS18} for $\D=0$ and \cite{Li21} for general case), any log Fano cone $(Y,\D_Y,\xi_Y)$ admits a weak Ricci-flat K\"ahler cone metric if and only if it is K-polystable for $\IQ$-Gorenstein test configurations. One may expect that $(X_p,\D_p,\xi_0)$ is indeed K-polystable for $\IQ$-Gorenstein test configurations, or more generally, for normal test configurations. 
 
In the global setting, this was conjectured by \cite{Tia97} that a Fano variety is K-semi/polystable for normal test configurations if and only if it is K-semi/polystable for special test configurations. This was proved in \cite{LX14} by running MMP on families of Fano varieties over curves to decent the generalized Futaki invariants. 

In this paper, we solve the above problem for K-polystability of log Fano cones using the higher rank finite generation theory developed by \cite{LXZ22,XZ22}. 
The problem for K-semistability of log Fano cones was recently solved by Liu and Wu \cite{LW24}. Based on the non-Archimedean pluripotential theory of Fano cone singularities developed by \cite{Wu22,Wu24}, they introduced the functional $\BL^{\rm NA}$ and gave a new formulation of the Ding invariant $\BD^{\rm NA}$ (\cite[Definition A.1]{LWX18}). They showed that $\Fut\ge \BD^{\rm NA}$ for any normal test configuration, and the equality holds for weakly special test configurations, hence for special test configurations. In particular, we have
\begin{eqnarray}
\text{Ding-ps for normal TC} &\Rightarrow&
\text{K-ps for normal TC} \\ &\Rightarrow&
\text{K-ps for special TC} \,\,\,\Leftrightarrow\,\,\, \nonumber
\text{Ding-ps for special TC}. 
\end{eqnarray}
To show the equivalence of these polystability conditions, it suffices to prove the following. 
\begin{thm}
\label{Theorem: Intro. Ding-ps for special test configurations implies Ding-ps}
Let $(X,\D,\xi_0)$ be a log Fano cone satisfying $\delta_\IT(X,\D,\xi_0)\ge 1$. If it is Ding-polystable for special test configurations, then it is Ding-polystable for normal test configurations. 
\end{thm}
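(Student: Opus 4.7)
The plan is to argue by contradiction, using the higher rank finite generation theory of \cite{LXZ22,XZ22} to degenerate a hypothetical destabilizing normal test configuration into a special one, where the polystability hypothesis can be applied. Suppose $(X,\D,\xi_0)$ is Ding-polystable for special test configurations but there is a normal test configuration $(\CX,\D_\CX,\xi_0;\eta)$ with $\BD^{\rm NA}(\CX)\le 0$ that does not come from a one-parameter subgroup of $\IT$. First I would associate to $\CX$ its canonically induced $\IT$-equivariant finitely generated $\IZ$-filtration $\CF$ on the coordinate ring $R$ of $X$; by the non-Archimedean formulation of \cite{Wu22,Wu24,LW24}, one has $\BD^{\rm NA}(\CF)=\BD^{\rm NA}(\CX)\le 0$, and this quantity depends only on $\CF$.

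Next I would exploit the hypothesis $\delta_\IT(X,\D,\xi_0)\ge 1$ through the torus twist. For $\xi\in N(\IT)_\IR$ the twisted filtration $\CF_\xi$ determines a reduced Ding invariant $\inf_{\xi} \BD^{\rm NA}(\CF_\xi)$, and $\delta_\IT\ge 1$, viewed via \cite{LW24} as reduced Ding-semistability, forces this infimum to be $\ge 0$, hence equal to $0$. By strict convexity of the map $\xi\mapsto \BD^{\rm NA}(\CF_\xi)$ transverse to $N(\IT)_\IR$ (the log Fano cone analogue of Hisamoto's result, which needs to be established using the $\BL^{\rm NA}$ formalism of \cite{LW24}), the infimum is attained at some $\xi_\ast$; after replacing $\CF$ by $\CF_{\xi_\ast}$ I may assume $\BD^{\rm NA}(\CF)=0$ and that $\CF$ minimizes the Ding functional within its $\IT$-orbit.

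The heart of the argument is then to pass from $\CF$ to a special test configuration. I would invoke the higher rank finite generation theorem (the log Fano cone analogue of \cite{XZ22}, built on \cite{LXZ22}): the Ding-minimality of $\CF$ forces its associated multi-graded algebra to be finitely generated, producing a degeneration of $(X,\D,\xi_0)$ through a weakly special test configuration. Running a $\BD^{\rm NA}$-decreasing MMP in the spirit of \cite{LX14,LWX18} then yields a special test configuration $(\CX^s,\D^s,\xi_0;\eta^s)$ with $\BD^{\rm NA}(\CX^s)\le \BD^{\rm NA}(\CF)=0$. By Ding-polystability for special test configurations, $\CX^s$ must be a $\IT$-product, i.e.\ $\eta^s\in N(\IT)_\IR$. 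Backtracking through the finite generation construction, this triviality (modulo $\IT$) propagates back to $\CF$, forcing $\CF$ to come from a 1-PS of $\IT$ and hence $\CX$ to be a $\IT$-product, contradicting the assumption.

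The main obstacle is the higher rank finite generation step: one must adapt \cite{XZ22} to the cone setting, handling the extra grading by the Reeb field $\xi_0$ and the $\IT$-action, and verify that the Ding-minimizing property of $\CF$ implies finite generation of the multi-graded algebra (rather than merely of some truncation). A secondary but nontrivial difficulty is the back-propagation: one needs to track the $\IT$-weights through the weakly special MMP-degeneration to conclude that triviality of $\eta^s$ modulo $\IT$ actually forces the original filtration $\CF$ to be toric along $\IT$, rather than merely its degeneration. The convexity/uniqueness input from the reduced uniform K-stability companion result of this paper will be the key technical ingredient making this step work.
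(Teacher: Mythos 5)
Your proposal diverges significantly from the paper's argument and contains a concrete error.

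The error: you claim strict convexity of $\xi\mapsto \BD^{\rm NA}(\CF_\xi)$ and use it to locate a minimizing twist $\xi_\ast$. In fact this function is \emph{constant}, not strictly convex. By the paper's Lemmas on twisting, $\lct(\CF_\xi)=\lct(\CF)+A(\xi)$ and $S(\xi_0;\CF_\xi)=S(\xi_0;\CF)+S(\xi_0;\xi)$, so $\BD(\CF_\xi)=\BD(\CF)+A(\xi)-A(\xi_0)S(\xi_0;\xi)$, and the correction term vanishes as soon as $\Fut$ vanishes on the torus — which is forced by Ding-semistability. The genuinely convex functional in this context is $\BJ(\xi_0;\CF_\xi)$ (the paper's Lemma~\ref{Lemma: J is convex w.r.t twist}), which is relevant to reduced uniform stability, not to the present statement. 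So the whole torus-twist step of your plan accomplishes nothing, and the ``Ding-minimizing'' normalization you rely on is vacuous.

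More broadly, your route — degenerate the destabilizing normal test configuration to a special one via a $\BD$-decreasing MMP in the style of Li--Xu, then back-propagate the triviality — is not what the paper does, and it is substantially harder. The paper's actual mechanism is: (i) the filtration $\CF$ of a normal test configuration is automatically \emph{saturated}; (ii) a quasi-monomial valuation $v$ computing $\lct(\CF)$ gives $\CF\subseteq\CF_v$ with equal $\lct$ and, once $\BD(\CF)=\BD(v)=0$, equal $S$-invariants; (iii) Lemma~\ref{Lemma. saturated fa seq fb, S(fa)=S(fb) implies fa=fb} (both sides saturated with the same $S$) then forces $\CF=\CF_v$; (iv) since $v$ minimizes $\delta_\IT=1$, Theorem~\ref{Theorem. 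Local optimal destablization} (the local optimal destabilization theorem built on \cite{XZ22}) shows $v$ is a Koll\'ar valuation, which perturbs to a non-toric Koll\'ar component and hence a non-product special test configuration with $\BD=0$. The saturation lemma is the key structural input you are missing; it is what removes any need for finite generation of the filtration (which, for a test configuration, is automatic anyway) and any need for an MMP descent. Your acknowledged difficulties — proving finite generation in the multi-graded cone setting and the back-propagation of triviality through a weakly special degeneration — are precisely what the paper's approach sidesteps. A valid proof along your lines would require developing a local cone version of the Li--Xu $\BD$-descent, which is genuinely new work; the paper instead reduces everything to the already-established higher-rank finite generation package via the delta-minimizer.
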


We will use another form of the Ding invariant (with respect to the language of {\it filtrations}, see \cite{BLQ22}), which suits our computation better (analogous to \cite[Chapter 3]{Xu24}). For simplicity of notation, we will use $\BD$ instead of $\BD^{\rm NA}$. Let $(X,\D,\xi_0)$ be a log Fano cone with a $\IT$-action and $\CF$ be a $\IT$-invariant linearly bounded filtration on it. We define the {\it Ding invariant} of $\CF$ by 
\begin{eqnarray}
\BD_{X,\D,\xi_0}(\CF) := \lct(X,\D;\CF) - A(\xi_0)S(\xi_0;\CF). 
\end{eqnarray}
We will show that, in the quasi-regular case, this is equal to the Ding invariant of $\CF$ on the anti-canonical ring of the quotient log Fano pair. Hence it is the same with the Ding invariants defined by \cite{LWX18} and \cite{LW24}. With the above form of Ding invariant, we get another proof of the following characterization of Ding-semistability. 
\begin{thm}
\label{Theorem. Intro. Ding-ss for filtrations equiv. delta>=1}
Let $(X,\D,\xi_0)$ be a log Fano cone. It is $\IT$-equivariantly Ding-semistable for filtrations if and only if $\delta_\IT(X,\D,\xi_0)\ge 1$. 
\end{thm}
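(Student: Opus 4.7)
The plan is to emulate the Fujita--Li characterization of Ding semistability via the $\delta$-invariant (cf.\ \cite[Chapter 3]{Xu24}), with the global volume and log-discrepancy bookkeeping replaced by the $\xi_0$-weighted Duistermaat--Heckman calculus on the Fano cone (cf.\ \cite{Wu22,Wu24,LW24}). The key technical input will be a $\IT$-equivariant basis-type divisor approximation: for any $\IT$-invariant linearly bounded filtration $\CF$ on $R$, I would produce $\IT$-invariant $m$-basis-type $\IQ$-divisors $D_m$ by averaging the zero-loci of an $\CF$-compatible $\IT$-equivariant basis of a finite-dimensional $\xi_0$-weight slab of $R$, so that $\lct(X,\D;D_m)\to \lct(X,\D;\CF)$ and the average $\xi_0$-weight of the chosen basis tends to $A(\xi_0)S(\xi_0;\CF)$, with the analogous convergence holding for $v(D_m)$ and any $\IT$-invariant valuation $v$.

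For the easy direction $(\IT$-Ding-ss$\,\Rightarrow\,\delta_\IT\ge 1)$, I would test Ding-ss on filtrations coming from valuations. Given a $\IT$-invariant valuation $v$ with $A_{X,\D}(v)<+\infty$, form $\CF_v^t R_\alpha:=\{f\in R_\alpha:v(f)\ge t\}$, which is $\IT$-invariant and linearly bounded. A direct calculation gives $\lct(X,\D;\CF_v)=A_{X,\D}(v)$---every basis-type divisor of $\CF_v$ is computed by $v$ itself---and $S(\xi_0;\CF_v)=S(\xi_0;v)$ by construction. Ding-ss applied to $\CF_v$ then reads $A_{X,\D}(v)\ge A(\xi_0)S(\xi_0;v)$; taking the infimum over all such $v$ gives $\delta_\IT(X,\D,\xi_0)\ge 1$.

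For the converse $(\delta_\IT\ge 1\,\Rightarrow\,\IT$-Ding-ss$)$, I would fix a $\IT$-invariant $\CF$ together with $\IT$-invariant compatible $m$-basis-type $\IQ$-divisors $D_m$. The valuative description of the log canonical threshold, combined with $\IT$-equivariance, yields
$$\lct(X,\D;D_m)=\inf_{v}\frac{A_{X,\D}(v)}{v(D_m)},$$
where the infimum runs over $\IT$-invariant quasi-monomial valuations. Substituting $A_{X,\D}(v)\ge A(\xi_0)S(\xi_0;v)$ from the hypothesis and bounding $v(D_m)$ by $S(\xi_0;v)$ times the average $\xi_0$-weight of the basis---via a weighted basis-exchange argument that tests both $v$ and $\xi_0$ against the same $\CF$-compatible diagonal basis---gives $\lct(X,\D;D_m)\ge A(\xi_0)\cdot(\text{average }\xi_0\text{-weight})+o_m(1)$. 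Passing to the limit in $m$ via the basis-type approximation produces $\lct(X,\D;\CF)\ge A(\xi_0)S(\xi_0;\CF)$, i.e.\ $\BD_{X,\D,\xi_0}(\CF)\ge 0$.

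The main obstacle will be setting up the basis-type approximation with the correct $\xi_0$-weighted normalization: the simultaneous Fekete-type convergence of $\lct(X,\D;D_m)$ to $\lct(X,\D;\CF)$ and of the $v$-integrals $v(D_m)$ for all $\IT$-invariant valuations $v$. This is the local and equivariant avatar of the Fujita--Odaka and Blum--Jonsson convergence theorems, made somewhat delicate by the fact that $\xi_0$-weights need not be rational---one has to work with slabs $\{\alpha:\la\alpha,\xi_0\ra\in[m,m+1)\}$ of the $\IT$-character lattice rather than with honest graded pieces. Once this foundation is in place, the remainder of the argument should transfer from the global Fano case essentially verbatim.
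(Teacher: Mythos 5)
Your easy direction is essentially the paper's, modulo one overclaim. You assert $\lct(X,\D;\CF_v)=A_{X,\D}(v)$ for arbitrary $v\in\Val^{\IT,*}_{X,x}$, explaining that ``every basis-type divisor of $\CF_v$ is computed by $v$ itself''. That equality is \emph{not} true in general: since $v(\CF_v)=1$, one only has $\lct(\CF_v)\le A_{X,\D}(v)$, and the equality holds precisely when $v$ is weakly special (this is Theorem~\ref{Theorem. local weakly special criterion}). Fortunately the inequality $\lct(\CF_v)\le A_{X,\D}(v)$ is all the argument needs: $0\le\BD(\CF_v)=\lct(\CF_v)-A(\xi_0)S(\xi_0;v)$ then yields $A_{X,\D}(v)\ge A(\xi_0)S(\xi_0;v)$. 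So the direction goes through; just replace your claimed equality with the one-sided bound.

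Your converse is a genuinely different route from the paper, and at present it is only an outline with the load-bearing steps left unproved. You are porting the Fujita--Odaka/Blum--Jonsson basis-type divisor machinery to the cone: construct $\IT$-invariant basis-type divisors $D_m$, establish a Fekete-type convergence $\lct(X,\D;D_m)\to\lct(X,\D;\CF)$ and a matching convergence of $\xi_0$-weighted averages, and run a basis-exchange argument against a basis jointly compatible with $\CF$ and $\CF_v$. None of these is established in your write-up (you call them the ``main obstacle''), and the claimed bound ``$\lct(X,\D;D_m)\ge A(\xi_0)\cdot(\text{average }\xi_0\text{-weight})+o_m(1)$'' has the normalization backwards: if $v(D_m)\le S_m(\xi_0;v)\cdot(\text{avg.\ }\xi_0\text{-wt.})$ and $A_{X,\D}(v)\ge A(\xi_0)S(\xi_0;v)$, the quotient $\frac{A_{X,\D}(v)}{v(D_m)}$ is bounded below by $A(\xi_0)$ \emph{divided} by the average weight, and there is also a mismatch between $S_m$ and $S$ that needs a separate uniformity estimate. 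The paper sidesteps all of this with a short valuative argument: take a $\IT$-invariant quasi-monomial $v$ computing $\lct(X,\D;\CF)$, rescaled so that $v(\CF)=1$. Then $\CF\seq\CF_v$, whence $A_{X,\D}(v)=\lct(\CF)\le\lct(\CF_v)\le A_{X,\D}(v)$ forces $\lct(\CF)=A_{X,\D}(v)$, and $\CF\seq\CF_v$ also gives $S(\xi_0;\CF)\le S(\xi_0;\CF_v)=S(\xi_0;v)$. Hence
\begin{equation*}
\BD(\CF)=\lct(\CF)-A(\xi_0)S(\xi_0;\CF)\ge A_{X,\D}(v)-A(\xi_0)S(\xi_0;v)\ge 0,
\end{equation*}
the last inequality being exactly $\delta_\IT\ge 1$. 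No basis-type divisors, no slab-by-slab approximation, no $\xi_0$-irrationality to fight. If you want to pursue the basis-type route you would need to set up the full local equivariant Fujita--Odaka package; for this particular theorem it is wasted effort, because the filtration-to-valuation reduction above does the job in four lines.
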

We will say that $(X,\D,\xi_0)$ is Ding-semistable if it is $\IT$-equivariantly Ding-semistable for filtrations. 
One of the key steps in the proof of Theorem \ref{Theorem: Intro. Ding-ps for special test configurations implies Ding-ps} is the following local version of the optimal destabilization theorem. 
\begin{thm}
\label{Theorem. Local optimal destablization}
Let $(X,\D,\xi_0)$ be a log Fano cone. If $\delta_{\IT}(X,\D,\xi_0)$ admits a quasi-monomial minimizer $v$, then $v$ is a Koll\'ar valuation, and there exists a Koll\'ar component $E$ minimizing $\delta_{\IT}(X,\D,\xi_0)$. 
\end{thm}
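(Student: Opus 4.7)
The plan is to adapt the optimal destabilization theorem of Liu--Xu--Zhuang \cite{LXZ22} to the log Fano cone setting, using the higher rank finite generation theorem of Xu--Zhuang \cite{XZ22} as the main technical input. The starting observation is that the minimizer condition gives $\delta_\IT(X,\D,\xi_0) = A_{X,\D}(v)/\bigl(A(\xi_0)S(\xi_0;v)\bigr)$, and a suitable rescaling of the pair reduces us to the Ding-semistable case $\delta_\IT = 1$, where Theorem \ref{Theorem. Intro. Ding-ss for filtrations equiv. delta>=1} forces $\BD_{X,\D,\xi_0}(\CF_v) = 0$, i.e. $\lct(X,\D;\CF_v) = A_{X,\D}(v) = A(\xi_0)S(\xi_0;\CF_v)$.

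The first step, and the one I expect to be the main obstacle, is to promote this numerical equality into geometry: produce a $\IT$-invariant effective $\IQ$-divisor $\Gamma$ on $X$ (a bounded $N$-complement) such that $(X,\D+\Gamma)$ is log canonical and $v$ is a monomial lc place. The global analogue in \cite{LXZ22} and \cite[Chapter 4]{Xu24} uses boundedness of $\IQ$-complements for Fano pairs of bounded dimension combined with ACC of log canonical thresholds; the local version for log Fano cones should follow by combining a local ACC for lc thresholds with a $\IT$-equivariant complement construction compatible with the Reeb vector $\xi_0$. Once $\Gamma$ is constructed, the higher rank finite generation theorem \cite{XZ22} applies: since $v$ is a monomial lc place of $(X,\D+\Gamma)$, the associated graded ring $\gr_v R$ is finitely generated, which by definition makes $v$ a Koll\'ar valuation. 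This settles the first assertion and simultaneously produces a degeneration of $(X,\D,\xi_0)$ to a log Fano cone carrying an extended torus $\IT'\supset \IT$ of rank $\mathrm{rk}(\IT)+\mathrm{rat.rk}(v)$, inside whose rational polyhedral cone of invariant toric valuations both $v$ and $\xi_0$ sit.

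Finally, to extract a divisorial minimizer I would carry out a convex-analytic argument on this cone: $A_{X,\D}$ extends linearly and $S(\xi_0;\cdot)$ is concave there, so $\delta_\IT$ is quasi-convex and its minimum is attained on a face which is rational polyhedral and therefore contains a rational ray. Perturbing $v$ along such a rational direction inside the minimizing face yields a divisorial valuation $E$ that is still a monomial lc place of $(X,\D+\Gamma)$, hence is extracted by a plt blow-up: $E$ is the desired Koll\'ar component minimizing $\delta_\IT(X,\D,\xi_0)$.
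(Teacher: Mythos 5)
Your overall route via the higher rank finite generation theorem of Xu--Zhuang is the same one the paper takes, and the opening observations (the minimizer condition forces $\lct(X,\D;\CF_v)=A_{X,\D}(v)=A(\xi_0)S(\xi_0;v)$) are correct. But there are two genuine gaps, and the first one is precisely where the content of the theorem lives.

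\textbf{Gap 1: a $\IQ$-complement is not enough; you need a \emph{special} $\IQ$-complement.} You propose to construct, from the numerical equality, a $\IT$-invariant $\IQ$-complement $\Gamma$ with $v$ a monomial lc place, and then assert that the higher rank finite generation theorem ``applies: since $v$ is a monomial lc place of $(X,\D+\Gamma)$, the associated graded ring $\gr_v R$ is finitely generated.'' This is not what Theorem \ref{Theorem. XZ22 higher rank finite generation} says. Condition (2) there requires $\Gamma$ to be a \emph{special} $\IQ$-complement with respect to some toroidal model --- i.e.\ $\pi_*^{-1}\Gamma$ must dominate an ample $\IQ$-divisor whose support avoids all strata of $E$. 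Being a quasi-monomial lc place of an arbitrary $\IQ$-complement only makes $v$ \emph{weakly special} (Theorem \ref{Theorem. local weakly special criterion}), and there are weakly special valuations whose graded rings are not finitely generated; that distinction is exactly what the whole higher rank finite generation theory is about. The paper bridges this gap with Theorem \ref{Theorem. delta minimizer is very special}: the $\delta$-minimizer remains an lc place computing the log canonical threshold even after perturbing the boundary to $\D+D$, where $D\sim_\IQ-(K_X+\D)$ is chosen so that $\pi_*^{-1}D$ is ample on a log resolution and its support contains no stratum of $E$, and $\wt_{\xi_0}(D)$ is small enough for the $S$-invariant estimate (Lemma \ref{Lemma. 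S-invariant of effective divisor}) to kick in. The resulting complement $\Gamma\ge D$ is then automatically special with respect to the chosen toroidal model; this is the step you cannot get from ``boundedness of complements plus ACC of lcts'' alone, since those only produce some $N$-complement with no control on its proper transform.

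\textbf{Gap 2: quasi-convexity does not give a rational minimizer.} Your convex-analytic argument says $A_{X,\D}$ is linear, $S(\xi_0;\cdot)$ is concave, so $A/S$ is quasi-convex and ``its minimum is attained on a face.'' Quasi-convexity only tells you the minimum set is convex, not that it is a face; it could well be a single irrational interior point, at which point the argument stalls. What the paper actually uses is the stronger fact (Corollary \ref{Corollary. S is linear on special cone}, which relies on the geodesic-equals-line-segment Theorem \ref{Theorem. Special cone: geodesic = line sigment}) that $S(\xi_0;\cdot)$ is \emph{linear} on the special quasi-monomial cone $\sigma$ through $v$. Then $A-\delta S$ is a non-negative linear function on $\sigma$ vanishing at the interior point $v$, hence vanishes identically, so every rational point of $\sigma$ is a minimizer and any such point is a Koll\'ar component. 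Concavity of $S$ on a general quasi-monomial cone (Corollary \ref{Corollary. S is concave on cone}) is not enough to run this argument.

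Incidentally, you note that the minimizer hypothesis together with $\delta_\IT\le1$ effectively forces $\delta_\IT=1$ (Remark \ref{Remark. non-existence of delta minimizer}); the paper does not rely on this reduction, since Theorem \ref{Theorem. delta minimizer is very special} is stated for arbitrary $\delta$, but it is a harmless observation.
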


Moreover, we establish the reduced uniform K/Ding-stability of log Fano cones and show that they are equivalent to K/Ding-polystability. 

\begin{thm}
\label{Theorem: Intro. Ding-ps for special test configurations implies reduced uniform Ding}
Let $(X,\D,\xi_0)$ be a log Fano cone singularity satisfying $\delta_\IT(X,\D,\xi_0)\ge 1$. If it is Ding-polystable for special test configurations, then it is reduced uniformly Ding-stable for filtrations. 
\end{thm}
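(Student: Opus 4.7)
The plan is to argue by contradiction, combining Theorem \ref{Theorem. Local optimal destablization} with the higher rank finite generation theorem of \cite{XZ22}. Suppose that $(X,\D,\xi_0)$ is Ding-polystable for special test configurations with $\delta_\IT(X,\D,\xi_0)\ge 1$ but fails to be reduced uniformly Ding-stable for filtrations. Then there exists a sequence of $\IT$-invariant linearly bounded filtrations $\CF_i$ satisfying $J^\IT(\CF_i)=1$ and $\BD_{X,\D,\xi_0}(\CF_i)\to 0$. After replacing each $\CF_i$ by a suitable $\xi_i$-twist with $\xi_i\in\mathfrak{t}$, we may further arrange $J(\CF_i)=J^\IT(\CF_i)=1$, since $J^\IT$ is by definition the infimum of $J$ over such twists and $\BD$ is $\mathfrak{t}$-invariant.

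The core technical step is to extract from this minimizing sequence a quasi-monomial $\IT$-invariant valuation $v$ that computes the reduced stability threshold
$$\delta^{\rm red}_\IT(X,\D,\xi_0):=\inf_{\CF:\,J^\IT(\CF)>0}\frac{\lct(X,\D;\CF)}{A(\xi_0)\,S(\xi_0;\CF)},$$
where the infimum runs over $\IT$-invariant linearly bounded filtrations. By Theorem \ref{Theorem. Intro. Ding-ss for filtrations equiv. delta>=1} we have $\delta^{\rm red}_\IT\ge 1$, and the failure of reduced uniform Ding-stability forces $\delta^{\rm red}_\IT=1$ with the $\CF_i$ forming a minimizing sequence. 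Following the strategy of Liu--Xu--Zhuang in the global Fano case, one applies the higher rank finite generation theorem of \cite{XZ22} together with Okounkov body and Duistermaat--Heckman limit arguments to promote $\{\CF_i\}$ to a quasi-monomial valuation $v$ realizing this infimum, automatically with $J^\IT(v)>0$.

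Once such a minimizer $v$ is in hand, Theorem \ref{Theorem. Local optimal destablization} upgrades it to a Koll\'ar component $E$ over $(X,\D)$ that also attains $\delta^{\rm red}_\IT=1$. The component $E$ induces a special test configuration $\CX_E$ of $(X,\D,\xi_0)$ whose Ding invariant, computed via the filtration formula, satisfies $\BD(\CX_E)=\lct(X,\D;\ord_E)-A(\xi_0)\,S(\xi_0;\ord_E)=0$, while $J^\IT(\ord_E)>0$ ensures that $\CX_E$ is not a product test configuration even after any further twist by an element of $\mathfrak{t}$. This directly contradicts the hypothesis of Ding-polystability for special test configurations, completing the argument.

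The main obstacle is the extraction step in the second paragraph: producing a quasi-monomial minimizer of $\delta^{\rm red}_\IT$ from the abstract minimizing sequence of filtrations. This requires carefully adapting higher rank finite generation (originally developed for global Fano varieties) to the log Fano cone setting, combined with convergence results for filtrations via Okounkov bodies and Duistermaat--Heckman measures, and one must verify both the convergence of Ding invariants and the preservation of $J^\IT>0$ in the limit. The remaining steps are then a by-now-standard combination of local optimal destabilization with the polystability hypothesis at the level of special test configurations.
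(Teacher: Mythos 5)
Your overall strategy---argue by contradiction, extract a quasi-monomial valuation from a destabilizing minimizing sequence, upgrade it to a Koll\'ar component via Theorem~\ref{Theorem. Local optimal destablization}, and then contradict polystability for special test configurations---matches the paper's final two steps. However, you yourself flag the ``main obstacle'' as producing a quasi-monomial minimizer from a minimizing sequence of filtrations, and this is precisely the gap: the extraction you sketch (``higher rank finite generation together with Okounkov body and Duistermaat--Heckman limit arguments'') is not an argument, and it is not clear it can be made into one. The higher rank finite generation results of \cite{XZ22} address when $\Gr_vR$ is finitely generated for a single valuation; they are not a compactness device for sequences of filtrations. Moreover you need the limiting object to land in $\Val^{\IT,*}_{X,x}$ with controlled $\lam_\max$, $\lam_\min$, $S$, and with $J^\IT>0$; none of this follows from a DH/Okounkov body limit without a substantial amount of extra structure.

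The paper's route around this is a two-step reduction you have not identified. First, Theorem~\ref{Theorem. reduced uniform Ding. filtrations = weakly special valuations} shows that the inequality $\BD(\CF)\ge c\,\BJ_\IT(\CF)$ for all $\IT$-invariant linearly bounded filtrations is equivalent to the same inequality for filtrations $\CF_v$ of weakly special valuations $v$: one replaces a filtration by the $\CF_v$ of the lc place $v$ computing $\lct(X,\D;\CF)$, which is weakly special by Theorem~\ref{Theorem. local weakly special criterion}, and then checks that this substitution can only decrease $\BD-c\BJ_\IT$ by tracking the effect of twists and the identity $\lct(\CF_\xi)=\lct(\CF)+A(\xi)$. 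This converts the sequence of filtrations into a sequence of weakly special valuations $v_i$ with $\BD(v_i)/\BJ_\IT(v_i)\to 0$. Second, after a normalizing $\xi_0$-twist to arrange $A_{X,\D}(v_i)=1$ and a uniform lower bound $v_i(\fm_x)>\vep_0$, Theorem~\ref{Theorem. limit of weakly special valuations} (a local compactness theorem for weakly special valuations, proved via boundedness of $N$-complements \`a la \cite{Xu19,Bir19}) produces a subsequence with a quasi-monomial limit $v$ inside a single simplicial cone $\sigma\seq\LC(X,\D+\Gamma)$. Continuity of $S(\xi_0;-)$ and $\lam_\max(\xi_0;-)$ on $\sigma$ then shows $\BJ(\xi_0;v_i)$ stays bounded, so $\BD(v_i)\to 0$, hence $\BD(v)=0$, and $v$ is not of the form $\wt_\xi$. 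Only at this point does Theorem~\ref{Theorem. Local optimal destablization} enter, exactly as you propose in your last step. In short, your endgame is right, but the mechanism that makes the extraction tractable is the reduction to weakly special valuations together with the compactness theorem for weakly special valuations, not a filtration-level finite generation argument.

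Two smaller points. The twist you perform to get $J(\CF_i)=J^\IT(\CF_i)$ requires the infimum in the definition of $\BJ_\IT$ to be attained; the paper proves this in Lemma~\ref{Lemma: J is convex w.r.t twist} via convexity and properness of $[\xi]\mapsto\BJ(\xi_0;\CF_\xi)$, so you should cite or reprove that. And you should make explicit why the limit $v$ is not of the form $\wt_\xi$ (the paper arranges this by choosing the $v_i$ outside the Reeb cone and by noting that the linear structure of the cone $\sigma$ forces the limit away from the torus valuations), since this is what guarantees the resulting special test configuration is non-product.
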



\begin{cor}
\label{Corollary. Intro. all equivalent}
Let $(X,\D,\xi_0)$ be a log Fano cone singularity satisfying $\delta_\IT(X,\D,\xi_0)\ge 1$. Then the following are all equivalent: 
\begin{enumerate}
\item it is K-polystable for special test configurations; 
\item it is K-polystable for normal test configurations; 
\item it is Ding-polystable for special test configurations; 
\item it is Ding-polystable for normal test configurations; 
\item it is reduced uniformly K-stable for special test configurations; 
\item it is reduced uniformly K-stable for normal test configurations; 
\item it is reduced uniformly Ding-stable for special test configurations; 
\item it is reduced uniformly Ding-stable for normal test configurations; 
\item it is reduced uniformly Ding-stable for filtrations. 
\end{enumerate} 
\end{cor}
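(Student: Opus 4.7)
The plan is to organize the nine conditions into a ``polystability block'' (1)--(4) and a ``reduced-uniform-stability block'' (5)--(9), close each block internally, and then link them by one downward and one upward arrow.

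For the polystability block the chain $(4)\Rightarrow(2)\Rightarrow(1)\Leftrightarrow(3)$ is exactly the displayed equation (1) of the introduction (it rests on the Liu--Wu comparison $\Fut\ge\BD$ for normal test configurations, with equality on weakly special ones). The missing arrow $(3)\Rightarrow(4)$ is Theorem~\ref{Theorem: Intro. Ding-ps for special test configurations implies Ding-ps}, which under the hypothesis $\delta_\IT(X,\D,\xi_0)\ge 1$ closes (1)--(4) into a single equivalence class.

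For the reduced-uniform block I would place (9) at the top and peel off implications. Since $\IT$-invariant linearly bounded filtrations contain all normal, and hence all special, test configurations, restriction yields $(9)\Rightarrow(8)\Rightarrow(7)$. Applying $\Fut\ge\BD$ termwise converts reduced uniform Ding-stability into reduced uniform K-stability on the same class, giving $(8)\Rightarrow(6)$, $(7)\Rightarrow(5)$, and restriction again gives $(6)\Rightarrow(5)$. Hence (9) implies each of (5)--(9).

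The downward connection between the two blocks is the standard implication that reduced uniform stability implies polystability: a bound $\BD(\CF)\ge\vep\|\CF\|_{\IT,\red}$ forces any configuration with $\BD\le 0$ to have vanishing reduced norm, and hence to be a product test configuration after an appropriate $\IT$-translation. This supplies $(7)\Rightarrow(3)$, $(5)\Rightarrow(1)$, and analogues, so the reduced-uniform block collapses into the polystability block. The upward connection is Theorem~\ref{Theorem: Intro. Ding-ps for special test configurations implies reduced uniform Ding}, which directly gives $(3)\Rightarrow(9)$ and closes the circle.

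The actual mathematical content lies almost entirely in the two inputs already stated, Theorem~\ref{Theorem: Intro. Ding-ps for special test configurations implies Ding-ps} and Theorem~\ref{Theorem: Intro. Ding-ps for special test configurations implies reduced uniform Ding}; the corollary itself is a diagram-chase, and so I expect no new obstacle here. The one bookkeeping point I would verify carefully is that the ``restriction from filtrations to test configurations'' step really preserves slopes for reduced uniform Ding-stability, so that $(9)\Rightarrow(8)$ holds without any loss in the constant $\vep$; once the definitions of the reduced norm on the two sides are checked to agree on the test-configuration subclass, the whole chain of nine equivalences assembles as above.
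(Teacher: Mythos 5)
Your proof is correct and follows essentially the same route as the paper's: close the polystability block $(4)\Rightarrow(2)\Rightarrow(1)\Leftrightarrow(3)$ and the reduced-uniform block via $\Fut\ge\BD$ (Corollary~\ref{Corollary. Fut ge Ding}), restrict from filtrations to test configurations for $(9)\Rightarrow(8)\Rightarrow(7)$, descend from reduced uniform to polystable, and finish with Theorem~\ref{Theorem: Intro. Ding-ps for special test configurations implies reduced uniform Ding} for $(3)\Rightarrow(9)$. One small point where you are actually more careful than the paper's own write-up: the paper's proof only records $(8)\Rightarrow(4)$ via Corollary~\ref{Corollary. reduced uniform to polystable}, and then $(8)\Rightarrow(6)\Rightarrow(5)\Leftrightarrow(7)$, so (5), (6), (7) are implied by the main equivalence class but no return arrow from them is spelled out; you explicitly supply $(7)\Rightarrow(3)$ and $(5)\Rightarrow(1)$ (the same Corollary~\ref{Corollary: J(CX)=0} argument applied to special test configurations), which genuinely closes the cycle. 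You also invoke Theorem~\ref{Theorem: Intro. Ding-ps for special test configurations implies Ding-ps} directly for $(3)\Rightarrow(4)$ whereas the paper obtains it through $(3)\Rightarrow(9)\Rightarrow(8)\Rightarrow(4)$; both are fine. Your final bookkeeping worry is harmless: the reduced $J$-norm and Ding invariant of a test configuration are by definition those of its Rees filtration, so the restriction step loses nothing.
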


Since $\Fut\ge \BD$ for any normal test configuration and the equality holds for special test configurations, we have naturally
$(4)\Rightarrow (2) \Rightarrow (1) \Leftrightarrow (3)$ and $(8)\Rightarrow (6) \Rightarrow (5) \Leftrightarrow (7)$. On the other hand, one may easily show that reduced uniform Ding-stability implies Ding-polystability, that is, $(8)\Rightarrow (4)$ hold. 
Since the space of filtrations is larger than the space of test configurations, we have $(9)\Rightarrow (8)$. 
We need to prove the non-trivial implication $(3)\Rightarrow (9)$, 
that is, Theorem \ref{Theorem: Intro. Ding-ps for special test configurations implies reduced uniform Ding}. 

We sketch the proof of Theorem \ref{Theorem: Intro. Ding-ps for special test configurations implies reduced uniform Ding}. For any linearly bounded filtration $\CF$, there exists a sequence of finitely generated filtrations $\CF_m$ approximating it (Definition \ref{Definition. approximating sequences}). We first establish the convergence of $S$ and $\BJ_\IT$ (Theorem \ref{Theorem. S convergence. approximating sequences} and \ref{Theorem. convergence of J_T, approximating sequence}). Then the question is reduced to checking reduced uniform Ding stability for finitely generated filtrations. The problem is further reduced to checking for Koll\'ar components (Corollary \ref{Corollary. reduced uniform Ding for filtrations and finitely generated filtrations}). Assuming the contrary, there exists a sequence of $\IT$-invariant Koll\'ar components $v_i$ such that $\BD(v_i)/\BJ_\IT(v_i) \to 0$. Using the argument in \cite{Xu19,XZ19}, one may show that $v_i$ converge to a $\IT$-invariant quasi-monomial valuation $v$ such that $\BD(v)=0$ and $v$ is not of product type. Hence $v$ is a Koll\'ar valuation and is not of product type. Perturbing $v$ to a Koll\'ar component $w$, we still have $\BD(w)=0$ and $w$ is not of product type. Then $w$ induces a non-product type special test configuration with vanishing $\BD$, which is a contradiction.




The paper is organized as follows. In Section \ref{Section: Preliminaries} we collect some results in the local filtration theory and recall the basic properties of log Fano cone singularities. In Section \ref{Section. Polystability}, we first define local K/Ding-semi/polystability. Then recall the higher rank finite generation theory developed by \cite{LXZ22,XZ22} and prove the local optimal destablization theorem (Theorem \ref{Theorem. Local optimal destablization}). At the end of the section, we prove Theorem \ref{Theorem: Intro. Ding-ps for special test configurations implies Ding-ps}. In Section \ref{Section. Relations between K/Ding-stability}, we establish the inequality $\Fut\ge \BD$ for normal test configurations by perturbing the Reeb vector $\xi_0$. Finally, we introduce the notion of reduced uniform K/Ding-stability of a log Fano cone in Section \ref{Section. Reduced uniform stability}, and show that they are equivalent to K/Ding-polystability. In the appendix, we establish a Blum-Jonsson type uniform estimate of local S-invariants using local Okounkov bodies.

\noindent {\bf Acknowledgments}. The author would like to thank Jiyuan Han, Yuchen Liu, Junyao Peng, Lu Qi and Chuyu Zhou for helpful discussions. The author was partially supported by the NKRD Program of China (\#2025YFA1018100), (\#2023YFA1010600), (\#2020YFA0713200) and LNMS.

\section{Preliminaries}
\label{Section: Preliminaries}

We work over an algebraically closed field $\Ik$ of characteristic zero. We follow the standard terminology from \cite{KM98,Kol13}. 

Let $x\in(X=\Spec(R),\D)$ be a klt singularity of dimension $n$. We denote by $\Val_X$ the set of valuations over $X$, by $\Val_{X, x}\seq \Val_{X}$ the subset of valuations centered at $x$, by $\Val^*_{X}\seq \Val_X$ the subset of valuations with finite log discrepancies. If $x\in(X,\D)$ admits a torus $\IT$-action, then we denote by $\Val^{\IT}_{X}\seq \Val_X$ the subset of $\IT$-invariant valuations. We also set $\Val^*_{X,x}=\Val_X^*\cap\Val_{X,x}$ and $\Val^{\IT,*}_{X,x}=\Val_X^\IT\cap \Val^*_{X,x}$. 

A model $(Y,E)$ over a pair $(X,\D)$ consists of a projective birational morphism $\pi:Y\to X$ and a reduced divisor $E$ on $Y$. It is called a {\it log smooth} (resp. {\it toroidal}) model if $(Y,\Supp(E+\Ex(\pi)+\pi_*^{-1}\D))$ is simple normal crossing (resp. toroidal). 
Let $(X,\D)$ be a klt pair that is projective over a quasi-projective variety $U$ such that $-(K_X+\D)$ is ample. A {\it $\IQ$-complement} $\Gamma$ of $(X,\D)$ is an effective $\IQ$-divisor on $X$ such that $(X,\D+\Gamma)$ is log canonical and $K_X+\D+\Gamma \sim_\IQ 0$. In the case that $x\in(X,\D)$ is a klt singularity, we assume moreover that $x$ is a log canonical center of $(X,\D+\Gamma)$. We denote the space of log canonical places by 
$\LC(X,\D+\Gamma) := \{v\in \Val_X\mid A_{X,\D+\Gamma}(v)=0\}$.

\subsection{Filtrations}
Let $x\in(X=\Spec(R),\D)$ be a klt singularity of dimension $n$. We recall the definition of $\fm_x$-filtrations on $R$ introduced by \cite{BLQ22}. 

\begin{defi}\rm
\label{Definition. filtrations of klt singularities}
An \textit{($\fm_x$-)filtration} is a collection $\CF=\CF^\bu = \{\CF^\lam = \CF^\lam R\}_{\lambda \in \mathbb{R}_{>0}}$ of $\mathfrak{m}_x$-primary ideals of $R$ such that:
\begin{enumerate}
    \item $\CF^{\lambda} \supseteq \CF^{\mu}$ when $\lambda \le \mu$,
    \item $\CF^{\lambda} = \CF^{\lambda - \epsilon}$ when $0 < \epsilon \ll 1$, and
    \item $\CF^{\lambda} \cdot \CF^{\mu} \subseteq \CF^{\lambda + \mu}$ for any $\lambda, \mu \in \mathbb{R}_{>0}$.
\end{enumerate}
By convention, we set $\CF^0 := R$. For any $f\in R$, we denote by $\ord_\CF(f)=\max\{\lam\ge 0\mid f\in \CF^\lam\}. $
\end{defi}

For any filtration $\CF$ and $a>0$, we define the {\it $a$-rescaling} $a\CF$ of $\CF$ by 
\begin{eqnarray*}
(a\CF)^\lam = \CF^{\lam/a}, \quad \lam\in\IR_{\ge 0}.  
\end{eqnarray*} 
Let $v\in\Val_{X,x}$, we denote by $\CF_v$ the filtration 
\begin{eqnarray*}
\CF_v^\lam= \{f\in R \mid v(f)\ge \lam\}, \quad \lam\in\IR_{\ge 0}. 
\end{eqnarray*} 

\begin{rmk}\rm
\label{Remark. N-filtrations}
The definition is a local analog of a filtration of the section ring of a polarized variety in \cite{BHJ17}. 
For any graded ideal sequence $\fa_\bu=\{\fa_m\}_{m\in\IN}$ of $\fm_x$-primary ideals (satisfying $\fa_0=R, \fa_m\cdot \fa_{m'} \seq \fa_{m+m'}$ for any $m,m'\in\IN$), it is naturally a filtration by setting $\CF^\lam:=\fa_{\lceil \lam \rceil}$ for any $\lam\in \IR_{\ge0}$, which is also called a {\it $\IN$-filtration}. 
Conversely, any filtration $\CF^\bu=\{\CF^\lam\}_{\lam\in\IR_{\ge0}}$ gives rise to a $\IN$-filtration $\CF_\IN^{\lam} := \CF^{\lceil \lam \rceil}$ for any $\lam\in\IR_{\ge0}$. 
\end{rmk}

Let $\CF$ and $\CG$ be filtrations. 
Then $\CF$ is said to be {\it linearly bounded} by $\CG$ if there exists $C>0$ such that $\CF^{C\lam}\seq \CG^{\lam}$ for any $\lam>0$. If $\CG$ is also linearly bounded by $\CF$, we simply say that $\CF$ and $\CG$ are linearly bounded by each other. In this case, we set 
\begin{eqnarray}
\lam_\max(\CG;\CF) 
&:=& \sup\{\lam \in \IR_{>0} \mid \CF^{\lam m} \nsubseteq \CG^m, \exists m\in \IN\}, \\ 
\label{Eqnarray. Lam_max lam_min = 1}
\lam_\min(\CG;\CF) 
&:=& \inf\{\mu\in\IR_{>0} \mid \CG^m \nsubseteq \CF^{\mu m}, \exists m\in\IN \}
\,\,\,=\,\,\,\lam_\max(\CF;\CG)^{-1}. 
\end{eqnarray} 
We also have 
\begin{eqnarray*}
\lam_\max(\CG;\CF) 
&=& \inf\{\lam \in \IR_{>0} \mid \CF^{\lam m}\seq \CG^m, \forall m\in\IN \}, \\
\lam_\min(\CG;\CF) 
&=& \sup\{\mu\in\IR_{>0} \mid \CG^m \seq \CF^{\mu m}, \forall m\in\IN \}.
\end{eqnarray*} 
Note that the filtration $\{\fm^{\lceil \lam \rceil}\}_{\lam\in\IR_{>0}}$ is linearly bounded by any filtration $\CF$. Indeed, since $\CF^1$ is $\fm$-primary, there exists $N\in\IZ_{\ge 1}$ such that $\fm^N \seq \CF^1$. Hence for any $\lam\ge1$, $\fm^{N\lceil \lam \rceil} \seq (\CF^1)^{\lceil \lam \rceil} \seq \CF^{\lceil \lam \rceil} \seq \CF^\lam$. 
The filtration $\CF$ is called {\it linearly bounded} if it is linearly bounded by the filtration $\{\fm^{\lceil \lam \rceil}\}_{\lam\in\IR_{>0}}$. 
By the Izumi-type inequality \cite{Li18}, we see that $\CF_v$ is linearly bounded for any $v\in\Val_{X,x}^*$. 

Let $\CF$ be a filtration and $v\in \Val_X$. By \cite[Proposition 2.3]{JM12}, we define 
$$v(\CF):=  \mathop{\inf}_{m\in\IN} \frac{v(\CF^m)}{m}= \mathop{\lim}_{m\to \infty} \frac{v(\CF^m)}{m}. $$
We may define for each $m\in\IN$, 
$$\lam^{(m)}_\min(\CF;\CF_v) := \inf\{\mu \in \IR_{>0} \mid \CF^m \nsubseteq \CF_v^\mu\} = v(\CF^m), $$
and similarly show that 
$\lam_\min(\CF;\CF_v) = \inf_m m^{-1}\lam^{(m)}_\min(\CF;\CF_v)$. Hence 
\begin{eqnarray}
\label{Eqnarray. v(F) = lam_min(F; v)}
v(\CF) = \lam_\min(\CF;\CF_v). 
\end{eqnarray}

Assume that $\CF$ and $\CF_v$ are linearly bounded. We also define for each $m\in\IN$, 
$$\lam^{(m)}_\max(\CF_v;\CF) := \max\{\lam \in \IR_{>0} \mid \CF^{\lam } \nsubseteq \CF_v^m\} \,\,\,(\le m\lam_\max(\CF_v;\CF)). $$
The maximum exists since $\CF$ is left continuous. 
We have 
$$p\lam^{(m)}_\max(\CF_v;\CF) \le \lam^{(pm)}_\max(\CF_v;\CF).$$
Otherwise, we have $\CF^{p\lam^{(m)}_\max}\seq \CF_v^{pm}$ and $\CF^{\lam^{(m)}_\max}\nsubseteq \CF_v^m$ for some $m\in\IN$. Then there exists $ s\in \CF^{\lam^{(m)}_\max}\setminus \CF_v^m$. We get a contradiction since $v(s^p)=pv(s)<pm$ and $s^p\in\CF^{p\lam^{(m)}_\max}\setminus \CF_v^{pm}$. Hence
\begin{eqnarray}
\label{Eqnarray. lam_max^(m) convergence}
\lam_\max(\CF_v;\CF) =
\mathop{\sup}_{m\in \IN} \frac{\lam^{(m)}_\max(\CF_v;\CF)}{m} =
\mathop{\lim}_{m\to \infty} \frac{\lam^{(m)}_\max(\CF_v;\CF)}{m}. 
\end{eqnarray}

By \cite[Lemma 1.49]{Xu24}, we define 
$$
\lct(\CF) = \lct(X,\D;\CF) := \mathop{\sup}_{\lam>0} \lam\cdot\lct(X,\D;\CF^\lam) = \mathop{\lim}_{m\to \infty} m\cdot \lct(X,\D;\CF^m). 
$$
The {\it saturation} $\tilde{\CF}$ of a filtration $\CF$ is defined by 
$$\tilde{\CF}^\lam = \{f\in R \mid v(f)\ge \lam \cdot v(\CF), \forall v\in \Val_{X,x}^*\}. $$
It's clear that $\CF^\lam\seq\tilde{\CF}^\lam$. A filtration $\CF$ is called {\it saturated} if $\CF=\tilde{\CF}$.




In the following, we fix a valuation $v_0\in\Val_{X,x}^*$ and the corresponding linearly bounded $\IN$-filtration $\CF_0 = \CF_{v_0}$ on $R$. In applications, we will choose $v_0$ to be the normalized volume minimizer or $v_0=\wt_{\xi}$ for some $\xi$ in the Reeb cone. 
For any linearly bounded filtration $\CF$ ($\CF^{Cm}\seq \fm^{m}$), we define the {\it multiplicity} of $\CF$ by 
\begin{eqnarray}
\mult(\CF)
&:=& \mathop{\limsup}_{m\to \infty} \frac{\ell (R/\CF^{m})}{m^n/n!} \\
&\ge& \mathop{\lim}_{m\to \infty} \frac{\ell (R/\fm^{m/C})}{m^n/n!} 
= \mult(\fm^{\bu/C}) = C^n \mult(\fm) > 0, \nonumber
\end{eqnarray}
and the {\it relative multiplicity} of $\CF$ with respect to $\CF_0$ (\cite[Lemma-Definition 3.1]{XZ20}) by 
\begin{eqnarray}
\mult(\CF_0;\CF) 
:= \mathop{\lim}_{m\to \infty} \frac{\ell (\CF^m(R/\CF_0^m))}{m^n/n!} = \mult(\CF_0^\bu\cap\CF^\bu) -\mult(\CF^\bu), 
\end{eqnarray}
where the second equality follows from 
\begin{eqnarray*}
\ell(\CF^m(R/\CF_0^m)) = \ell(\CF^m/(\CF_0^m\cap \CF^m)) = \ell(R/(\CF_0^m \cap \CF^m)) -\ell(R/\CF^m). 
\end{eqnarray*}



Next we define the relative $S$-invariant $S(\CF_0;\CF)$ slightly generalizing that of \cite[Section 3.1]{XZ20}. Let $\CF$ be a linearly bounded filtration. By \cite[Lemma 2.1]{XZ20}, for any \( m \in \mathbb{N} \) there exist \( f_1, \cdots, f_{N_m} \in R \) (where \( N_m = \ell(R/\CF_0^{m}) \)) which are compatible with both \( \CF_0 \) and \( \CF \) such that their images \( \bar{f_i} \) form a basis of \( R/\CF_0^{m} \). We define 
\begin{eqnarray*}
\tS_m(\CF_0;\CF) := \sum_{i=1}^{N_m} \ord_\CF(f_i) = \sum_{\lam\ge 0} \lam \cdot \ell(\Gr_\CF^\lam(R/\CF_0^{m})), \quad 
S_m(\CF_0;\CF)
:=\frac{\tS_m(\CF_0; \CF)}{\tS_m(\CF_0; \CF_0)}. 
\end{eqnarray*}
which does not depend on the choice of $\{f_i\}$. 

If $\CF$ is a $\IN$-filtration, then 
\begin{eqnarray*}
\tS_m(\CF_0; \CF)
= \sum_{j=1}^{\lam_\max^{(m)}} j\cdot \ell(\Gr_\CF^j(R/\CF_0^{m}))
=\sum_{j=1}^{\lam_\max^{(m)}}\ell(\CF^j(R/\CF_0^{m})),
\end{eqnarray*}
where $\lam_\max^{(m)}=\lam_\max^{(m)}(\CF_0;\CF)$. 
For example, $\tS_m(\CF_0; \CF_0) = \sum_{i=0}^{m-1} i\cdot\ell(\Gr_{\CF_0}^i R)$. By \cite[Lemma 3.3]{XZ20}, the limit of $S_m(\CF_0;\CF)$ exists and we define
\begin{eqnarray}
\label{Eqnarray. Convergence of S_m}
\mathop{\lim}_{m\to\infty} S_m(\CF_0;\CF) 
=: S(\CF_0;\CF) 
= \frac{n+1}{n}\int_0^\infty \frac{\mult(\CF_0;t^{-1}\CF)}{\mult(\CF_0)} \dif t. 
\end{eqnarray}
Note that $\mult(\CF_0;t^{-1}\CF) = 0$ for any $t> \lam_\max(\CF_0;\CF)$. Hence ``$\infty$'' could be replaced by ``$\lam_\max(\CF_0;\CF)$'' in the above integration. 
We have another formulation of $S(\CF_0;\CF)$. Let 
\begin{eqnarray*}
S'_{m}(\CF_0;\CF)
=\frac{\tS_{m+1}(\CF_0; \CF)-\tS_{m}(\CF_0; \CF)}{\tS_{m+1}(\CF_0; \CF_0)-\tS_{m}(\CF_0; \CF_0)} 
= \frac{\sum_j j\cdot \ell(\Gr_{\CF}^j\Gr_{\CF_0}^m R)}{m\cdot\ell(\Gr_{\CF_0}^m R)}. 
\end{eqnarray*}
Then by Stolz's theorem, we also have $\lim_{m\to \infty}S'_m(\CF_0;\CF)=S(\CF_0;\CF)$.

In general, for any $\IR_{\ge0}$-filtration $\CF$, let $\CF_\IN$ be the corresponding $\IN$-filtration. Then for any $f\in R$, 
$$\ord_{\CF_\IN}(f) = \lfloor\ord_{\CF}(f)\rfloor \le \ord_{\CF}(f) < \ord_{\CF_\IN}(f)+1. $$Hence $\tS_m(\CF_0;\CF_\IN) \le \tS_m(\CF_0;\CF) < \tS_m(\CF_0;\CF_\IN) + N_m. $ Since (see for example (\ref{Eqnarray. tS/m^(n+1) limit = n vol }))
\begin{eqnarray}
\label{Eqnarray. N_m and tS(F0;F0)}
\mathop{\lim}_{m\to \infty} \frac{N_m}{m^{n}/n!} = \mult(\CF_0), \quad
\mathop{\lim}_{m\to \infty} \frac{\tS_m(\CF_0;\CF_0)}{m^{n+1}/(n+1)!} = n\cdot \mult(\CF_0), 
\end{eqnarray}
we see that the limit $S(\CF_0; \CF) := \lim_{m\to \infty} S_m(\CF_0; \CF)$ exists and $S(\CF_0; \CF) = S(\CF_0; \CF_\IN)$.

By definition, for any $a,b>0$, we have 
\begin{eqnarray*}
\lam_\max(a\CF_0;b\CF) = a^{-1}b\cdot\lam_\max(\CF_0;\CF), \quad
S(a\CF_0;b\CF) = a^{-1}b\cdot S(\CF_0;\CF). 
\end{eqnarray*} 

We define the {\it J-norm} of $\CF$ (with respect to $\CF_0$) by 
\begin{eqnarray*}
\BJ(\CF_0;\CF)  \,\,\, :=\,\,\, 
\lam_\max(\CF_0;\CF) - S(\CF_0;\CF) \ge 0. 
\end{eqnarray*}

\begin{lem}
If $\CF\seq \CF'$, then $\BF(\CF_0;\CF)\le \BF(\CF_0;\CF')$ for $\BF=\lam_\min,\lam_\max$ and $S$. 
\end{lem}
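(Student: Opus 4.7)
The plan is to read off each of the three inequalities directly from the pointwise monotonicity $\ord_\CF(f)\le \ord_{\CF'}(f)$ for all $f\in R$, which is immediate from the hypothesis $\CF^\lam\seq (\CF')^\lam$ for every $\lam>0$. No new geometric input is needed; the whole argument is a bookkeeping check against the definitions collected above.

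For $\BF=\lam_\max$, I would work with the set-theoretic formula $\lam_\max(\CF_0;\CF)=\sup\{\lam>0\mid \CF^{\lam m}\nsubseteq \CF_0^m\text{ for some }m\in\IN\}$. If $\CF^{\lam m}\nsubseteq \CF_0^m$, then $(\CF')^{\lam m}\supseteq \CF^{\lam m}$ also fails to lie inside $\CF_0^m$, so the defining set for $\CF$ is contained in that for $\CF'$; taking suprema gives $\lam_\max(\CF_0;\CF)\le \lam_\max(\CF_0;\CF')$. For $\BF=\lam_\min$ the argument is dual: if $\CF_0^m\nsubseteq (\CF')^{\mu m}$ for some $m$, then a fortiori $\CF_0^m\nsubseteq \CF^{\mu m}$, so the set defining $\lam_\min(\CF_0;\CF')$ sits inside the one defining $\lam_\min(\CF_0;\CF)$, and the infimum of the smaller set dominates the infimum of the larger one.

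For $\BF=S$, I would first reduce to the case of $\IN$-filtrations: from $\CF\seq \CF'$ one gets $\CF^{\lceil\lam\rceil}\seq (\CF')^{\lceil\lam\rceil}$, hence $\CF_\IN\seq \CF'_\IN$, while the identity $S(\CF_0;\CF)=S(\CF_0;\CF_\IN)$ recorded above lets us replace $\CF,\CF'$ by $\CF_\IN,\CF'_\IN$. For $\IN$-filtrations, the representation $\tS_m(\CF_0;\CF)=\sum_{j\ge 1}\ell\bigl(\CF^j(R/\CF_0^m)\bigr)$ is monotone termwise in the filtration: the inclusion $\CF^j\seq (\CF')^j$ forces $\CF^j(R/\CF_0^m)\seq (\CF')^j(R/\CF_0^m)$, so $\ell(\CF^j(R/\CF_0^m))\le \ell((\CF')^j(R/\CF_0^m))$ and $\tS_m(\CF_0;\CF)\le \tS_m(\CF_0;\CF')$. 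Dividing by the common normalizing factor $\tS_m(\CF_0;\CF_0)$ and passing to the limit $m\to\infty$ yields $S(\CF_0;\CF)\le S(\CF_0;\CF')$.

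There is no substantive obstacle here. Each case is an elementary monotonicity check once the corresponding definition is unfolded; the only mildly technical point is the $\IN$-filtration reduction for $S$, which is handled by the identity $S(\CF_0;\CF)=S(\CF_0;\CF_\IN)$ already established in the text.
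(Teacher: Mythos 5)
Your proof is correct, and since the paper states this lemma without proof, your argument fills in exactly the routine definitional unfolding the author evidently has in mind: for $\lam_\max$ and $\lam_\min$ you compare the defining sets using $\CF^\lam\seq(\CF')^\lam$, and for $S$ you use termwise monotonicity of lengths. One small simplification for the $S$ case: you can bypass the $\IN$-filtration reduction entirely by taking a basis $\{f_i\}$ of $R/\CF_0^m$ compatible with both $\CF_0$ and $\CF$ and noting $\tS_m(\CF_0;\CF)=\sum_i\ord_\CF(f_i)\le\sum_i\ord_{\CF'}(f_i)\le\tS_m(\CF_0;\CF')$, where the last step uses that $\tS_m(\CF_0;\CF')$ is the maximum of $\sum_i\ord_{\CF'}(f_i)$ over all bases (attained by a basis compatible with $\CF'$); this makes the pointwise inequality $\ord_\CF\le\ord_{\CF'}$ do all the work as you advertised in your opening sentence.
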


\begin{lem}
\label{Lemma. saturated fa seq fb, S(fa)=S(fb) implies fa=fb}
Let $x\in(X,\D)$ be a klt singularity, $\CF_0$ and $\CF\seq \CG$ be linearly bounded filtrations. If $S(\CF_0;\CF) = S(\CF_0;\CG)$, then $v(\CF)=v(\CG)$ for any $v\in\Val_{X,x}^*$.  

If we assume moreover that $\CF$ and $\CG$ are saturated, then $\CF=\CG$. 
\end{lem}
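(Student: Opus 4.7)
The strategy is to use the integral formula (\ref{Eqnarray. Convergence of S_m}) to convert $S(\CF_0;\CF) = S(\CF_0;\CG)$ into an equality of relative multiplicities of rescaled filtrations, and then apply a local Rees-type rigidity to deduce $v(\CF) = v(\CG)$ for every $v \in \Val_{X,x}^*$; the second assertion is then immediate. First I record the monotonicity $\mult(\CF_0;\CH)\le\mult(\CF_0;\CH')$ whenever $\CH\seq\CH'$, which follows at once from $\ell(\CH^m(R/\CF_0^m)) \le \ell(\CH'^m(R/\CF_0^m))$. Since $(s\CF)^m = \CF^{m/s} \seq \CG^{m/s} = (s\CG)^m$ for every $s>0$, this gives $\mult(\CF_0; s\CF) \le \mult(\CF_0; s\CG)$ pointwise, hence via (\ref{Eqnarray. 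Convergence of S_m}) gives $S(\CF_0;\CF) \le S(\CF_0;\CG)$. Equality of $S$'s forces the nonnegative integrand $\mult(\CF_0;t^{-1}\CG) - \mult(\CF_0;t^{-1}\CF)$ to vanish for almost every $t>0$, so $\mult(\CF_0; s\CF) = \mult(\CF_0; s\CG)$ for almost every $s>0$.

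Next, I fix $v\in\Val_{X,x}^*$. The inclusion $\CF\seq\CG$ already gives $v(\CF)\ge v(\CG)$. Supposing for contradiction that $v(\CF)>v(\CG)$, for $m$ large there exist $g\in\CG^m$ with $v(g)<m\,v(\CF)$ (by the infimum formula $v(\CG)=\inf_m v(\CG^m)/m$), while $\CF^m\seq\CF_v^{m\,v(\CF)}$, so such $g$ cannot lie in $\CF^m$. The plan is to upgrade these single witnesses into a positive-density collection of linearly independent classes in $\CG^m(R/\CF_0^N)/\CF^m(R/\CF_0^N)$ for some $N$ proportional to $m$, producing order $m^n$ such classes via a graded-piece counting argument on $R/\CF_0^N$ that uses the linear boundedness of both $\CF_v$ and $\CF_0$ (Izumi). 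This contradicts the multiplicity equality; since $v(s\CH) = v(\CH)/s$, applying this rigidity at any $s$ in the almost-everywhere set of the previous paragraph yields $v(\CF) = v(\CG)$.

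For the second assertion, the definition of saturation gives $\CF^\lam=\{f\in R: u(f)\ge\lam\, u(\CF),\ \forall u\in\Val_{X,x}^*\}$, and similarly for $\CG$. Since $u(\CF)=u(\CG)$ for every $u$ by the first part, the defining conditions coincide and $\CF^\lam=\CG^\lam$ for every $\lam$. The main obstacle is the local Rees-type rigidity in the second paragraph: promoting a single $v$-witness in $\CG^m\setminus\CF^m$ into a quantitative gap of order $m^n$ between $\ell(\CF^m(R/\CF_0^N))$ and $\ell(\CG^m(R/\CF_0^N))$. The count must carefully track both $v$-values and $\CF_0$-values on the graded pieces of $R/\CF_0^N$, leveraging the Izumi-type linear bounds on both filtrations.
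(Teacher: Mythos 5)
The proposal has a genuine gap precisely at the step you yourself flag as the ``main obstacle.'' Your outline reduces matters to the following rigidity claim: if $v(\CF) > v(\CG)$ for some $v\in\Val_{X,x}^*$, then the single witnesses $g\in\CG^m\setminus\CF^m$ can be ``upgraded'' into roughly $m^n$ independent classes in $\CG^m(R/\CF_0^N)/\CF^m(R/\CF_0^N)$, producing a gap in the relative multiplicities. You never carry out this upgrade, and it is not a routine counting exercise; one cannot simply multiply a single witness by a basis of $R/\fm_x^M$ and expect the products to remain linearly independent modulo $\CF^m\,R/\CF_0^N$, nor is it clear how ``tracking both $v$-values and $\CF_0$-values on graded pieces'' would control the resulting lengths. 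As written, the argument has no content at exactly the point where content is needed; all the surrounding maneuvers (converting to a.e.\ equality of $\mult(\CF_0;s\CF)$ via the integral formula, rescaling $s\CH$, the monotonicity remark, the saturation step) are correct but are the easy parts.

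The paper resolves the obstacle with a specific construction that you should compare against your plan. After rescaling so that $v(\CF)=1 > v(\CG)$, pick $a\in\IZ_{\ge 1}$ and $f\in\CG^a\setminus\CF^a$ with $b:=v(f)<a$, and choose $c$ with $c(a-b)\ge 1$. Then consider the $R$-module map $\phi:R\to\CG^{ack}/\CF^{ack}$, $g\mapsto g\cdot f^{ck}$. The key point is $\ker\phi\seq\CF_v^k$: if $g f^{ck}\in\CF^{ack}$ then $v(g)+bck\ge v(\CF^{ack})\ge ack$, hence $v(g)\ge c(a-b)k\ge k$. This gives $\ell(\CG^{ack}/\CF^{ack})\ge\ell(R/\CF_v^k)$ in a single step, with no need for a positive-density family of independent classes: the $m^n$-growth you were trying to manufacture comes for free from $\ell(R/\CF_v^k)\sim\vol(v)\,k^n/n!$. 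Summing over $k$ and comparing against $\tS_{cdm}(\CF_0;\CF_0)$ via Stolz' theorem yields $S(\CF_0;\CG)-S(\CF_0;\CF)\ge\frac{1}{n(cd)^{n+1}}\cdot\frac{\vol(v)}{\mult(\CF_0)}>0$, the desired contradiction. Note also that the paper stays with the difference $\tS_m(\CF_0;\CG)-\tS_m(\CF_0;\CF)$ directly rather than routing through the integral formula and a.e.\ equality of multiplicities; this sidesteps having to pick an $s$ in the a.e.\ set and argue uniformity in $s$. Your handling of the saturated case at the end is correct and matches the standard argument (the paper simply cites \cite[Proposition 3.7]{BLQ22} for the same point).
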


\begin{proof}
Otherwise, there exists $v\in\Val_{X,x}^*$ such that $1=v(\CF)>v(\CG)$. Then there exists $a\in\IZ_{\ge1}$ and $f\in\CG^a\setminus \CF^a$ such that $a>v(f)=:b$. We choose $c\in\IZ_{\ge1}$ such that $c(a-b)\ge1$. Consider the map $\phi: R\to \CG^{ack}/\CF^{ack}, g\mapsto g\cdot f^{ck}$. We have $\ker(\phi)\seq \CF^{k}_v$ since 
\begin{eqnarray*}
g\in\ker(\phi) &\Leftrightarrow& 
g\cdot f^{ck} \in\CF^{ack} \,\,\,\Rightarrow\,\,\,
v(g)+bck= v(g\cdot f^{ck}) \ge v(\CF^{ack})\ge ack \\
&\Rightarrow& v(g)\ge c(a-b)k\ge k 
\,\,\,\Rightarrow\,\,\, g\in \CF_v^k. 
\end{eqnarray*} 

We denote by $R_m:=R/\CF_{0}^{m}$ for any $m\in\IN$. Since $\CF_0$ and $\CF$ are linearly bounded by each other, there exists $d\in\IZ_{\ge 1}$ such that $\CF_0^{dm}\seq \CF^{am}$ for any $m\in\IN$. In particular, $\CG^j/\CF^j \cong \CG^j R_{dm}/\CF^j R_{dm}$ for any $j\le am$. Hence 
\begin{eqnarray*}
\tS_{cdm}(\CF_0;\CG) - \tS_{cdm}(\CF_0;\CF) 
&=& \sum_{j=1}^\infty \ell(\CG^j R_{cdm}/\CF^j R_{cdm})\\
&\ge&  \sum_{j=1}^{acm} \ell(\CG^j/\CF^j) 
\ge \sum_{k\in\IN, 1\le ack\le acm} \ell(\CG^{ack}/\CF^{ack}) \\
&\ge& \sum_{k=1}^{m} \ell(\CG^{ack}/\CF^{ack}) 
\ge \sum_{k=1}^{m} \ell(R/\CF_v^k) \,\,\,=:\,\,\, a_m. 
\end{eqnarray*} 
By Stolz's theorem, we have 
$$\mathop{\lim}_{m\to \infty}\frac{a_m}{m^{n+1}/(n+1)!} = \mathop{\lim}_{m\to \infty}\frac{\ell(R/\CF_v^m)}{m^{n}/n!} = \vol(v). $$
We set $b_m:=\tS_m(\CF_0;\CF_0) = m\ell(R_m)- \sum_{j=1}^m\ell(R_{j})$. Also by Stolz's theorem, we have
\begin{eqnarray}
\label{Eqnarray. tS/m^(n+1) limit = n vol }
\mathop{\lim}_{m\to \infty}\frac{b_m}{m^{n+1}/(n+1)!}
&=& \mathop{\lim}_{m\to \infty}\frac{m \ell(\CF_0^{m}/\CF_0^{m+1})}{m^{n}/n!} \\
\nonumber
&=& \mathop{\lim}_{m\to \infty} n\cdot\frac{\ell(\CF_0^{m}/\CF_0^{m+1})}{m^{n-1}/(n-1)!}  
=n\cdot \mult(\CF_0). 
\end{eqnarray} 
Hence
\begin{eqnarray*}
S(\CF_0;\CG) - S(\CF_0;\CF) 
\ge \mathop{\lim}_{m\to \infty}\frac{a_m}{b_{cdm}} 
= \frac{1}{n(cd)^{n+1}} \cdot \frac{\vol(v)}{\mult(\CF_0)} > 0. 
\end{eqnarray*} 
We get a contradiction. 

If $\CF$ and $\CG$ are saturated, then $\CF=\CG$ by \cite[Proposition 3.7]{BLQ22}. 
\end{proof}

\begin{cor}
\label{Corollary: J(v)=0}
If $\CF_0,\CF$ are saturated and $\BJ(\CF_0;\CF)=0$, then $\CF=a\CF_0$ for some $a>0$. 
\end{cor}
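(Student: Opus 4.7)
The plan is to identify the scalar $a := \lambda_{\max}(\CF_0;\CF)$ and show $\CF = a\CF_0$ by first establishing the inclusion $\CF \subseteq a\CF_0$ and then invoking Lemma \ref{Lemma. saturated fa seq fb, S(fa)=S(fb) implies fa=fb}.

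First, from $\BJ(\CF_0;\CF) = 0$ we obtain $a = \lambda_{\max}(\CF_0;\CF) = S(\CF_0;\CF)$. Using the scaling relations $\lambda_{\max}(\CF_0; a\CF_0) = a$ and $S(\CF_0; a\CF_0) = a \cdot S(\CF_0;\CF_0) = a$, we see that $a\CF_0$ has the same $\lambda_{\max}$ and $S$ as $\CF$.

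Next I would prove $\CF \subseteq a\CF_0$ directly from the definition of $\lambda_{\max}$. Given any $\mu > 0$ and $f \in \CF^\mu$, the multiplicativity $\CF^\mu \cdot \CF^\mu \subseteq \CF^{2\mu}$ etc. yields $f^N \in \CF^{N\mu}$ for every $N \in \IN$. Fix $\epsilon > 0$; by the characterization
\[
\lambda_{\max}(\CF_0;\CF) = \inf\{\lambda > 0 \mid \CF^{\lambda m} \subseteq \CF_0^m, \forall m \in \IN\},
\]
we have $\CF^{(a+\epsilon)m} \subseteq \CF_0^m$ for every $m \in \IN$. Taking $m = \lfloor N\mu/(a+\epsilon) \rfloor$, we get $f^N \in \CF^{(a+\epsilon)m} \subseteq \CF_0^m$, so $\ord_{\CF_0}(f) \geq m/N \to \mu/(a+\epsilon)$ as $N \to \infty$. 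Letting $\epsilon \to 0$ gives $\ord_{\CF_0}(f) \geq \mu/a$, i.e., $f \in \CF_0^{\mu/a} = (a\CF_0)^\mu$, as desired.

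Finally, $a\CF_0$ is saturated: using $v(a\CF_0) = v(\CF_0)/a$, one checks $\widetilde{(a\CF_0)}^\lambda = \CF_0^{\lambda/a} = (a\CF_0)^\lambda$ directly from the saturation of $\CF_0$. With $\CF \subseteq a\CF_0$, $S(\CF_0;\CF) = S(\CF_0; a\CF_0)$, and both $\CF$ and $a\CF_0$ saturated, Lemma \ref{Lemma. saturated fa seq fb, S(fa)=S(fb) implies fa=fb} yields $\CF = a\CF_0$.

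The only mildly delicate point is the passage from the integer-indexed inclusion $\CF^{(a+\epsilon)m} \subseteq \CF_0^m$ to the real-indexed inclusion $\CF^\mu \subseteq \CF_0^{\mu/a}$, which is handled cleanly by the power trick above. Everything else is formal bookkeeping with the scaling properties of $S$ and $\lambda_{\max}$ and a single application of the preceding lemma.
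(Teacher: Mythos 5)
Your approach matches the paper's: you identify $a = \lambda_\max(\CF_0;\CF)$, prove $\CF \subseteq a\CF_0$, and invoke Lemma \ref{Lemma. saturated fa seq fb, S(fa)=S(fb) implies fa=fb}. The paper's own proof does the same after rescaling $\CF$ so that $a=1$, and is even terser---it simply asserts $\CF \subseteq \CF_0$ without spelling out the argument you give.

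There is, however, one step in your write-up that is not a general fact about filtrations and must be justified: ``$f^N \in \CF_0^m$, so $\ord_{\CF_0}(f) \ge m/N$.'' By submultiplicativity of a filtration one only gets $\ord_{\CF_0}(f^N) \ge N\,\ord_{\CF_0}(f)$, which is the \emph{wrong} direction; one cannot in general bound $\ord_{\CF_0}(f)$ from below by $\ord_{\CF_0}(f^N)/N$. (For instance, with $\CF_0^\lambda = (x,y^2)^{\lceil\lambda\rceil}$ in $\Ik[x,y]$ one has $\ord_{\CF_0}(y^2)\ge 1$ but $\ord_{\CF_0}(y)=0$.) The step is rescued precisely by the hypothesis that $\CF_0$ is saturated: then $\CF_0^\lambda = \{g : v(g) \ge \lambda\, v(\CF_0),\ \forall v \in \Val^*_{X,x}\}$, so $f^N \in \CF_0^m$ means $N v(f) \ge m\, v(\CF_0)$ for all $v$, hence $v(f) \ge (m/N)\, v(\CF_0)$, hence $f \in \CF_0^{m/N}$. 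You currently invoke saturation of $\CF_0$ only at the very end, to apply the Lemma; it must also be invoked here, or else the power trick does not close. With that point made explicit, the proof is complete and is essentially identical to the paper's.
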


\begin{proof}
Rescale $\CF$ such that $\lam_{\max}(\CF_0;\CF)=1$. 
Then $\CF\seq \CF_0$ and 
\begin{eqnarray*}
0=\BJ(\CF_0;\CF)=1-S(\CF_0;\CF)=S(\CF_0;\CF_0)-S(\CF_0;\CF). 
\end{eqnarray*} 
Hence $\CF=\CF_0$ by Lemma \ref{Lemma. saturated fa seq fb, S(fa)=S(fb) implies fa=fb}. 
\end{proof}

Let $x\in(X,\D)$ be a klt singularity. For any effective $\IQ$-Cartier $\IQ$-divisor $D$ on $X$ such that $x\in \Supp(D)$, we have the following filtration determined by $D$: 
$$\CF_D^\lam = \{s\in R\mid \div(s)\ge \lam \cdot D\}. $$
Choose $r\in\IN$ such that $rD$ is Cartier. Then $rD=\div(f)$ for some $f\in R$, and $\CF_D^{mr}=f^m\cdot R$ is just the principle ideal generated by $f^m$. 

\begin{lem}
\label{Lemma. S-invariant of effective divisor}
Let $x\in(X,\D)$ be a klt singularity. For any $v\in\Val_{X,x}^*$ and any effective $\IQ$-Cartier $\IQ$-divisor $D$ on $X$, we have $S(v;\CF_D) = \frac{1}{n\cdot v(D)}$. 
\end{lem}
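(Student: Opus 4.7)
The plan is to reduce to the case where $D$ is principal Cartier, compute $\tS_m(\CF_v;\CF_D)$ exactly, and extract the leading asymptotic against $\tS_m(\CF_v;\CF_v)$. First, I choose $r\in\IZ_{\ge 1}$ with $rD=\div(f)$ Cartier. From $\CF_{rD}^\lam=\CF_D^{r\lam}$ I get $\CF_{rD}=(1/r)\CF_D$, so by the rescaling identity $S(\CF_v;(1/r)\CF_D)=(1/r)\,S(\CF_v;\CF_D)$ together with $v(rD)=r\,v(D)$, both sides of the claimed equality transform identically under $D\mapsto rD$. Hence I may assume $D=\div(f)$ is Cartier and set $b:=v(f)=v(D)>0$ (positive because $x\in\Supp(D)$). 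The underlying $\IN$-filtration is $\CF_{D,\IN}^k=f^k R$, and by the discussion preceding \eqref{Eqnarray. tS/m^(n+1) limit = n vol } the $S$-invariant is unchanged upon passing to it, so it suffices to work with this $\IN$-filtration.

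Next, I apply the formula $\tS_m(\CF_v;\CF_D)=\sum_{k\ge 1}\ell\bigl(\CF_D^k(R/\CF_v^m)\bigr)=\sum_{k\ge 1}\ell\bigl(f^k R/(f^k R\cap\CF_v^m)\bigr)$. Since $X$ is normal and $f$ is a nonzerodivisor, multiplication by $f^k$ is an $R$-module isomorphism $R\xrightarrow{\cdot f^k} f^k R$; moreover $sf^k\in\CF_v^m$ iff $v(s)\ge m-kb$, which gives $f^k R\cap\CF_v^m=f^k\,\CF_v^{\max(m-kb,\,0)}$. Therefore
\begin{eqnarray*}
\tS_m(\CF_v;\CF_D)=\sum_{k=1}^{\lfloor(m-1)/b\rfloor}\ell(R/\CF_v^{m-kb}).
\end{eqnarray*}

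Finally, I insert the volume asymptotic $\ell(R/\CF_v^j)=\vol(v)\,j^n/n!+O(j^{n-1})$ (valid since $\CF_v$ is linearly bounded for $v\in\Val_{X,x}^*$) and do a Riemann-sum estimate with step $b$ to obtain
\begin{eqnarray*}
\tS_m(\CF_v;\CF_D)=\frac{\vol(v)}{b\,(n+1)!}\,m^{n+1}+O(m^n).
\end{eqnarray*}
Combined with $\tS_m(\CF_v;\CF_v)\sim n\,\vol(v)\,m^{n+1}/(n+1)!$ from \eqref{Eqnarray. tS/m^(n+1) limit = n vol } and the standard identity $\mult(\CF_v)=\vol(v)$, dividing the two asymptotics yields $S(\CF_v;\CF_D)=1/(nb)=1/(n\,v(D))$, as claimed. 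There is no real conceptual obstacle; the only items requiring care are the identification $f^k R\cap\CF_v^m=f^k\,\CF_v^{m-kb}$ (which uses only that $f$ is a nonzerodivisor and $v$ is a valuation) and controlling the boundary term in the Riemann sum for $k$ near $m/b$, both routine.
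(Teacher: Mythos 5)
Your proof is correct and follows essentially the same route as the paper: the key point in both is the isomorphism $R/\CF_v^{m-kv(f)}\xrightarrow{\sim}\CF_D^k(R/\CF_v^m)$ induced by multiplication by $f^k$, which turns $\tS_m(\CF_v;\CF_D)$ into a sum of colengths $\ell(R/\CF_v^{m-kv(f)})$. The paper first rescales $v$ to $v'=v/v(f)$ so that $v'(f)=1$ (making the sum $\sum_{j=1}^m\ell(R/\CF_{v'}^j)$) and then extracts the limit via Stolz' theorem, whereas you keep $v$ unscaled and do a Riemann-sum estimate; these are equivalent ways of reading off the same leading asymptotic.
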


\begin{proof}
The proof follows from \cite[Proof of Lemma 3.3]{XZ22}. Let $\CF_f = \CF_{rD} = r^{-1}\CF_D$ be the filtration generated by $f$. Multiplying by $f^j$ induces an isomorphism $R/\CF_v^{m-j\cdot v(f)} \to \CF_f^j(R/\CF_v^m)$. Let $v'=\frac{v}{v(f)}$.  Then we have 
\begin{eqnarray*}
\tS_{m}(v'; \CF_f)
= \sum_{j=1}^{\lam_\max^{(m)}}\ell(\CF_f^j(R/\CF_{v'}^{m})) 
= \sum_{j=1}^{m}\ell( R/\CF_{v'}^{m-j}) = \sum_{j=1}^{m}\ell( R/\CF_{v'}^{j}). 
\end{eqnarray*} 
Hence by Stolz' theorem, we have 
\begin{eqnarray*}
\mathop{\lim}_{m\to \infty} \frac{\tS_{m}(v';\CF_f)}{m^{n+1}/(n+1)!} 
= \mathop{\lim}_{m\to \infty} \frac{\ell(R/\CF_{v'}^m)}{m^{n}/n!} 
=\vol(v'). 
\end{eqnarray*} 
On the other hand, by (\ref{Eqnarray. tS/m^(n+1) limit = n vol }), we have 
\begin{eqnarray*}
\mathop{\lim}_{m\to \infty} \frac{\tS_{m}(v';v')}{m^{n+1}/(n+1)!} 
=n\cdot \vol(v'). 
\end{eqnarray*} 
We conclude that $S(v';\CF_f) = \frac{1}{n}$, hence $S(v;\CF_D)=\frac{r}{v(f)}S(v';\CF_f)=\frac{1}{n\cdot v(D)}$. 
\end{proof}

\subsection{Test configurations}

Following \cite[Definition 2.14]{LWX18}, we define the test configurations of a klt singularity $x\in(X,\D)$. 
We remark that the definition of test configurations (and the corresponding filtrations) of a log Fano cone $(X,\D,\xi_0)$ in \cite[Definition 2.14]{LWX18} is independent of the choice of $\xi_0\in\Reeb$. But the definition of generalized Futaki invariants depend on $\xi_0\in\Reeb$. 

\begin{defi}[Test configurations] \rm 
Let \(x\in (X, \D)\) be a klt singularity with a good \(\IT\)-action. A {\it \(\IT\)-equivariant $\IQ$-Gorenstein normal test configuration} (or simply called a normal test configuration) of \((X, \D)\) is a triple \((\mathcal{X}, \D_\CX; \eta)\) with a map \(\pi : (\mathcal{X}, \D_\CX) \to \IA^1_t\) satisfying the following conditions:  

\begin{enumerate}
    \item \(\mathcal{X}=\Spec(\CR)\) is an affine normal variety, \(\pi : \mathcal{X} \to \IA^1\) is a flat family, and \(\D_\CX\) is a $\IQ$-divisor on \(\mathcal{X}\) with \(\operatorname{Supp}(\D_\CX)\) not containing any component of a fiber of \(\pi\) such that $K_\CX+\D_\CX$ is $\IQ$-Cartier;  
    \item \(\eta\) is an algebraic holomorphic vector field that generates a \(\IG_m\)-action on \((\mathcal{X}, \D_\CX)\) such that \(\pi\) is \(\IG_m\)-equivariant and \(\pi_\ast \eta = -t\partial_t\). As a consequence, there is an isomorphism \(\phi : (\mathcal{X}, \D_\CX) \times_{\IA^1\setminus \{0\}} (\IA^1\setminus \{0\}) \cong (X, \D) \times (\IA^1\setminus \{0\})\);  
    \item The torus \(\IT\) acts on \((\mathcal{X}, \D)\) fiberwise and commutes with the \(\IG_m\)-action generated by \(\eta\), and coincides with the action on the first factor when restricted to \((\mathcal{X}, \D_\CX) \times_{\IA^1\setminus \{0\}} (\IA^1\setminus \{0\}) \cong (X, \D) \times (\IA^1\setminus \{0\})\).
\end{enumerate}
\end{defi}

A normal test configuration is called {\it (weakly) special} if moreover $(\CX,\D_\CX+\CX_0)$ is plt (lc). A test configuration $(\CX, \D_\CX;\eta)$ is of {\it product type} if there is a \( \IT \)-equivariant isomorphism \( (\mathcal{X}, \D_\CX) \cong (X, \D) \times \IA^1 \) and \( \eta = \eta_0 - t\partial_t \) where \( \eta_0 \) is a coweight vector of \( \IT \) and \( t\partial_t \) is the canonical lifting of \( t\partial_t \) on \( \IA^1 \) through the second projection. In this case, we will denote \( (\mathcal{X}, \D_\CX; \eta) \) by  
\[
(X \times \IA^1, \D \times \IA^1; \eta) =: (X_{\IA^1}, \D_{\IA^1}; \eta).
\]

\begin{rmk}\rm
The one-parameter subgroup $\eta_t$ moves points in $\CX\setminus \CX_0$ away from $\CX_0$ as $t\to 0$. For any $\xi_0$ in the Reeb cone of $R$, we see that $a\xi_0+\eta$ lies in the Reeb cone of $\CR_0$ with respect to the $(\IT\times \IG_m)$-action when $a\gg0$. 
\end{rmk}


For any test configurations $(\CX, \D_\CX;\eta)$, we have the following $\IN$-filtration $\CF=\CF_{(\CX, \D_\CX;\eta)}$:
\begin{eqnarray*}
\CF^\lam = \{ f\in R \mid t^{-\lam} \bar{f} \in \CR \}, 
\end{eqnarray*} 
where $\bar{f} \in R[t,t^{-1}]$ is the $\IG_m$-invariant rational section of $\CO_\CX$ determined by $f\in R$. By Rees construction \cite[Lemma 2.17]{LWX18}, we have the following canonical isomorphism of $\Ik[t]$-algebra:
\begin{eqnarray*}
\CR \cong \bigoplus_{\lam\in\IZ} t^{-\lam}\cdot\CF^\lam. 
\end{eqnarray*} 
Moreover, by \cite[Lemma 2.21]{LWX18}, we have the following characterization of the filtration $\CF$. Since $\CR$ is finitely generated, so is $\CF$. There exists $m\in \IZ_{>0}$ such that $\CF^{ml}=(\CF^m)^l$ (as ideals) for any $l\in\IN$. Let $\mu: Y\to X$ be the normalized blowup of $\CF^m\seq R$. Then there exists an effective anti-ample Cartier divisor $mE$ on $Y$ such that $\CF^m\cdot\CO_Y = \CO_Y(-mE)$. We denote by $E=\sum_j b_j E_j$ for some prime divisors $E_j$ and $b_j\in\IQ_{>0}$. Then 
\begin{eqnarray*}
\CF^{ml} 
\,\,\,=\,\,\, \mu_* \CO_Y(-mlE) 
&=& \{f\in R \mid \mu^*\div(f)\ge mlE\} \\
&=& \{f\in R \mid \ord_{E_j}(f)\ge mlb_j,  \forall j\}
\,\,\,=\,\,\, \bigcap_{j} \fa_{ml}(v_{E_j})
\end{eqnarray*} 
for any $l\in \IN$, where $v_{E_j} = b_j^{-1}\ord_{E_j}$. Hence $\CF$ is saturated by \cite[Lemma 3.19]{BLQ22}. 


\subsection{Log Fano cone singularities}

\begin{defi}[Good actions]\rm 
Let $(X,\D)$ be an affine normal pair of dimension $n$, and $\IT=\IG_m^r$. A $\IT$-action on $(X,\D)$ is called {\it good} if it is effective and there is a closed $\IT$-fixed point $x$ which is in the closure of any $\IT$-orbit. We will simply say that the singularity $x\in(X,\D)$ admits a good $\IT$-action. 
\end{defi}

Denote by $X = \Spec(R)$ for some $\Ik$-algebra $R$. Then we have weight decomposition $R=\oplus_{\alpha \in M} R_\alpha$ by the $\IT$-action, where $M=\Hom(\IT, \IG_m)\cong \IZ^r$ is the weight lattice. We denote by $N=M^\vee$ the coweight lattice. Note that $R_0 = \Ik$ since the $\IT$-action is good. 
We define the Reeb cone $\Bt_\IR^+\seq N_\IR$ by 
\begin{eqnarray*}
\Bt_\IR^+:=\{\xi\in N_\IR\mid \la \alpha,\xi\ra >0,\,\, \forall \alpha, R_{\alpha}\ne 0 \}. 
\end{eqnarray*}
For any $\xi_0\in \Bt_\IR^+$, The triple $(X,\D,\xi_0)$ is called a {\it log Fano cone singularity} polarized by $\xi_0$. 

The vectors in the Reeb cone give some special valuations over $x\in X$.  
For any $\xi\in \Bt_\IR^+$, we define the {\it toric valuation} $\wt_\xi$ by setting $\wt_\xi(f) = \la\alpha,\xi\ra$ for any $f\in R_\alpha$. 
It's clear that $\wt_\xi \in \Val^\IT_{X,x}$ if and only if $\xi\in \Bt_\IR^+$. It's clear that $\wt_\xi, \xi\in\Reeb$ are special valuations since they induce product type (multi-step) special degenerations (\cite[Theorem 4.1]{XZ22}). 

For any valuation $v: K(X)^\times\to \IR$ and any $\xi\in N_{\IR}$, we define the {\it $\xi$-twist} $v_\xi$ of $v$ by setting $v_\xi(f)=v(f)+\la\alpha,\xi\ra$ for any $f\in R_\alpha$. 

\begin{lem}\cite[Lemma 2.7]{LW24}
\label{Lemma. A(v_xi)=A(v)+A(xi)}
The function $A(\xi) := A_{X,\D}(\wt_\xi)$ is linear on $\Bt_\IR^+$. For any $\xi\in \Bt_\IR^+$ and $v\in \Val^{\IT}_X$, we have 
$A_{X,\D}(v_\xi) = A_{X,\D}(v) + A(\xi). $
\end{lem}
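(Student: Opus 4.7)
My approach is to reduce both parts of the lemma to the linear formula for log discrepancies of quasi-monomial valuations on a $\mathbb{T}$-equivariant log resolution, exploiting the fact that the $\xi$-twist translates quasi-monomial weights by linear functions of $\xi$.

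I would first fix a $\mathbb{T}$-equivariant log resolution $\mu : Y \to X$ of $(X,\Delta)$, and let $\{E_j\}$ be the $\mathbb{T}$-invariant prime divisors on $Y$ (both $\mu$-exceptional ones and components of the strict transform of $\Delta$), with log discrepancies $a_j := A_{X,\Delta}(E_j)$. At a $\mathbb{T}$-fixed closed point $y$ on a stratum $E_{j_1}\cap\cdots\cap E_{j_s}$ containing the center of $\wt_\xi$, I would choose $\mathbb{T}$-semi-invariant local parameters $z_1,\ldots,z_s$ cutting out $E_{j_i}$ locally, with $\mathbb{T}$-weights $\alpha_i \in M$. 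Unwinding the definition $\wt_\xi(f) = \langle\alpha,\xi\rangle$ for $f\in R_\alpha$ then shows that $\wt_\xi$ coincides with the quasi-monomial valuation on $Y$ at this stratum having weight $\langle\alpha_i,\xi\rangle$ along $z_i$. Applying the standard formula $A_{X,\Delta}(v) = \sum_i \lambda_i\, a_{j_i}$ for quasi-monomial valuations now yields
\begin{equation*}
A(\xi) \;=\; \sum_{i=1}^s \langle\alpha_i,\xi\rangle\,a_{j_i},
\end{equation*}
which is manifestly linear in $\xi$.

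For the twist formula I would first treat the quasi-monomial case: assume $v \in \Val^{\IT,*}_{X}$ is quasi-monomial on the same stratum of $Y$ with weights $\beta_i$ on $z_i$ (achievable after a further $\mathbb{T}$-equivariant refinement). The $\mathbb{T}$-invariance of $v$ together with the definition $v_\xi(f) := v(f) + \langle\alpha,\xi\rangle$ for $f\in R_\alpha$ gives $v_\xi(z_i) = \beta_i + \langle\alpha_i,\xi\rangle$, so $v_\xi$ is again quasi-monomial on the same stratum with translated weights, and linearity of the log discrepancy in these weights produces
\begin{equation*}
A_{X,\Delta}(v_\xi) \;=\; \sum_i (\beta_i + \langle\alpha_i,\xi\rangle)\,a_{j_i} \;=\; A_{X,\Delta}(v) + A(\xi).
\end{equation*}
For general $v \in \Val^{\IT}_{X}$ (with $A_{X,\Delta}(v)<\infty$), I would invoke a $\mathbb{T}$-equivariant version of the Jonsson-Musta\c{t}\u{a} approximation \cite{JM12}, producing $\mathbb{T}$-invariant quasi-monomial valuations $v_m \to v$ with $A_{X,\Delta}(v_m) \to A_{X,\Delta}(v)$, and then pass to the limit using that $(v_m)_\xi \to v_\xi$ and that the $\xi$-twist commutes with the approximation.

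The main obstacle lies in two verifications. First, one must check that $v_\xi$ really is a valuation on the full ring $R$: the natural extension $v_\xi(f) := \min_\alpha\bigl(v(f_\alpha) + \langle\alpha,\xi\rangle\bigr)$ for $f = \sum_\alpha f_\alpha$ satisfies the non-Archimedean triangle inequality precisely because the $\mathbb{T}$-invariance of $v$ makes the weight decomposition compatible with $v$. Second, one must ensure that $\wt_\xi$ is genuinely realized as a quasi-monomial valuation on a suitable equivariant model with the asserted weights, which requires a careful choice of the stratum containing the center of $\wt_\xi$ and, if needed, a further $\mathbb{T}$-equivariant blow-up so that $v$ and $\wt_\xi$ are simultaneously quasi-monomial. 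Once these two technical points are in place, everything reduces transparently to the linear dependence of the quasi-monomial weights on $\xi$.
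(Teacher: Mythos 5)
The paper cites this lemma from \cite[Lemma 2.7]{LW24} and does not reprove it, so there is no internal proof to compare against. Evaluating your proposal on its own terms: the skeleton is right --- reduce to the linear formula for log discrepancies on a quasi-monomial cone of a suitable $\IT$-equivariant model --- but the steps you park as ``technical points'' are exactly where the content lives, and as written the argument does not close them.

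The central gap is the assertion that on a $\IT$-equivariant log resolution $(Y,E)$, $\wt_\xi$ is quasi-monomial at some stratum, with monomial weights $\langle\alpha_i,\xi\rangle$ on semi-invariant parameters $z_i$. This is not automatic: the center of $\wt_\xi$ on $Y$ need not be a stratum of $(Y,E)$, and even when it is, $\wt_\xi$ need not be monomial with respect to $E$ at that stratum (the $\IT$-action along the stratum can produce semi-invariant units of nonzero weight that spoil the monomial description). One has to arrange for a $\IT$-equivariantly toroidal model so that $\wt_\xi\in\QM_\eta(Y,E)$; the paper's own route in Section~\ref{Section: Preliminaries} is to note that each $\wt_\xi$ is a Koll\'ar (hence quasi-monomial) valuation because it induces a product-type special degeneration, citing \cite[Theorem 4.1]{XZ22} --- a nontrivial ingredient that your argument neither reproves nor invokes. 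A second, related gap: even granting quasi-monomiality at a stratum for each fixed $\xi$, the formula $A(\xi)=\sum_i\langle\alpha_i,\xi\rangle a_{j_i}$ is a priori valid only on the sub-cone of those $\xi$ realized at that stratum on that model, so you obtain piecewise linearity. Global linearity of $A$ on $\Reeb$ requires a single $\IT$-toroidal model and stratum whose $\QM$-cone contains the image of all of $\Reeb$, which you do not establish. Finally, the limiting step for general $v$ needs the $\xi$-twist to commute with a $\IT$-equivariant Jonsson--Musta\c{t}\u{a} retraction, and needs an inequality beyond the lower-semicontinuity of $A$ (which by itself gives only $A(v_\xi)\le A(v)+A(\xi)$ in your setup); this compatibility is plausible but unverified. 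These gaps are likely repairable by importing the $T$-variety/toroidal machinery explicitly, but the proposal as it stands does not constitute a complete proof.
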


\begin{lem}
\label{Lemma. twist valuations to vertex}
For any $\xi\in \Bt_\IR^+$ and $v\in \Val^{\IT}_X$, we have $v_{\xi} \in \Val^{\IT}_{X,x}$. 
\end{lem}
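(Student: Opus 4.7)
The plan is to verify directly from definitions that $v_\xi$ is nonnegative on $R$ and strictly positive on the maximal ideal $\fm_x$. The fact that $v_\xi$ is a $\IT$-invariant valuation on the function field $K(X)^\times$ is a formal consequence of the $\IT$-invariance of $v$: for any $f=\sum_\alpha f_\alpha$ with $f_\alpha\in R_\alpha$ and only finitely many $f_\alpha\ne 0$, one has $v(f)=\min_\alpha v(f_\alpha)$, so the rule $v_\xi(f):=\min_\alpha\bigl(v(f_\alpha)+\langle\alpha,\xi\rangle\bigr)$ gives a well-defined valuation (this is a standard check on sums and products of weight-homogeneous elements). So the whole content of the lemma is that $v_\xi$ has center $x$.

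The first step is to identify $\fm_x$ intrinsically. Because the $\IT$-action on $(X,\D)$ is good, we have $R_0=\Ik$, and the vertex $x$ is the unique $\IT$-fixed closed point lying in the closure of every $\IT$-orbit. Consequently the maximal ideal at $x$ is precisely
\[
\fm_x=\bigoplus_{\alpha\neq 0}R_\alpha.
\]
This is the concrete description that will let us exploit the Reeb-cone positivity.

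Next I would check $v_\xi\ge 0$ on $R$. For $f=\sum_\alpha f_\alpha\in R$ we have $v(f_\alpha)\ge 0$ because $v\in\Val^{\IT}_X$, and $\langle\alpha,\xi\rangle\ge 0$ for every $\alpha$ with $R_\alpha\ne 0$: indeed $\xi\in\Reeb$ forces $\langle\alpha,\xi\rangle>0$ whenever $\alpha\ne 0$ and $R_\alpha\ne 0$, while $\langle 0,\xi\rangle=0$. Hence each term in the minimum defining $v_\xi(f)$ is nonnegative, so $v_\xi(f)\ge 0$.

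For the final step, I would show $v_\xi(f)>0$ for every $f\in\fm_x$. Using the description of $\fm_x$ above, the component $f_0$ vanishes, so every $\alpha$ appearing with $f_\alpha\ne 0$ is nonzero and therefore satisfies $\langle\alpha,\xi\rangle>0$. Since only finitely many weights $\alpha$ occur in the expansion of $f$, the minimum
\[
v_\xi(f)=\min_{\alpha:\,f_\alpha\ne 0}\bigl(v(f_\alpha)+\langle\alpha,\xi\rangle\bigr)\ge\min_{\alpha:\,f_\alpha\ne 0}\langle\alpha,\xi\rangle>0
\]
is strictly positive. Combined with the previous step this means $\{f\in R:v_\xi(f)>0\}\supseteq\fm_x$, and since $v_\xi\ge 0$ on $R$ defines the center of $v_\xi$ as a prime ideal containing $\fm_x$, that prime must equal $\fm_x$; hence $v_\xi$ is centered at $x$, i.e.\ $v_\xi\in\Val^{\IT}_{X,x}$. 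There is no genuine obstacle here beyond recognizing the correct description of $\fm_x$ and applying the defining inequality of the Reeb cone.
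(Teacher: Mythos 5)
Your proof is correct and follows essentially the same approach as the paper: identify $\fm_x=\oplus_{\alpha\ne 0}R_\alpha$ from goodness of the $\IT$-action and exploit $\langle\alpha,\xi\rangle>0$ for $\alpha\ne 0$ with $R_\alpha\ne 0$. The only organizational difference is in passing from homogeneous to general elements: you invoke the minimum formula $v_\xi(f)=\min_\alpha\bigl(v(f_\alpha)+\langle\alpha,\xi\rangle\bigr)$ directly, while the paper picks finitely many homogeneous generators $f_1,\dots,f_N$ of $\fm_x$ and uses the valuation axioms to reduce to $\min_i v_\xi(f_i)>0$; both routes implicitly rely on $v_\xi\ge 0$ on $R$, which you make explicit.
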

\begin{proof}
Since the $\IT$-action is good, we have $\fm_x=\oplus_{\alpha\ne 0}R_\alpha$. For any $0\ne f\in R_\alpha$, we have $v_{\xi}(f) =v(f) +\la\alpha,\xi\ra > 0$. Assume that $\fm_x$ is generated by $f_1,\cdots,f_N$, where $f_i\in R_{\alpha_i}$. Then any $f\in \fm_x$ is of the form $f=\sum_i g_i f_i$ for some $g_i\in R$. Hence $v_{\xi}(f)\ge \min_i\{v_{\xi}(g_i)+v_{\xi}(f_i) \} \ge \min_i\{v_{\xi}(f_i) \}$. We see that $v_{\xi}(\fm_x) = \min_i\{v_{\xi}(f_i) \} >0$. 
\end{proof}

\begin{rmk}\rm
\label{Remark. simplify notations CF_triv,xi0 = CF_wt_xi0}
For simplicity of notations, we will use, for example,  $\vol(\xi_0), \lam_{\max}(\xi_0;\CF)$ and $S(\xi_0;\xi)$ instead of $\vol(\wt_{\xi_0}), \lam_\max(\CF_{\wt_{\xi_0}};\CF)$ and $S(\CF_{\wt_{\xi_0}};\CF_{\wt_\xi})$ respectively. 
\end{rmk}




Let $\CF$ be a $\IT$-invariant filtration on $R$, which means that $\CF^\lam = \oplus_{\alpha\in M} \CF^\lam R_\alpha$ (where $\CF^\lam R_\alpha := \CF^\lam\cap R_\alpha$). 
For any $\xi\in \Reeb$, the $\xi$-{\it twist} $\CF_\xi$ of $\CF$ is defined by 
\begin{eqnarray*}
\CF_{\xi}^\lam := \bigoplus_{\alpha\in M} \CF^{\lam-\la\alpha,\xi\ra} R_{\alpha}, \quad \lam\in \IR_{\ge0}.  
\end{eqnarray*}
Note that $\CF_{\xi}^\lam$ is an ideal since $(\CF^{\lam-\la\alpha,\xi\ra} R_{\alpha}) \cdot R_\beta \seq \CF^{\lam-\la\alpha+\beta,\xi\ra} R_{\alpha+\beta}$ for any $\alpha,\beta \in M$, and $\CF^\lam\seq \CF_\xi^\lam$. Hence $\CF_\xi$ is an ($\fm_x$-)filtration. 

Let $\CG$ be another $\IT$-invariant filtration. Then for any $\xi\in\Reeb$ we have $\CF_\xi\seq \CG_\xi$ if and only if $\CF\seq \CG$. Indeed, the `if' part is clear since $\CF^{\lam-\la\alpha,\xi\ra} R_{\alpha} \seq \CG^{\lam-\la\alpha,\xi\ra} R_{\alpha}$ for any $\lam\in\IR_{\ge0}$ and $\alpha\in M$. For the `only if' part, the inclusion $\CF^\lam R_\alpha \seq \CG^\lam R_\alpha$ follows from $\CF^{\lam'-\la\alpha,\xi\ra} R_{\alpha} \seq \CG^{\lam'-\la\alpha,\xi\ra} R_{\alpha}$ where $\lam'=\lam+\la\alpha,\xi\ra$. 


\begin{lem}
\label{Lemma: twist of valuations and filtrations}
For any $v\in \Val^\IT_{X,x}$ and $\xi \in \Bt_\IR^+$, we have $\CF_{v,\xi} = \CF_{v_\xi}$. 
\end{lem}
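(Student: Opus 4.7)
The identity is really a definitional unwinding once both sides are recognized as $\IT$-invariant filtrations, so the plan is to compare them weight-space by weight-space.

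First I would note that since $v\in\Val_{X,x}^{\IT}$, each ideal $\CF_v^\lambda$ is $\IT$-stable and therefore decomposes as $\CF_v^\lambda=\bigoplus_{\alpha\in M} \CF_v^\lambda R_\alpha$. By the very definition of the $\xi$-twist of a $\IT$-invariant filtration,
\[
\CF_{v,\xi}^\lambda\cap R_\alpha \;=\; \CF_v^{\lambda-\la\alpha,\xi\ra}R_\alpha \;=\; \bigl\{f\in R_\alpha\,\bigm|\,v(f)\ge \lambda-\la\alpha,\xi\ra\bigr\}.
\]
On the other hand, $v_\xi$ is still $\IT$-invariant, since its value on a homogeneous element depends only on the degree and on $v$; invoking the standard fact that a $\IT$-invariant valuation $w$ satisfies $w(\sum_\alpha f_\alpha)=\min_\alpha w(f_\alpha)$ for any weight decomposition, the ideal $\CF_{v_\xi}^\lambda$ is $\IT$-stable with
\[
\CF_{v_\xi}^\lambda\cap R_\alpha \;=\; \bigl\{f\in R_\alpha\,\bigm|\,v(f)+\la\alpha,\xi\ra\ge \lambda\bigr\}.
\]

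Second, the two $\alpha$-weight pieces just computed are visibly the same subset of $R_\alpha$, so $\CF_{v,\xi}^\lambda$ and $\CF_{v_\xi}^\lambda$ agree on every weight component. Since both filtrations are $\IT$-stable and are therefore determined by their weight pieces, this forces $\CF_{v,\xi}^\lambda=\CF_{v_\xi}^\lambda$ for every $\lambda\in\IR_{\ge 0}$, which is the desired equality of filtrations.

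I do not foresee any serious obstacle; the only point requiring a brief justification is the identity $w(\sum_\alpha f_\alpha)=\min_\alpha w(f_\alpha)$ for the $\IT$-invariant valuation $w=v_\xi$. This follows from the analogous formula for $v$ (a standard property of $\IT$-invariant valuations) together with the observation that the twist only shifts each weight component by the constant $\la\alpha,\xi\ra$, so taking a minimum commutes with the shift applied weight by weight.
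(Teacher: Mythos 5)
Your argument is correct and is essentially identical to the paper's proof: both reduce the claim to an equality of $\alpha$-weight pieces and then observe that $v_\xi(s)=v(s)+\la\alpha,\xi\ra$ for $s\in R_\alpha$ makes the two weight pieces definitionally the same. The only difference is that you spell out the justification for why it suffices to check on weight components, which the paper leaves implicit.
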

\begin{proof}
It suffices to prove $\CF_{v,\xi}^\lam R_\alpha = \CF^\lam_{v_\xi} R_\alpha$ for any $\lam\in\IR$ and $\alpha\in M$. Since $v_\xi(s)=v(s)+\la\alpha,\xi\ra$ for any $s\in R_\alpha$. We conclude that 
\begin{eqnarray*}
s \in \CF_{v,\xi}^\lam R_\alpha = \CF_{v}^{\lam-\la\alpha, \xi\ra} R_\alpha
\Leftrightarrow 
v(s)\ge \lam-\la\alpha,\xi\ra 
\Leftrightarrow 
v_\xi(s) \ge \lam
\Leftrightarrow
s \in \CF^\lam_{v_\xi} R_\alpha. 
\end{eqnarray*}
\end{proof}

\begin{lem}
For any $\IT$-invariant filtration $\CF$ and $\xi\in\Bt_\IR^+$, we have
$\lct(\CF_\xi) = \lct(\CF)+A(\xi)$. 
\end{lem}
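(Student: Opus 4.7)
The plan is to prove both inequalities $\lct(\CF_\xi) \le \lct(\CF) + A(\xi)$ and $\lct(\CF_\xi) \ge \lct(\CF) + A(\xi)$ by pairing the valuative characterization
$$\lct(\CG) = \inf_{v \in \Val^{\IT,*}_{X,x}} \frac{A_{X,\D}(v)}{v(\CG)}$$
(valid for any $\IT$-invariant $\fm_x$-filtration $\CG$, with the restriction to $\IT$-invariant valuations being the standard equivariance reduction) together with the twist correspondence $v \leftrightarrow v_\xi$ on valuations from Lemma~\ref{Lemma. twist valuations to vertex}, which shifts log discrepancies by $A(\xi)$ by Lemma~\ref{Lemma. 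A(v_xi)=A(v)+A(xi)}.

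For the upper bound, I would take an arbitrary $v \in \Val^{\IT,*}_{X,x}$ normalized so that $v(\CF) = 1$ and use its twist $v_\xi \in \Val^{\IT,*}_{X,x}$ as a competitor for $\lct(\CF_\xi)$. Fekete subadditivity applied to $m \mapsto v(\CF^m)$ gives $v(\CF^m) \ge m$ for all $m$. Then for any weight $\alpha$ and any $f \in \CF_\xi^m \cap R_\alpha = \CF^{m-\la\alpha,\xi\ra} \cap R_\alpha$ (with the convention $\CF^\mu = R$ for $\mu \le 0$), one finds $v(f) \ge \max(m - \la\alpha,\xi\ra,\, 0)$, so
$$v_\xi(f) = v(f) + \la\alpha,\xi\ra \ge m,$$
whence $v_\xi(\CF_\xi) \ge 1$. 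Combining with $A(v_\xi) = A(v) + A(\xi)$ and taking the infimum over $v$ yields $\lct(\CF_\xi) \le \lct(\CF) + A(\xi)$.

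For the reverse inequality, I would run the symmetric argument. Given $w \in \Val^{\IT,*}_{X,x}$ with $w(\CF_\xi) = 1$, I form the ``un-twist'' $w_{-\xi}$ by setting $w_{-\xi}(f) := w(f) - \la\alpha,\xi\ra$ on each weight space $R_\alpha$. Formally one has $A(w_{-\xi}) = A(w) - A(\xi)$, and the same Fekete-type computation applied to $(\CF_\xi)_{-\xi} = \CF$ gives $w_{-\xi}(\CF) \ge 1$. Using $w_{-\xi}$ as a competitor for $\lct(\CF)$ then produces $\lct(\CF) \le A(w) - A(\xi)$ and hence $\lct(\CF_\xi) \ge \lct(\CF) + A(\xi)$.

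The main obstacle is that $-\xi$ lies outside the Reeb cone, so $w_{-\xi}$ need not be a valid centered valuation; for example if $w = \wt_\xi$ then $w_{-\xi}$ is the zero seminorm. I would handle this by perturbing $w$ slightly into the interior of the Reeb cone, replacing it by $w + \epsilon \wt_{\xi_0}$ for some fixed $\xi_0 \in \Reeb$ and $\epsilon > 0$ small, so that the un-twist remains in $\Val^{\IT,*}_{X,x}$; applying the argument above to the perturbed valuation and sending $\epsilon \to 0^+$ by continuity of $A$ and of the valuative infimum then recovers the lower bound, completing the proof.
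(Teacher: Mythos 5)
Your overall strategy — pair the valuative description of $\lct$ with the twist correspondence $v \leftrightarrow v_\xi$, do the upper bound by twisting competitors for $\lct(\CF)$ and the lower bound by un-twisting competitors for $\lct(\CF_\xi)$ — is exactly the paper's strategy (the paper works with a minimizer $v$ rather than the infimum over all competitors, but that is a cosmetic difference). Your upper bound is fine, and your Fekete computation for the lower bound correctly shows $w_{-\xi}(\CF) \ge 1$, i.e.\ $\CF \seq \CF_{w_{-\xi}}$.

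The gap is in how you handle the validity of the un-twist $w_{-\xi}$. The perturbation you propose does not work: replacing $w$ by $w_{\epsilon\xi_0} = w + \epsilon\,\wt_{\xi_0}$ and un-twisting gives $w_{-\xi + \epsilon\xi_0}$, and there is no reason a small additive $\epsilon\xi_0$ should compensate for the (possibly large) twist by $-\xi$; if $w_{-\xi}$ genuinely failed to be centered, $\epsilon$ small would not rescue it. The worry is in fact moot. What you have already computed — $\CF \seq \CF_{w_{-\xi}}$ — is precisely what one needs: since $\CF^1$ is $\fm_x$-primary, some $\fm_x^N \seq \CF^1 \seq \CF_{w_{-\xi}}^1$, so $w_{-\xi}(\fm_x) \ge 1/N > 0$. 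Combined with $w_{-\xi}(1) = 0$ and the fact that $w_{-\xi}$ is a (possibly sign-indefinite a priori) $\IT$-invariant valuation of $K(X)$ because $v \mapsto v_\zeta$ preserves the valuation axioms for every $\zeta \in N_\IR$, this shows $w_{-\xi} \ge 0$ on $R = \Ik \oplus \fm_x$ and $w_{-\xi}$ is centered at $x$; finiteness of $A_{X,\D}(w_{-\xi}) = A_{X,\D}(w) - A(\xi)$ puts it in $\Val^{\IT,*}_{X,x}$. (In particular your pathological example $w = \wt_\xi$ never occurs as a competitor: the normalization $w(\CF_\xi) = 1$ forces $w_{-\xi}(\fm_x) > 0$, which rules out the zero seminorm.) This is exactly the observation the paper makes, and replacing your perturbation step by it closes the argument.
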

\begin{proof}
Let $v$ be a $\IT$-invariant minimizer of $\lct(\CF)$ such that $v(\CF)=1$. Then $\CF\seq \CF_v$ and 
\begin{eqnarray*}
A_{X,\D}(v)=\lct(\CF)\le\lct(\CF_v)\le A_{X,\D}(v). 
\end{eqnarray*}
Hence $\CF_\xi\seq \CF_{v,\xi} = \CF_{v_\xi}$ and $\lct(\CF_\xi)\le A_{X,\D}(v_\xi)=\lct(\CF)+A(\xi)$. 

Conversely, let $w$ be a $\IT$-invariant minimizer of $\lct(\CF_\xi)$ with $w(\CF_\xi)=1$. With the same argument as above, we have $\CF_\xi\seq \CF_w$ and $\lct(\CF_\xi)=\lct(\CF_w)=A_{X,\D}(w)$. Hence $\CF\seq \CF_{w_{-\xi}}$. Note that $w_{-\xi}$ is a valuation on $K(X)$, $w_{-\xi}(1)=0$ and $\CF^\lam\seq\CF^\lam(w_{-\xi})$ are $\fm_x$-primary. Hence $w_{-\xi}\in \Val^\IT_{X,x}$ and $(w_{-\xi})_\xi =w$. So $A_{X,\D}(w_{-\xi})+A(\xi)=A_{X,\D}(w)$. We conclude that 
$$\lct(\CF)\le \lct(w_{-\xi}) 
\le  A_{X,\D}(w_{-\xi}) 
= A_{X,\D}(w) - A(\xi)
=\lct(\CF_\xi)-A(\xi).$$
\end{proof}


\begin{lem}
\label{Lemma: S and T, twist by xi_0}
For any $\IT$-invariant linearly bounded filtration $\CF$ and $a\in \IR_{>0}$, we have 
\begin{eqnarray}
\lam_\max(\xi_0;\CF_{a\xi_0}) = \lam_\max(\xi_0;\CF) + a. 
\end{eqnarray}
\end{lem}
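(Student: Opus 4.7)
The key identity is that for a $\IT$-invariant filtration $\CF$ and homogeneous $0\ne f \in R_\alpha$, the twisted filtration satisfies
\[
\ord_{\CF_{a\xi_0}}(f) = \ord_\CF(f) + a\la\alpha,\xi_0\ra,
\]
which is immediate from $\CF_{a\xi_0}^\lam \cap R_\alpha = \CF^{\lam - a\la\alpha,\xi_0\ra} \cap R_\alpha$. My plan is to reduce both $\lam_\max(\xi_0;\CF)$ and $\lam_\max(\xi_0;\CF_{a\xi_0})$ to suprema over homogeneous elements, and then read off the equality from this identity.

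First, I use the $\IT$-invariance of $\CF$ (and hence of $\CF_{a\xi_0}$) together with the fact $\wt_{\xi_0}(f) = \min\{\la\alpha,\xi_0\ra : f_\alpha \ne 0\}$ for $f=\sum_\alpha f_\alpha$ to reduce to homogeneous witnesses: the condition $\CF^\lam \nsubseteq \CF_{\wt_{\xi_0}}^m$ is realized by some homogeneous element whenever it holds. This yields
\[
\lam_\max^{(m)}(\xi_0;\CF) = \sup\bigl\{\ord_\CF(f): 0\ne f\in R_\alpha,\ \la\alpha,\xi_0\ra < m\bigr\},
\]
and the analogue for $\CF_{a\xi_0}$ replaces $\ord_\CF(f)$ by $\ord_\CF(f)+a\la\alpha,\xi_0\ra$.

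Next, I will prove the ``slope formula''
\[
\lam_\max(\xi_0;\CF) = \sup\bigl\{\ord_\CF(f)/\la\alpha,\xi_0\ra : 0\ne f\in R_\alpha,\ \alpha\ne 0\bigr\},
\]
noting that $\la\alpha,\xi_0\ra > 0$ for weights $\alpha\ne 0$ since $\xi_0$ is in the Reeb cone. The direction ``$\le$'' is immediate from the previous display since $\ord_\CF(f)/m < \ord_\CF(f)/\la\alpha,\xi_0\ra$ when $\la\alpha,\xi_0\ra < m$. For ``$\ge$'', given $0\ne f\in R_\alpha$ I pass to $(f^k, k\alpha)$ and use the submultiplicativity $\ord_\CF(f^k)\ge k\,\ord_\CF(f)$: choosing $m_k := \lceil k\la\alpha,\xi_0\ra\rceil + 1$ gives $\lam_\max^{(m_k)}(\xi_0;\CF)/m_k \ge k\,\ord_\CF(f)/m_k \to \ord_\CF(f)/\la\alpha,\xi_0\ra$ as $k\to\infty$, while the left-hand side tends to $\lam_\max(\xi_0;\CF)$ by \eqref{Eqnarray. lam_max^(m) convergence}.

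Applying the slope formula to both $\CF$ and $\CF_{a\xi_0}$ together with the core identity immediately yields
\[
\lam_\max(\xi_0;\CF_{a\xi_0}) = \sup_{\alpha,\,f}\left[\frac{\ord_\CF(f)}{\la\alpha,\xi_0\ra} + a\right] = a + \lam_\max(\xi_0;\CF),
\]
which is the desired equality. The only mildly delicate step is the scaling argument in the slope formula, whose role is to make $\la k\alpha,\xi_0\ra/m_k$ approach $1$; the remaining manipulations of the supremum are formal, and linear boundedness of $\CF$ guarantees that all the suprema involved are finite.
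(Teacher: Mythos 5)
Your proof is correct, and its structure is cleaner than the paper's in one respect. Both arguments reduce to the weight-space decomposition $\CF^\lam=\oplus_\alpha\CF^\lam R_\alpha$ and the core twist identity $\ord_{\CF_{a\xi_0}}(f)=\ord_\CF(f)+a\la\alpha,\xi_0\ra$ for homogeneous $f\in R_\alpha$ (equivalently $\lam_\max^{(\alpha)}(\CF_{a\xi_0})=\lam_\max^{(\alpha)}(\CF)+a\la\alpha,\xi_0\ra$), and both eventually appeal to the convergence $\lam_\max^{(m)}/m\to\lam_\max$. The difference is how the two inequalities are organized. The paper proves ``$\le$'' by a direct ideal-level containment ($\CF_{a\xi_0}^{\lam m}\seq\CF_{\wt_{\xi_0}}^m$ when $\lam>\lam_\max(\xi_0;\CF)+a$) and proves ``$\ge$'' by claiming that the maximum defining $\lam_\max^{(m)}(\xi_0;\CF)$ is attained on the outermost shell $m-1\le\la\alpha,\xi_0\ra<m$, which immediately gives $\frac{1}{m}\lam_\max^{(m)}(\xi_0;\CF_{a\xi_0})\ge\frac{1}{m}\lam_\max^{(m)}(\xi_0;\CF)+a(1-\frac{1}{m})$. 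You instead first prove a self-contained slope formula $\lam_\max(\xi_0;\CF)=\sup_{\alpha\ne0,\,0\ne f\in R_\alpha}\ord_\CF(f)/\la\alpha,\xi_0\ra$ (``$\le$'' is formal, ``$\ge$'' uses $\ord_\CF(f^k)\ge k\,\ord_\CF(f)$ with $m_k=\lceil k\la\alpha,\xi_0\ra\rceil+1$), after which both directions of the lemma drop out of the slope formula applied to $\CF$ and $\CF_{a\xi_0}$. The benefit of your route is that it never needs the shell-localization claim, which in general can fail for individual $m$ (e.g.\ if no weight $\alpha$ lands in $[m-1,m)$); you replace it by a scaling argument that pushes a single homogeneous witness out to $m\to\infty$, which is more robust and does not affect the asymptotic limit. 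The cost is introducing the slope formula as an extra intermediate step, but that is a clean and reusable statement, so it is a reasonable trade.
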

\begin{proof}
Let $\CF_0=\CF_{\wt_{\xi_0}}$. For any $\lam>\lam_\max(\xi_0;\CF) + a$ and $m\in\IN$, we have $\CF^{(\lam-a)m}\seq \CF_0^m$. In other words, for any $\alpha$ with $\la\alpha,\xi_0\ra<m$, we have $\CF^{(\lam-a)m}R_\alpha=0$. Hence $\CF_{a\xi_0}^{\lam m} R_\alpha= \CF^{\lam m-a\la\alpha,\xi_0\ra}R_\alpha \seq \CF^{(\lam-a)m}R_\alpha =0$, that is, $\CF^{\lam m}_{a\xi_0} \seq \CF_0^m$. We get `$\le$'. 

Conversely, for each $\alpha \in M$, we define 
\begin{eqnarray}
\label{Eqnarray. lam_max^alpha}
\lam_\max^{(\alpha)}(\CF) := \max\{\lam>0 \mid \CF^\lam R_\alpha \ne 0\}. 
\end{eqnarray}
Then $\lam_\max^{(\alpha)}(\CF_{a\xi_0}) = \lam_\max^{(\alpha)}(\CF) + a\la\alpha,\xi_0 \ra$ and $\lam^{(m)}_\max(\xi_0;\CF) = \max\{\lam_\max^{(\alpha)}(\CF) \mid m-1 \le \la\alpha, \xi_0\ra <m \}. $ Hence 
\begin{eqnarray*}
\frac{1}{m} \lam_\max^{(m)}(\xi_0; \CF_{a\xi_0}) \ge \frac{1}{m}\lam_\max^{(m)}(\xi_0; \CF) +a(1-\frac{1}{m}). 
\end{eqnarray*}
We conclude by (\ref{Eqnarray. lam_max^(m) convergence}) and taking $m\to \infty$. 
\end{proof}

\begin{lem}
\label{Lemma. S(F_xi)=S(F)+S(xi)}
For any $\IT$-invariant linearly bounded filtration $\CF$ and $\xi\in\Reeb$, we have 
\begin{eqnarray}
S(\xi_0;\CF_\xi) = S(\xi_0;\CF) + S(\xi_0;\xi). 
\end{eqnarray}
\end{lem}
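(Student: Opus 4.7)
The plan is to establish the identity at the finite level $m\in\IN$, namely
\begin{eqnarray*}
\tS_m(\CF_0;\CF_\xi) = \tS_m(\CF_0;\CF) + \tS_m(\CF_0;\wt_\xi),
\end{eqnarray*}
where $\CF_0 = \CF_{\wt_{\xi_0}}$, and then divide by $\tS_m(\CF_0;\CF_0)$ and take $m\to\infty$, using the definition \eqref{Eqnarray. Convergence of S_m} of $S(\xi_0;\cdot)$.

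To obtain the finite-level identity, I would exploit the $\IT$-invariance of both $\CF_0$ and $\CF$. Since $\CF_0^m$ is a $\IT$-stable ideal and each graded piece $\Gr_\CF^\lam(R/\CF_0^m)$ is a finite-dimensional $\IT$-representation, complete reducibility allows us to upgrade the basis from \cite[Lemma 2.1]{XZ20} to a basis $\{f_1,\ldots,f_{N_m}\}$ of $R/\CF_0^m$ consisting of $\IT$-weight vectors (say $f_i \in R_{\alpha_i}$) compatible with both $\CF_0$ and $\CF$. For any weight vector $f\in R_\alpha$, the definition $\CF_\xi^\lam R_\alpha = \CF^{\lam-\la\alpha,\xi\ra}R_\alpha$ together with $\la\alpha,\xi\ra > 0$ (from $\xi\in\Reeb$) gives
\begin{eqnarray*}
\ord_{\CF_\xi}(f) = \ord_\CF(f) + \la\alpha,\xi\ra \ge 0,
\end{eqnarray*}
so the same basis is also $\CF_\xi$-compatible. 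Summing over $i$ and noting that $\la\alpha_i,\xi\ra = \wt_\xi(f_i) = \ord_{\CF_{\wt_\xi}}(f_i)$ yields the claimed equality of $\tS_m$'s.

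If the author prefers not to invoke a compatible weight basis, the same computation can be carried out on graded pieces: the $\IT$-invariance of $\CF$ gives the direct sum decomposition
\begin{eqnarray*}
\Gr_{\CF_\xi}^\lam(R/\CF_0^m) = \bigoplus_{\alpha \in M} \Gr_\CF^{\lam-\la\alpha,\xi\ra}(R_\alpha/\CF_0^m R_\alpha),
\end{eqnarray*}
and then the formula $\tS_m(\CF_0;\CF_\xi) = \sum_\lam \lam\cdot \ell(\Gr_{\CF_\xi}^\lam(R/\CF_0^m))$ splits via the substitution $\mu = \lam - \la\alpha,\xi\ra$ into $\tS_m(\CF_0;\CF) + \sum_\alpha \la\alpha,\xi\ra\cdot \dim(R_\alpha/\CF_0^m R_\alpha)$; the last sum equals $\tS_m(\CF_0;\wt_\xi)$ since on $R_\alpha$ the valuation $\wt_\xi$ is constant equal to $\la\alpha,\xi\ra$ and $\CF_0^m R_\alpha$ is either $0$ or $R_\alpha$ depending on whether $\la\alpha,\xi_0\ra < m$.

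The main potential obstacle is making the weight-basis construction precise (or equivalently, justifying the graded decomposition above is a genuine decomposition of finite-dimensional vector spaces); both reduce to the fact that each weight space $R_\alpha$ is finite-dimensional under a good $\IT$-action and that only finitely many weights $\alpha$ satisfy $\la\alpha,\xi_0\ra < m$. Once this bookkeeping is handled, everything else is a direct calculation and passage to the limit, and no deeper input is needed.
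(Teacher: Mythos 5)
Your proof is correct and follows essentially the same route as the paper's: both decompose by $\IT$-weight, apply the shift identity $\ord_{\CF_\xi}(f) = \ord_\CF(f) + \la\alpha,\xi\ra$ on weight vectors $f \in R_\alpha$ (equivalently $\Gr_{\CF_\xi}^\lam R_\alpha = \Gr_\CF^{\lam-\la\alpha,\xi\ra}R_\alpha$), and sum. The only cosmetic difference is that you work with the cumulative sums $\tS_m$ and divide by $\tS_m(\CF_0;\CF_0)$, whereas the paper works with the discrete-derivative quantity $S'_m$ and invokes Stolz's theorem to identify the limit; both bookkeeping choices are valid and lead to the same conclusion.
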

Note that $S(\xi_0;\xi_0)=1$, we have $S(\xi_0;\CF_{a\xi_0}) = S(\xi_0;\CF)+a$ for any $a>0$. 
\begin{proof}
Let $\CF_0=\CF_{\wt_{\xi_0}}$ and $L_m = \ell(\CF_0^{m}/\CF_0^{m+1})$. We have 
\begin{eqnarray*}
S'_m(\xi_0;\CF_{\xi}) 
&=& \frac{1}{L_m} \sum_\lam \frac{\lam}{m} \sum_{m\le\la\alpha, \xi_0\ra < m+1} \ell( \Gr_{\CF_\xi}^\lam R_\alpha) 
\,\,\,=\,\,\, \frac{1}{L_m} \sum_\lam \frac{\lam}{m} \sum_{m\le\la\alpha, \xi_0\ra < m+1} \ell( \Gr_{\CF}^{\lam - \la\alpha, \xi\ra } R_\alpha) \\
&=& \frac{1}{L_m} \sum_{m\le\la\alpha, \xi_0\ra < m+1} \sum_\lam \frac{\lam + \la\alpha, \xi\ra }{m} \ell( \Gr_{\CF}^{\lam} R_\alpha) \\
&=& S'_m(\xi_0;\CF) + \frac{1}{L_m} \sum_{m\le\la\alpha, \xi_0\ra < m+1} \frac{\la\alpha, \xi\ra }{m} \ell(R_\alpha)
\,\,\,=\,\,\,S'_m(\xi_0;\CF) +S'_m(\xi_0;\xi) . 
\end{eqnarray*}
We conclude by taking $m\to \infty$. 
\end{proof}

\begin{lem}
\label{Lemma. lam_min xi_0 twist}
For any $v\in \Val_{X,x}^{\IT,*}$ and $a> -\lam_\min(\xi_0;v)$, we have $v_{a\xi_0} \in \Val_{X,x}^{\IT,*}$ and 
\begin{eqnarray}
\label{Eqnarray. lam_min xi_0 twist}
\lam_\min(\xi_0;v_{a\xi_0}) = \lam_\min(\xi_0;v) + a. 
\end{eqnarray}
\end{lem}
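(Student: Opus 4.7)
My plan is to reduce both sides of the claimed equality to a common formula in terms of the $\mathbb{T}$-weight decomposition, exploiting that $v$ is $\mathbb{T}$-invariant so its filtration is determined by the numbers $v(R_\alpha):=\inf\{v(f):0\ne f\in R_\alpha\}$.

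\textbf{Step 1: A formula for $\lam_\min(\xi_0;v)$.} I would first prove
\begin{eqnarray*}
\lam_\min(\xi_0;v) \,=\, \inf_{\alpha\in M\setminus\{0\},\, R_\alpha\ne 0} \frac{v(R_\alpha)}{\la\alpha,\xi_0\ra}.
\end{eqnarray*}
The inequality ``$\le$'' follows from $R_\alpha\subseteq \CF_{\wt_{\xi_0}}^{\la\alpha,\xi_0\ra}$, which gives $v(R_\alpha)\ge v(\CF_{\wt_{\xi_0}}^{\la\alpha,\xi_0\ra})\ge \la\alpha,\xi_0\ra\cdot\lam_\min(\xi_0;v)$ by definition of $\lam_\min$ as an infimum. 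For the reverse, since $\CF_{\wt_{\xi_0}}^m=\bigoplus_{\la\alpha,\xi_0\ra\ge m} R_\alpha$ and $v$ is $\mathbb{T}$-invariant, $v(\CF_{\wt_{\xi_0}}^m)=\inf_{\la\alpha,\xi_0\ra\ge m} v(R_\alpha)$. Choosing near-minimizers $\alpha_m$ gives $v(R_{\alpha_m})/\la\alpha_m,\xi_0\ra\le v(\CF_{\wt_{\xi_0}}^m)/m+\epsilon$, and the right-hand side tends to $\lam_\min(\xi_0;v)$ by the defining limit.

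\textbf{Step 2: Verifying $v_{a\xi_0}\in\Val_{X,x}^{\mathbb{T},*}$.} By the hypothesis $a>-\lam_\min(\xi_0;v)$, the formula of Step 1 yields $v(R_\alpha)/\la\alpha,\xi_0\ra+a>0$ for every nonzero $\alpha$ with $R_\alpha\ne 0$, so $v_{a\xi_0}(R_\alpha)=v(R_\alpha)+a\la\alpha,\xi_0\ra>0$. Since the set of $\alpha$ with $R_\alpha\ne 0$ lies in a rational polyhedral cone strictly contained in $\{\la\cdot,\xi_0\ra>0\}\cup\{0\}$, the minimum $\delta:=\min\{\la\alpha,\xi_0\ra:\alpha\ne 0,\,R_\alpha\ne 0\}$ is positive, and therefore $v_{a\xi_0}(\fm_x)\ge \delta(\lam_\min(\xi_0;v)+a)>0$. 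Thus $v_{a\xi_0}$ is centered at $x$; it is clearly $\mathbb{T}$-invariant. Finite log discrepancy follows from Lemma \ref{Lemma. A(v_xi)=A(v)+A(xi)} applied either with the Reeb vector $a\xi_0$ (if $a>0$) or with $-a\xi_0$ applied to the valuation $v_{a\xi_0}$ to recover $v$ (if $a<0$).

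\textbf{Step 3: Shifted infimum.} Since $v_{a\xi_0}$ is a $\mathbb{T}$-invariant element of $\Val_{X,x}^*$, Step 1 applies to it and gives
\begin{eqnarray*}
\lam_\min(\xi_0;v_{a\xi_0}) \,=\, \inf_{\alpha}\frac{v_{a\xi_0}(R_\alpha)}{\la\alpha,\xi_0\ra} \,=\, \inf_{\alpha}\left(\frac{v(R_\alpha)}{\la\alpha,\xi_0\ra}+a\right) \,=\, \lam_\min(\xi_0;v)+a.
\end{eqnarray*}

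The only step with any subtlety is Step 2: showing that the twist is still centered at $x$ (as opposed to being a non-negative quasi-valuation that vanishes on some homogeneous piece). This is precisely where the strict inequality $a>-\lam_\min(\xi_0;v)$ is used, together with a uniform lower bound on the smallest positive weight $\la\alpha,\xi_0\ra$. Everything else is a formal rewriting using $\mathbb{T}$-invariance and the formula from Step 1.
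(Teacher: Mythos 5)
Your proof is correct and takes essentially the same route as the paper: compute $v_{a\xi_0}(R_\alpha)=v(R_\alpha)+a\langle\alpha,\xi_0\rangle$, identify $\lam_\min(\xi_0;\cdot)$ with a weight-wise infimum, and read off the translation. The main difference is one of explicitness rather than strategy. The paper's proof is terse — it asserts $v_{a\xi_0}(\CF_{\wt_{\xi_0}})=v(\CF_{\wt_{\xi_0}})+a$ ``by restricting $1-\frac{1}{m}\le\frac{1}{m}\langle\alpha,\xi_0\rangle<1$ and letting $m\to\infty$'' and then simply \emph{recalls} that $w$ is centered at $x$ iff $\lam_\min(\xi_0;w)>0$ — whereas you isolate and prove the clean identity $\lam_\min(\xi_0;v)=\inf_\alpha v(R_\alpha)/\langle\alpha,\xi_0\rangle$ and prove the centering criterion by hand via the uniform positive lower bound $\delta$ on nonzero weights. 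You also explicitly close the gap on the finite log discrepancy (the ``$*$'' in $\Val_{X,x}^{\IT,*}$) by invoking Lemma~\ref{Lemma. A(v_xi)=A(v)+A(xi)} in both directions depending on the sign of $a$; the paper's proof leaves this implicit. One small technical remark: in your Step~1 direction ``$\le$'' you write $v(\CF_{\wt_{\xi_0}}^{\langle\alpha,\xi_0\rangle})\ge\langle\alpha,\xi_0\rangle\cdot\lam_\min(\xi_0;v)$, but for non-integer $\langle\alpha,\xi_0\rangle$ this only gives $\lfloor\langle\alpha,\xi_0\rangle\rfloor\cdot\lam_\min$ directly; the clean bound follows by replacing $\alpha$ by $k\alpha$, using $v(R_{k\alpha})\le k\,v(R_\alpha)$, and letting $k\to\infty$. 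This is the same limiting device the paper is gesturing at, and is a minor patch rather than a real gap.
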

\begin{proof}
For any $\alpha$, we have $v_{a\xi_0}(R_\alpha) = v(R_\alpha) + a\cdot \la\alpha,\xi_0\ra$. Hence $v_{a\xi_0}(\CF_{\wt_{\xi_0}}) = v(\CF_{\wt_{\xi_0}}) + a$ by restricting $1-\frac{1}{m}\le \frac{1}{m}\la\alpha,\xi_0\ra < 1$ and letting $m\to \infty$. This is just (\ref{Eqnarray. lam_min xi_0 twist}). Recall that a valuation $w$ on $X$ is supported at $x$ if and only if $\lam_\min(\xi_0;w) (=w(\CF_{\wt_{\xi_0}}) ) > 0$. Hence $v_{a\xi_0} \in \Val_{X,x}^{\IT,*}$ since $\lam_\min(\xi_0;v_{a\xi_0}) = \lam_\min(\xi_0;v) + a >0$. 
\end{proof}

\begin{defi}[Geodesics]\rm
Let $\{\CF_i\}_{1\le i\le r}$ be linearly bounded filtrations on $R$. For any $\alpha = (\alpha_1,\cdots,\alpha_r) \in \IR_{\ge 0}^r$, we define the filtration $\CF_\alpha$ by 
\begin{eqnarray}
\label{Eqnarray. Geodesic filtration}
\CF_{\alpha}^\lam &:=& 
\sum_{\alpha_1\lam_1+\cdots+\alpha_r\lam_r \ge \lam}
\CF_1^{\lam_1} \cap \cdots \cap  \CF_r^{\lam_r} \\
&=&\{f\in R \mid \alpha_1 \cdot \ord_{\CF_1}(f) +\cdots+ \alpha_r \cdot \ord_{\CF_r}(f) \ge \lam \}. 
\end{eqnarray}
The collection of filtrations $\{\CF_\alpha\}_{\alpha_1+\cdots+\alpha_r = 1}$ is called the {\it geodesic} connecting $\CF_1,\cdots,\CF_r$. We have 
\begin{eqnarray}
\label{Eqnarray. ord_F is linear along geodesic}
\ord_{\CF_\alpha} = \alpha_1 \ord_{\CF_1} + \cdots + \alpha_{r}\ord_{\CF_r}. 
\end{eqnarray}
\end{defi}

In the case $r=2$, let $\CF_0,\CF_1$ be $\IT$-invariant linearly bounded filtrations. We set $\alpha_0=1-t$ and $\alpha_1=t$ for some $0\le t\le1$, and denote the geodesic $\CF_{(\alpha_0,\alpha_1)}$ connecting $\CF_0,\CF_1$ by $\CF_t$. By \cite[Lemma 3.1]{AZ22}, there exists a $\IT$-invariant basis $\{f_i\}$ of the $\Ik$-vector space $R/\CF_{\wt_{\xi_0}}^{m} = \oplus_{\la\alpha,\xi_0\ra< m} R_\alpha$ that is compatible with both $\CF_0$ and $\CF_1$. Then we have 
\begin{eqnarray*}
\tS_m(\xi_0;\CF_t) = \sum_i \ord_{\CF_t}(f_i) = \sum_i (1-t)\ord_{\CF_0}(f_i) +t \ord_{\CF_1}(f_i) = (1-t)\tS_m(\xi_0; \CF_0) + t\tS_m(\xi_0; \CF_1). 
\end{eqnarray*}
By multiplying with $\tS_m(\xi_0;\xi_0)^{-1}$ and taking $m\to \infty$, we get
\begin{eqnarray}
\label{Eqnarray. S is linear along geodesics}
S(\xi_0;\CF_t)  = (1-t)S(\xi_0; \CF_0) + t S(\xi_0; \CF_1). 
\end{eqnarray}
In other words, $S(\xi_0;-)$ is linear along geodesics.

\begin{defi}[Quasi-monomial valuations]\rm
\label{Definition. Quasi-monomial valuations}
Let $(Y,E=E_1+\cdots+E_r)\to X$ be a log smooth model and $\alpha=(\alpha_1,\cdots, \alpha_r) \in \IR_{\ge 0}^r$, we define the valuation $v_\alpha$ by 
\begin{eqnarray}
\label{Eqnarray. quasi-monomial valuations}
v_\alpha(f) = \min\{ \sum_{1\le i\le r} \alpha_i \beta_i\mid f_\beta\ne 0 \}
\end{eqnarray}
for any $f = \sum_\beta f_\beta z^\beta \in \hat{\CO}_{X,\eta}$, where $\eta$ is the generic point of some irreducible component of $\cap_{1\le i\le r} E_i$ and $z_1,\cdots, z_r \in \hat{\CO}_{X,\eta}$ are local functions such that $E_i = (z_i=0)$. The valuation $v_\alpha$ is called a {\it quasi-monomial valuation} over $X$. We denote by 
\begin{eqnarray}
\QM_\eta(Y,E) = \{v_\alpha\in \Val_{X}\mid \alpha \in \IR_{\ge 0}^r \}. 
\end{eqnarray}

Instead of assuming that $(Y,E)$ is SNC at $\eta$, we may generalize the assumption and assume that $(Y,E)$ is toroidal at $\eta$, that is, there is an semi-local SNC pair $(\eta'\in Y', E_1'+\cdots+E_r')$ with an abelian group $G$ action such that 
\begin{eqnarray}
\label{Eqnarray. toroidal quotient}
(\eta'\in Y', E'= E_1'+\cdots+E_r')/G \cong (\eta\in Y, E= E_1+\cdots+E_r). 
\end{eqnarray}
Assume that the pull-back of $E_i\seq Y$ to $Y'$ is $n_iE_i'$ (hence $\ord_{E_i} = \frac{1}{n_i}\ord_{E_i'}|_Y$). Then for any $\alpha = (\alpha_1,\cdots,\alpha_r) \in \IR_{\ge 0}^r$, we denote by $\alpha'=(\frac{\alpha_1}{n_1},\cdots,\frac{\alpha_r}{n_r})$. We define $v_\alpha = v_{Y,\alpha} = v_{Y', \alpha'}|_{K(Y)}$ by the restriction of $v_{Y',\alpha'}$ (defined by (\ref{Eqnarray. quasi-monomial valuations})) on $K(Y)$ (via the inclusion $K(Y)\cong K(Y')^G \seq K(Y')$). Hence 
\begin{eqnarray*}
v_{Y,\alpha}(E_i) = v_{Y',\alpha'}(n_i E_i') = n_i \cdot \frac{\alpha_i}{n_i} = \alpha_i. 
\end{eqnarray*}
We denote by $\QM_\eta(Y,E) = \{v_\alpha \in \Val_Y\mid \alpha \in \IR_{\ge 0}^r\}$. Note that we have the following isomorphism induced by the isomorphism (\ref{Eqnarray. toroidal quotient}): 
\begin{eqnarray*}
\IR_{\ge 0}^r \cong \QM_\eta(Y,E) 
&\to& \QM_{\eta'}(Y',E') \cong \IR_{\ge 0}^r \\
\alpha = (\alpha_1,\cdots,\alpha_r) 
&\mapsto& (\frac{\alpha_1}{n_1},\cdots,\frac{\alpha_r}{n_r}) = \alpha'. 
\end{eqnarray*}
\end{defi}

\begin{lem}
\label{Lemma. geodesic seq line-sigment}
Let $\CF_{i}=\CF_{\ord_{E_i}}$. Then for any $\alpha = (\alpha_1,\cdots,\alpha_r) \in \IR_{\ge 0}^r$, we have $\CF_\alpha \seq \CF_{v_\alpha}$. 
\end{lem}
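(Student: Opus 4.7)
The plan is to reduce the inclusion $\CF_\alpha \seq \CF_{v_\alpha}$ to the pointwise inequality
\[
v_\alpha(f) \ge \sum_{i=1}^r \alpha_i\cdot\ord_{E_i}(f) \qquad \text{for all } f\in R,
\]
and then to verify that inequality by examining the local expansion at $\eta$. Observe first that by the very definition $\CF_i=\CF_{\ord_{E_i}}$, one has $\ord_{\CF_i}(f)=\ord_{E_i}(f)$, so the defining condition for $f\in\CF_\alpha^\lam$ becomes $\sum_i \alpha_i\ord_{E_i}(f)\ge \lam$. Once the displayed inequality is established, one immediately gets $v_\alpha(f)\ge \lam$, i.e.\ $f\in \CF_{v_\alpha}^\lam$.

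I would first handle the SNC case, where $(Y,E)$ is log smooth at $\eta$. Pick local coordinates $z_1,\dots,z_r\in \hat{\CO}_{Y,\eta}$ with $E_i=(z_i=0)$, and expand $f=\sum_\beta f_\beta z^\beta\in \hat{\CO}_{Y,\eta}$. Then by construction $\ord_{E_i}(f)=\min\{\beta_i\mid f_\beta\ne 0\}$ and $v_\alpha(f)=\min\{\sum_i \alpha_i\beta_i\mid f_\beta\ne 0\}$. The desired inequality
\[
\min_{\beta: f_\beta\ne 0}\sum_{i}\alpha_i\beta_i \;\ge\; \sum_{i}\alpha_i\cdot \min_{\beta: f_\beta\ne 0}\beta_i
\]
is purely elementary, since each $\beta$ in the minimization on the left satisfies $\beta_i\ge \min_{\beta': f_{\beta'}\ne 0} \beta_i'$ for every $i$, and the $\alpha_i$ are non-negative.

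For the general toroidal case, I would invoke the SNC cover $(\eta'\in Y',E')/G\cong(\eta\in Y,E)$ from (\ref{Eqnarray. toroidal quotient}), pulling back $E_i$ to $n_iE_i'$. Setting $\alpha'=(\alpha_1/n_1,\dots,\alpha_r/n_r)$, by definition $v_\alpha=v_{Y',\alpha'}|_{K(Y)}$, and $\ord_{E_i'}|_Y = n_i\ord_{E_i}$. Applying the SNC case on $Y'$ to $f\in R\seq K(Y)\seq K(Y')$ yields
\[
v_\alpha(f)=v_{Y',\alpha'}(f)\ge \sum_i \alpha_i'\ord_{E_i'}(f)=\sum_i \frac{\alpha_i}{n_i}\cdot n_i\ord_{E_i}(f)=\sum_i \alpha_i\ord_{E_i}(f),
\]
as required.

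There is essentially no hard step here, only bookkeeping: the only subtlety is tracking the rescaling factors $n_i$ correctly between $Y'$ and $Y$, and confirming that the SNC formula for $v_{Y',\alpha'}$ applies to the completion along $\eta'$ while the inequality we want is intrinsic to $K(Y)$. The non-negativity of the $\alpha_i$ is essential at the SNC step; without it one cannot pass from the termwise inequality $\beta_i\ge \min \beta_i'$ to the inequality on the weighted sums.
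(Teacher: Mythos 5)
Your proof is correct and follows essentially the same route as the paper: reduce to the pointwise inequality $v_\alpha(f)\ge \sum_i\alpha_i\ord_{E_i}(f)$, then verify it by the local expansion at $\eta'$ and the elementary observation that for each $\beta$ with $f_\beta\ne 0$ one has $\beta_i\ge\min_{\beta'}\beta'_i$. The paper compresses your two steps by doing the SNC computation directly in the cover $Y'$ rather than first treating the SNC case on $Y$ and then pulling back, but the bookkeeping with the $n_i$ factors is the same.
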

\begin{proof}
For any $f\in R$, write $f = \sum_\beta f_\beta z^\beta \in \hat{\CO}_{Y',\eta'}$ as a local function at $\eta'\in Y'$. For any $\alpha = (\alpha_1,\cdots,\alpha_r) \in \IR_{\ge 0}^r$, we see that $f \in \CF_\alpha^\lam $ if and only if $\sum_i \alpha_i \beta^{(i)} \ge \lam$, where $\beta^{(i)} = \ord_{E_i}(f) = \frac{1}{n_i} \ord_{E_i'}(f) = \frac{1}{n_i}\min\{\beta_j\mid f_\beta \ne 0, \beta=(\beta_1,\cdots,\beta_r)\in \IN^r\}$. On the other hand, by definition of quasi-monomial valuations, $f\in \CF_{v_\alpha}^\lam$ if and only if $\sum_i \frac{\alpha_i}{n_i}\cdot  \beta_i \ge \lam$ for any $\beta=(\beta_1,\cdots,\beta_r)\in\IN^r$ satisfying $f_\beta\ne 0$. In particular, we have $\CF_\alpha \seq \CF_{v_\alpha}$. 
\end{proof}

\begin{cor}
\label{Corollary. S is concave on quasi-monomial cone}
Let $\sigma\seq \Val_{X,x}$ be a $\IT$-invariant quasi-monomial simplicial cone, then the function $\sigma\to \IR_{\ge 0}, \,\, v\mapsto S(\xi_0; v)$ is concave. 
\end{cor}

\begin{proof}
For any $v_0,v_1\in \sigma_\IQ$, let $\CF_0=\CF_{v_0}, \CF_1=\CF_{v_1}$. Let $v_t\in \sigma$ be the linear combination $(1-t)v_0+tv_1$ in $\sigma$, and $\CF_t$ be the geodesic connecting $\CF_0,\CF_1$. Then $\CF_t\seq \CF_{v_t}$ by Lemma \ref{Lemma. geodesic seq line-sigment}. Hence by (\ref{Eqnarray. S is linear along geodesics}), we have $S(\xi_0; \CF_{v_t}) \ge S(\xi_0; \CF_t) = (1-t)S(\xi_0; \CF_{v_0}) + t S(\xi_0; \CF_{v_1}). $
\end{proof}

We will see in Theorem \ref{Theorem. Special cone: geodesic = line sigment} that the inclusion $\CF_\alpha \seq \CF_{v_\alpha}$ in Lemma \ref{Lemma. geodesic seq line-sigment} is an equality if $E_1,\cdots,E_r \in \Val_{X,x}$ spans a special quasi-monomial cone $\sigma$. In this case, the function $S(\xi_0; -)$ is linear on $\sigma$.


\begin{lem}
\label{Lemma. lam_max is continuous on quasi-monomial cone}
Let $\sigma\seq \Val_{X,x}^*$ be a $\IT$-invariant quasi-monomial simplicial cone, then the function $\sigma\to \IR_{\ge 0}, \,\, v\mapsto \lam_\max(\xi_0; v)$ is continuous. 
\end{lem}
\begin{proof}
Since $\lam_\max(\xi_0; v)$ is homogeneous of degree one in $v$, it suffices to show that $v\mapsto \lam_\max(\xi_0; v)$ is continuous on the simplex $D=\{(x_1,\cdots,x_r)\mid x_1+\cdots + x_r = 1\} \seq \IR^{r}_{\ge 0}\cong \sigma$. 

Fix $v\in D$ and $\lam < \lam_\max(\xi_0; v)$. By definition, there exists $m\in \IN$ and $f\in \CF_v^{m\lam} \setminus \CF_{\wt_{\xi_0}}^m$. By \cite[Theorem 1]{BFJ14}, there exists a constant $A_0>0$ such that for any $g\in R$, the function $D\to \IR_{\ge 0}: w\mapsto w(g)$ is Lipschitz continuous (with respect to the Euclidean norm $|\cdot |$ on $D$) with Lipschitz constant at most $A_0\ord_x(g)$. Since $\CF_{\wt_{\xi_0}}$ is linearly bounded, there exists a constant $A>0$ such that for any $g\in R$, we have $A_0\ord_x(g) \le A \wt_{\xi_0}(g)$. Since $A \wt_{\xi_0}(f)< Am$, we have $|w(f)-v(f)| \le Am|w-v|$. Then
\begin{eqnarray*}
 w(f)\ge v(f) - Am|w-v| \ge (\lam - A|w-v|) m. 
\end{eqnarray*}
In particular, $\lam_\max(\xi_0; w) \ge \lam - A|w-v|$. Letting $\lam \to \lam_\max(\xi_0; v)$ we get 
\begin{eqnarray*}
\lam_\max(\xi_0; w)-\lam_\max(\xi_0; v) \ge - A|w-v|. 
\end{eqnarray*}
Exchanging the roles of $v$ and $w$ we conclude that 
\begin{eqnarray*}
|\lam_\max(\xi_0; w)-\lam_\max(\xi_0; v)| \le A|w-v|. 
\end{eqnarray*}
\end{proof}

\section{Polystability}
\label{Section. Polystability}

In this section, we recall various polystability notions of a log Fano cone and show that they are all equivalent.

\subsection{Definition of K/Ding-stability}

Let $(X=\Spec(R),\D,\xi_0)$ be a log Fano cone singularity with a good $\IT$-action. For any $\IT$-invariant linearly bounded filtration $\CF$, we define the {\it Ding invariant} by 
\begin{eqnarray}
\BD_{X,\D,\xi_0}(\CF) = \BD(\CF) := \lct(X,\D;\CF) - A(\xi_0)S(\xi_0;\CF). 
\end{eqnarray}

Following \cite{LWX18}, for any $\IT$-equivariant test configuration $(\CX, \D_\CX,\xi_0;\eta)$ of $(X,\D,\xi_0)$, we define the {\it generalized Futaki invariant} by 
\begin{eqnarray}
\Fut(\CX, \D_\CX,\xi_0;\eta) = \frac{D_{T_{\xi_0}(\eta)}\vol_{\CX_0}(\xi_0)}{\vol_{\CX_0}(\xi_0)}, 
\end{eqnarray}
where $T_{\xi_0}(\eta)=\frac{1}{n}(A(\xi_0)\eta - A(\eta)\xi_0)$. We also define the corresponding Ding invariant by 
\begin{eqnarray}
\BD(\CX, \D_\CX,\xi_0;\eta) &:=& \BD_{X,\D,\xi_0}(\CF_{(\CX, \D_\CX;\eta)}). 
\end{eqnarray}

\begin{defi}[K/Ding-semistability] \rm
\label{Definition. Ding-stability for filtrations}
Let $(X,\D,\xi_0)$ be a log Fano cone. It is called {\it $\IT$-equivariantly Ding-semistable}  for filtrations (resp. normal test configurations, special test configurations) if $\BD(\CF)\ge 0$ for any $\IT$-invariant linearly bounded filtration $\CF$ (resp. any filtration $\CF$ of $\IT$-equivariant normal test configuration, any filtration $\CF$ of $\IT$-equivariant special test configuration). 

Replacing $\BD$ by $\Fut$, we get the definition of {\it $\IT$-equivariantly K-semistability} for normal test configurations (resp. special test configurations). 
\end{defi}

\begin{rmk}\rm
We will see that Ding-semistability for filtrations has a good characterization by the delta invariant in Theorem \ref{Theorem. D-ss equiv. delta >= 1}, which can be seen as ``stability for valuations''. We simply say that a log Fano cone is {\it Ding-semistable} if it is $\IT$-equivariantly Ding-semistable for filtrations. 
\end{rmk}

\begin{defi}[K/Ding-polystability]\rm
\label{Definition. K/Ding polystability}
A Ding-semistable log Fano cone $(X,\D,\xi_0)$ is called {\it $\IT$-equivariantly Ding-polystable} for normal test configurations (resp. special test configurations) if for any $\IT$-equivariantly normal test configuration (resp. special test configuration) $(\CX, \D_\CX,\xi_0;\eta)$, $\BD(\CX, \D_\CX,\xi_0;\eta)= 0$ implies that $(\CX, \D_\CX,\xi_0;\eta)$ is of product type. 

Replacing $\BD$ by $\Fut$, we get the definition of {\it $\IT$-equivariantly K-polystability} for normal test configurations (resp. special test configurations). 
\end{defi}

Following from \cite{XZ20}, we define the delta invariant of the log Fano cone $(X,\D,\xi_0)$ by 
\begin{eqnarray}
\delta_{\IT}(X,\D,\xi_0) := \mathop{\inf}_{v\in \Val^{\IT,*}_{X,x}} \frac{A_{X,\D}(v)}{A(\xi_0)S(\xi_0;v)}. 
\end{eqnarray} 

\begin{lem}
\label{Lemma: delta le 1}
For any $\xi_0\in \Bt^+_\IR$, we have $\delta_\IT(X,\D,\xi_0)\le1$. 
\end{lem}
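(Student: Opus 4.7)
The plan is to prove this by exhibiting a single test valuation that realizes the value $1$ in the infimum defining $\delta_\IT$, namely the toric valuation $\wt_{\xi_0}$ associated with the Reeb vector $\xi_0$ itself. This is the natural candidate because $\wt_{\xi_0}$ is manifestly $\IT$-invariant, is centered at the vertex $x$ (since $\xi_0 \in \Reeb$ forces positive weights on $\fm_x$), and has finite log discrepancy $A_{X,\D}(\wt_{\xi_0}) = A(\xi_0) < \infty$ by the klt assumption (as recorded in Lemma \ref{Lemma. A(v_xi)=A(v)+A(xi)}). Hence $\wt_{\xi_0} \in \Val^{\IT,*}_{X,x}$ and is an admissible competitor in the infimum.

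The next step is to evaluate the numerator and denominator of the ratio at $v = \wt_{\xi_0}$. The numerator is $A_{X,\D}(\wt_{\xi_0}) = A(\xi_0)$ by the very definition of $A(\xi_0)$ as the log discrepancy of $\wt_{\xi_0}$. For the denominator, the factor $A(\xi_0)$ is the same, and $S(\xi_0;\xi_0) = 1$ as observed immediately after Lemma \ref{Lemma. S(F_xi)=S(F)+S(xi)} (and verifiable directly: with $\CF_0 = \CF_{\wt_{\xi_0}}$ one has $\tS_m(\xi_0;\xi_0) = \tS_m(\xi_0;\xi_0)$ so the ratio $S_m(\xi_0;\xi_0)$ is identically $1$ for every $m$). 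Thus
\begin{equation*}
\delta_\IT(X,\D,\xi_0) \,\,\,\le\,\,\, \frac{A_{X,\D}(\wt_{\xi_0})}{A(\xi_0)\,S(\xi_0;\wt_{\xi_0})} \,\,\,=\,\,\, \frac{A(\xi_0)}{A(\xi_0)\cdot 1} \,\,\,=\,\,\, 1.
\end{equation*}

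There is no genuine obstacle: the only small things to verify are the admissibility of $\wt_{\xi_0}$ (already justified by Lemma \ref{Lemma. twist valuations to vertex} applied with the trivial valuation, or simply by noting that $\xi_0 \in \Reeb$) and the positivity $A(\xi_0) > 0$ needed to divide by it, which follows from the klt hypothesis on $(X,\D)$. So the entire argument is one displayed inequality after identifying the correct test valuation.
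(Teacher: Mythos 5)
Your proof is correct. The paper takes a slightly different (and less direct) route: it fixes an arbitrary $v\in\Val^{\IT,*}_{X,x}$, considers the twisted valuations $v_{a\xi_0}$, and shows via Lemmas \ref{Lemma. A(v_xi)=A(v)+A(xi)} and \ref{Lemma. S(F_xi)=S(F)+S(xi)} that
$\frac{A_{X,\D}(v_{a\xi_0})}{A(\xi_0)S(\xi_0;v_{a\xi_0})} = \frac{A_{X,\D}(v) + aA(\xi_0)}{A(\xi_0)S(\xi_0;v) + aA(\xi_0)} \to 1$
as $a\to+\infty$, so the infimum is at most $1$ by a limiting argument. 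You instead exhibit the single competitor $v=\wt_{\xi_0}$ and observe that the ratio is exactly $1$ there, since $A_{X,\D}(\wt_{\xi_0}) = A(\xi_0)$ by definition and $S(\xi_0;\xi_0)=1$ trivially. Both are correct; your version is more economical, and it also makes explicit the fact (recorded in Remark \ref{Remark. non-existence of delta minimizer}) that when $\delta_\IT = 1$ the infimum is actually attained by $\wt_{\xi_0}$, whereas the paper's limiting argument only shows the infimum is approached. The paper's phrasing has the minor advantage of showing that \emph{every} valuation, after sufficient twisting, approaches ratio $1$, which is the mechanism reused in Remark \ref{Remark. non-existence of delta minimizer} and in the proof of Theorem \ref{Lemma. characterization of delta^red > 1}; but for this lemma alone your direct argument is all that is needed.
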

\begin{proof}
For any valuation $v\in\Val^{\IT,*}_{X,x}$, by Lemma \ref{Lemma. A(v_xi)=A(v)+A(xi)} and Lemma \ref{Lemma. S(F_xi)=S(F)+S(xi)}, we have 
\begin{eqnarray*}
\frac{A_{X,\D}(v_{a\xi_0})}{A(\xi_0)S(\xi_0;v_{a\xi_0})} 
= 
\frac{A_{X,\D}(v) + aA(\xi_0)}{A(\xi_0)S(\xi_0;v) + aA(\xi_0)} 
\to 1, 
\end{eqnarray*} 
when $a\to +\infty$. 
\end{proof}

By the work of \cite{LW24}, we know that $\delta_\IT(X,\D,\xi_0)=1$ is equivalent that $(X,\D,\xi_0)$ is Ding-semistable for NA-functionals. For the sake of completeness, we present a proof in our notations. 

\begin{thm}
\label{Theorem. D-ss equiv. delta >= 1}
Let $(X,\D,\xi_0)$ be a log Fano cone. It is $\IT$-equivariantly Ding-semistable for filtrations if and only if $\delta_\IT(X,\D,\xi_0)\ge 1$. 
\end{thm}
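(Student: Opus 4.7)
The plan is to split the equivalence into the two implications and reduce each to a standard filtration/valuation dictionary. Unpacking the definitions, it suffices to prove that the inequality $A_{X,\D}(v)\ge A(\xi_0)\,S(\xi_0;v)$ holds for every $v\in\Val^{\IT,*}_{X,x}$ if and only if $\lct(X,\D;\CF)\ge A(\xi_0)\,S(\xi_0;\CF)$ holds for every $\IT$-invariant linearly bounded filtration $\CF$.

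For the ``only if'' direction (Ding-semistability $\Rightarrow \delta_\IT\ge 1$), I would test the Ding invariant against the valuation filtration $\CF_v$ for an arbitrary $v\in\Val^{\IT,*}_{X,x}$. By definition $S(\xi_0;\CF_v)=S(\xi_0;v)$. Since $v(\CF_v^m)=m$, the classical inequality $\lct(\CF_v^m)\le A_{X,\D}(v)/v(\CF_v^m)$ applied level-wise gives $m\cdot\lct(\CF_v^m)\le A_{X,\D}(v)$, so $\lct(\CF_v)=\lim_m m\cdot\lct(\CF_v^m)\le A_{X,\D}(v)$. The hypothesis $\BD(\CF_v)\ge 0$ then reads $A_{X,\D}(v)\ge\lct(\CF_v)\ge A(\xi_0)S(\xi_0;v)$, and taking the infimum over $v$ gives $\delta_\IT(X,\D,\xi_0)\ge 1$.

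For the ``if'' direction, fix an arbitrary $\IT$-invariant linearly bounded $\CF$. I would combine two ingredients. First, the valuative characterization
\[\lct(X,\D;\CF)=\inf_{v}\,\frac{A_{X,\D}(v)}{v(\CF)},\]
where the infimum runs over $v\in\Val^{\IT,*}_{X,x}$ with $v(\CF)>0$; this is the natural local analog of \cite[Lemma 1.49]{Xu24} for linearly bounded $\fm_x$-filtrations, with restriction to $\IT$-invariant valuations following from standard equivariant averaging on any minimizing sequence (which respects the $\IT$-invariance of $\CF$). Second, the monotonicity bound
\[v(\CF)\cdot S(\xi_0;\CF)\le S(\xi_0;v),\]
valid for every $v\in\Val^{\IT,*}_{X,x}$. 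This follows from the inclusion of filtrations $\CF\subseteq v(\CF)^{-1}\CF_v$, which is a direct restatement of $v(\CF^\lambda)\ge \lambda\,v(\CF)$ in the paper's rescaling convention, together with the monotonicity of $S(\xi_0;-)$ under inclusion and its linearity under rescaling $S(\xi_0;a\CG)=a\,S(\xi_0;\CG)$ recorded in the excerpt. Granting both, for each $v\in\Val^{\IT,*}_{X,x}$ with $v(\CF)>0$ the assumption $\delta_\IT\ge 1$ gives
\[\frac{A_{X,\D}(v)}{v(\CF)}\ge \frac{A(\xi_0)S(\xi_0;v)}{v(\CF)}\ge A(\xi_0)\,S(\xi_0;\CF),\]
and taking the infimum over $v$ delivers $\lct(\CF)\ge A(\xi_0)S(\xi_0;\CF)$, i.e., $\BD(\CF)\ge 0$.

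The main obstacle I expect is the valuative formula in ingredient (a) above. The inequality $\lct(\CF)\le A_{X,\D}(v)/v(\CF)$ is immediate from the ideal-level lct inequality applied to each $\CF^m$ combined with the defining limit $\lct(\CF)=\lim_m m\cdot\lct(\CF^m)$; the reverse inequality, however, requires producing quasi-monomial minimizers of the ideal lcts $\lct(\CF^m)$ and controlling their limit as $m\to\infty$, which is now standard in the local K-stability toolbox but is the only non-formal step. The passage to $\IT$-invariance of the minimizing valuations is then a routine equivariant averaging argument, using that both $\CF$ and the volume invariants $A_{X,\D}$, $S(\xi_0;-)$ are $\IT$-invariant.
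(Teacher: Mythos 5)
Your proof is correct and follows essentially the same route as the paper: the forward direction tests the Ding invariant on $\CF_v$ exactly as the paper does, and your backward direction is a cosmetic repackaging of the paper's argument, since the valuative formula $\lct(\CF)=\inf_v A_{X,\D}(v)/v(\CF)$ (with infimum achieved) is precisely what the paper invokes when it picks a quasi-monomial minimizer $v$ with $v(\CF)=1$ and notes $\CF\subseteq\CF_v$, and your bound $v(\CF)\,S(\xi_0;\CF)\le S(\xi_0;v)$ is the same $S$-monotonicity under inclusion used there. The one ingredient you correctly flag as non-formal, the existence of a quasi-monomial valuation computing $\lct(\CF)$, is the same appeal to \cite{Xu19} that the paper makes.
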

\begin{proof}
For any $v\in \Val^{\IT,*}_{X,x}$, we have $A_{X,\D}(v)\ge \lct(X,\D;\CF_v)$. If $(X,\D,\xi_0)$ is $\IT$-equivariantly Ding-semistable for filtrations, then 
$$A_{X,\D}(v)-A(\xi_0)S(\xi_0;v)\ge \BD(\CF_v) \ge 0 $$
for any $v\in \Val^{\IT,*}_{X,x}$. Hence $\delta_\IT(X,\D,\xi_0)\ge 1$. 

Conversely, for any $\IT$-invariant linearly bounded filtration $\CF$, let $v$ be a minimizer of $\lct(X,\D;\CF)$ such that $v(\CF)=1$. Then $\CF\seq \CF_v$. Hence 
$$A_{X,\D}(v) = \lct(X,\D;\CF)\le \lct(X,\D;\CF_v)\le A_{X,\D}(v),\quad 
S(\xi_0;\CF) \le S(\xi_0;\CF_v). $$
Hence $\BD(\CF) \ge \BD(\CF_v) = A_{X,\D}(v) - A(\xi_0)S(\xi_0;v) \ge 0$. 
\end{proof}

\subsection{Delta minimizers are Koll\'ar valuations}

We have the following analog of \cite[Theorem 1.2]{LXZ22}, which can be viewed as a local version of optimal destabilization. This was given by \cite{Hua22}. We will give a proof based on Xu-Zhuang's resolution of the stable degeneration conjecture \cite{XZ22}. Note that for any log Fano cone $(X,\D,\xi_0)$, we always have $\delta_\IT(X,\D,\xi_0)\le 1$ by Lemma \ref{Lemma: delta le 1}. 
\begin{thm}
\label{Theorem. Local optimal destablization}
Let $(X,\D,\xi_0)$ be a log Fano cone. If $\delta_{\IT}(X,\D,\xi_0)$ admits a quasi-monomial minimizer $v_0$, then $v_0$ is a Koll\'ar valuation, and there exists a Koll\'ar component minimizing $\delta_{\IT}(X,\D,\xi_0)$. 
\end{thm}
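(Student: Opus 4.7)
The plan is to adapt the global strategy of \cite{LXZ22} for optimal destabilization to the local setting, with the higher rank finite generation theorem of \cite{XZ22} as the key input. After rescaling $v$ so that $S(\xi_0;v) = 1$, we have $A_{X,\D}(v) = \delta\cdot A(\xi_0)$ where $\delta := \delta_\IT(X,\D,\xi_0) \le 1$ by Lemma~\ref{Lemma: delta le 1}. The first step is to exhibit $v$ as an lc place of some $\IT$-invariant $\IQ$-complement $\Gamma$ of $x\in(X,\D)$. A short argument using the minimizer property, Theorem~\ref{Theorem. D-ss equiv. delta >= 1}, and the monotonicity $\CF_v \subseteq \CF_w$ (for any $w$ with $w(\CF_v)\ge 1$) implying $S(\xi_0;v)\le S(\xi_0;w)$, shows that $\lct(X,\D;\CF_v) = A_{X,\D}(v)$, i.e.\ $v$ computes its own lct. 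Combined with boundedness of local complements at klt singularities as used in \cite{Xu19}, this produces a $\IT$-invariant $\IQ$-complement $\Gamma$ with $v\in\LC(X,\D+\Gamma)$.

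Second, invoke the higher rank finite generation theorem \cite{XZ22}: since $v$ is a quasi-monomial lc place, $\Gr_v R$ is a finitely generated $\Ik$-algebra, and one obtains a $\IT$-equivariant toroidal model $(Y, E=E_1+\cdots+E_r)\to X$ with center $\eta$ such that $v\in\QM_\eta(Y,E)$ lies in the relative interior of a \emph{special} $\IT$-invariant quasi-monomial simplicial cone $\sigma$ in which every valuation is an lc place of $(X,\D+\Gamma)$. On $\sigma$ the log discrepancy $A_{X,\D}(w)=w(\Gamma)$ is linear in $w$; by Theorem~\ref{Theorem. Special cone: geodesic = line sigment}, $w\mapsto S(\xi_0;w)$ is also linear on $\sigma$. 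Hence
\[
\delta_\sigma(w) \;:=\; \frac{A_{X,\D}(w)}{A(\xi_0)\,S(\xi_0;w)}
\]
is a ratio of two positive linear forms on the simplex $\sigma$. Since $\delta_\sigma$ attains its minimum at the relative interior point $v$, it must be constant on $\sigma$, equal to $\delta$. In particular, each extremal ray of $\sigma$ yields a divisorial minimizer of $\delta_\IT(X,\D,\xi_0)$.

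Third, upgrade a divisorial minimizer $v' = c\cdot\ord_E$ to a Koll\'ar valuation: a standard perturbation of complements (cf.\ \cite{Xu19}) produces another $\IT$-invariant $\IQ$-complement $\Gamma'$ of $x\in(X,\D)$ making $(X,\D+\Gamma')$ plt with $E$ as its unique lc place, so $E$ is a Koll\'ar component and $v'$ a Koll\'ar valuation minimizing $\delta_\IT(X,\D,\xi_0)$. That $v$ itself is a Koll\'ar valuation should follow by choosing $\sigma$ to be the \emph{minimal} special cone containing $v$ in its relative interior on which both $A_{X,\D}$ and $S(\xi_0;-)$ are linear; the finite-generation/special-degeneration structure of \cite{XZ22} then forces $\dim\sigma = 1$, so $v$ is already divisorial. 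The main technical obstacle is the first step, namely producing a $\IQ$-complement of $x\in(X,\D)$ whose lc places include $v$: unlike in the global setting of \cite{LXZ22}, we cannot directly use the ampleness of $-(K_X+\D)$, so we must instead approximate $v$ by the $\IN$-filtration $\CF_v^m$, pass to the log-canonical limit of $\lct(X,\D;\CF_v^m)$, and invoke boundedness of complements at klt singularities in its local formulation to realize the limit by an effective divisor.
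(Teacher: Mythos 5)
Your overall architecture (minimizer computes its own lct $\Rightarrow$ lc place of a complement $\Rightarrow$ apply higher rank finite generation $\Rightarrow$ exploit linearity of $A$ and $S$ on a special cone) matches the paper's, and your "short argument" that $\lct(X,\D;\CF_v)=A_{X,\D}(v)$ is indeed correct. However, there is a genuine gap exactly where you flag the "main technical obstacle," and your proposed remedy does not close it.

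The condition $\lct(X,\D;\CF_v)=A_{X,\D}(v)$ together with boundedness of local complements yields only that $v$ is a \emph{weakly special} valuation, i.e.\ an lc place of \emph{some} $\IQ$-complement $\Gamma$. But Theorem~\ref{Theorem. XZ22 higher rank finite generation} does \emph{not} say that a quasi-monomial lc place of an arbitrary $\IQ$-complement has finitely generated $\Gr_v R$; condition (2) there requires a \emph{special} $\IQ$-complement — one whose strict transform dominates an effective ample $G$ on a toroidal model with $G$ avoiding all strata of $E$. Weakly special does not imply special, and this distinction is precisely where the extra work lives. The paper's route is Theorem~\ref{Theorem. delta minimizer is very special}: the $\delta$-minimizer $v_0$ computes its own lct not just for $(X,\D)$ but for $(X,\D+D)$ for every $\IT$-invariant effective $\IQ$-Cartier $D$ with $\wt_{\xi_0}(D)<\frac{\delta A(\xi_0)}{n}$; one then chooses $D$ with $\pi'^{-1}_*D$ ample and avoiding strata, producing a \emph{special} complement to feed into Theorem~\ref{Theorem. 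XZ22 higher rank finite generation}$(2)\Rightarrow(3)$. Your approximation of $v$ by $\CF^m_v$ plus boundedness of complements reproduces the weakly special conclusion but gives you no control on whether the resulting complement is special, so you cannot legitimately invoke \cite{XZ22}. Secondly, your closing claim that the "finite-generation/special-degeneration structure forces $\dim\sigma=1$, so $v$ is already divisorial" is false: the theorem asserts that $v_0$ is a Koll\'ar \emph{valuation} (which by the paper's definition may have higher rational rank, sitting in the interior of a special simplicial cone), and asserts separately the \emph{existence} of a divisorial Koll\'ar component minimizer obtained by passing to a rational ray of $\sigma$. Nothing in \cite{XZ22} forces a quasi-monomial $\delta$-minimizer to be divisorial.
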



\begin{rmk} \rm \label{Remark. non-existence of delta minimizer}   If $\delta_{\IT}(X,\D,\xi_0)<1$, then any $v\in\Val_{X,x}^{\IT,*}$ would not minimize $\delta_{\IT}(X,\D,\xi_0)$. Since for $0<a\ll 1$, we still have $v_{-a\xi_0}\in \Val_{X,x}^{\IT,*}$. Hence \begin{eqnarray*} \frac{A_{X,\D}(v_{-a\xi_0})}{A(\xi_0)S(\xi_0;v_{-a\xi_0})} = \frac{A_{X,\D}(v) - aA(\xi_0)}{A(\xi_0)S(\xi_0;v) - aA(\xi_0)} < \frac{A_{X,\D}(v)}{A(\xi_0)S(\xi_0;v)} < 1. \end{eqnarray*}  
If $\delta_{\IT}(X,\D,\xi_0)=1$, then it is minimized by $\wt_{\xi}$ for any $\xi\in\Reeb$. We will see in the proof of Theorem \ref{Theorem: Intro. Ding-ps for special test configurations implies Ding-ps} that $\delta_\IT(X,\D,\xi_0)$ admits non-product minimizer in $\Val_{X,x}^{\IT,*}$ under the assumption of strict Ding-semistability for normal test configurations.  \end{rmk}

Let us recall the basic notions in the higher-rank finite generation theory developed by \cite{LXZ22,XZ22} (see also \cite{Che25}). 
A $\IQ$-complement $\Gamma$ of $(X,\D)$ is called {\it special} with respect to a log smooth (resp. toroidal) model $\pi: (Y,E)\to (X,\D)$ if $\pi_*^{-1}\Gamma\ge G$ for some effective ample $\IQ$-divisor $G$ on $Y$ whose support does not contain any stratum of $(Y,E)$. 
A model $\pi: (Y,E)\to (X,\D)$ is said to be of {\it qdlt Fano type} if there exists an effective $\IQ$-divisor $D$ on $Y$ such that $D+E\ge \pi_*^{-1}\D$, $\lfloor D+E \rfloor = E$, $(Y,D+E)$ is qdlt (that is, quotient of dlt singularities by abelian groups) and $-(K_Y+D+E)$ is ample. In this case, the quasi-monomial complex $\QM(Y,E)\seq \Val_{X,x}^*$ is well-defined (\cite[Definition 2.8]{XZ22}) and is a simplicial cone (\cite[Lemma 2.3]{XZ22}), which is called a {\it special} quasi-monomial cone. Any valuation $v$ in $\QM(Y,E)$ is called a {\it Koll\'ar valuation} (or special valuation). Moreover, $v$ is called a {\it Koll\'ar component} if it is divisorial. We have the following characterization of Koll\'ar valuations. 

\begin{thm}\cite[Theorem 4.1]{XZ22}
\label{Theorem. XZ22 higher rank finite generation}
Let $x\in(X=\Spec(R),\D)$ be a klt singularity, and let $v\in\Val_{X,x}$ be a
quasi-monomial valuation. Then the following are equivalent. 
\begin{enumerate}
\item The associated graded ring $\Gr_vR$ is finitely generated and the central fiber $(X_v,\D_v)$
of the induced degeneration is klt.
\item There exists a special $\IQ$-complement $\Gamma$ of $(X,\D)$ with respect to some toroidal model $(Y',E')$ such that $v\in \LC(X,\D+\Gamma)\cap \QM(Y',E')$.
\item There exists a qdlt Fano type model $\pi: (Y,E)\to (X,\D)$ such that $v\in \QM(Y,E)$. 
\end{enumerate}
\end{thm}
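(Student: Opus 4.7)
The plan is to establish the cyclic chain $(3) \Rightarrow (2) \Rightarrow (1) \Rightarrow (3)$, which is the natural pattern for such equivalences between geometric, complement-theoretic, and degeneration-theoretic characterizations of a Koll\'ar valuation.

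First, for $(3) \Rightarrow (2)$, I would produce the special complement by a Bertini-type general section on the qdlt Fano type model $\pi:(Y,E) \to (X,\D)$. Since $-(K_Y+D+E)$ is ample, a general effective $G \sim_\IQ -(K_Y+D+E)$ avoids every stratum of $(Y,E)$, and pushing forward yields an effective $\Gamma$ on $X$ with $K_X+\D+\Gamma \sim_\IQ 0$. The pair $(X,\D+\Gamma)$ is lc because $(Y,D+E+G)$ is lc by Bertini, the strict transform $\pi_*^{-1}\Gamma$ dominates the ample $G$ so the complement is special, and every $v \in \QM(Y,E)$ satisfies $A_{X,\D+\Gamma}(v)=0$ by the log discrepancy computation using that $D+E$ has coefficient $1$ along each $E_i$. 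Hence $(Y,E)$ itself, viewed as a toroidal model $(Y',E')$, witnesses condition (2).

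Next, for $(2) \Rightarrow (1)$, I would use the special complement to control the filtration $\CF_v$ and the Rees construction on it. Concretely, the ample auxiliary divisor $G \le \pi_*^{-1}\Gamma$ on the toroidal model $(Y',E')$ allows one to identify, after rescaling the rays generating the face of $\QM(Y',E')$ containing $v$, the Rees-type compactification of $\Gr_v R$ with a polarized family over a lower-dimensional base, with polarization induced from $G$. Finite generation of $\Gr_v R$ then follows from ampleness of this polarization, and klt-ness of $(X_v,\D_v)$ follows from the lc property of $(X,\D+\Gamma)$ together with inversion of adjunction applied along the degeneration, because $v$ (and every limit along the same face) is an lc place of $(X,\D+\Gamma)$.

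The main obstacle will be $(1) \Rightarrow (3)$. Starting from finitely generated $\Gr_v R$ with klt central fiber, one first extracts the rank-$r$ torus action on $(X_v,\D_v)$ where $r$ is the rational rank of $v$, and notes that $v$ corresponds to a vector in the Reeb cone of this degeneration. I would then run a $\IT$-equivariant MMP on a $\IT$-equivariant log resolution of $(X_v,\D_v)$ to extract a qdlt Fano type boundary containing the divisors that specialize to the faces of the simplicial cone around $v$, and transport the resulting birational model back to $X$ via the interpolation inherent in the Rees algebra. The hard part is that, unlike the rank-one setting of \cite{LXZ22} where a single plt blow-up extracting one divisorial valuation suffices, here one must simultaneously realize \emph{all} the rays of the cone $\QM(Y,E)$ as boundary components of a single qdlt pair with $-(K_Y+D+E)$ ample; this requires the MMP to respect the entire simplicial stratification rather than only the ray spanned by $v$, and this is precisely the advance encoded in the higher-rank finite generation machinery of \cite{XZ22}.
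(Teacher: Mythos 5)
This theorem is cited from \cite[Theorem 4.1]{XZ22} and is not proved in the paper; the paper uses it as a black box. So there is no in-paper proof to compare against, and the only comparison is to the argument in \cite{XZ22} itself.

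Your cyclic chain $(3)\Rightarrow(2)\Rightarrow(1)\Rightarrow(3)$ matches the structure of the argument in \cite{XZ22}, and your $(3)\Rightarrow(2)$ Bertini sketch is essentially right. But your assessment of where the difficulty lies is backwards. The step you call ``the main obstacle,'' $(1)\Rightarrow(3)$, is the more routine one: once $\Gr_v R$ is finitely generated and the central fiber is klt, the Rees construction gives a multi-graded degeneration over a toric base with a good torus action, and one lifts the qdlt Fano type boundary structure from the Fano cone $(X_v,\D_v)$ back to a model over $X$. The genuinely hard step --- the one that occupies almost all of \cite{XZ22} and that you explicitly attribute to the wrong implication --- is $(2)\Rightarrow(1)$: proving finite generation of $\Gr_v R$ from the existence of a special $\IQ$-complement. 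Your description of that step, ``identify the Rees-type compactification of $\Gr_v R$ with a polarized family$\ldots$ finite generation follows from ampleness of this polarization,'' is not an argument; it glosses over the entire higher-rank finite generation machinery. Even in the rank-one case of \cite{LXZ22}, finite generation for an lc place of a complement requires a delicate induction via boundedness of complements; in the higher-rank setting $v$ has irrational coordinates, so one cannot simply run an MMP extracting a single divisor, and the fix is an intricate argument on the structure of the cone $\QM(Y,E)$ and a degeneration to models with a toric structure. If you were to actually write this proof out, $(2)\Rightarrow(1)$ would consume the bulk of the paper, not $(1)\Rightarrow(3)$.
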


\begin{thm}\cite[Corollary 4.10]{XZ22}
\label{Theorem. Special cone: geodesic = line sigment}
Let $x\in(X=\Spec(R),\D)$ be a klt singularity, and $\pi: (Y,E=E_1+\cdots+E_r)\to (X,\D)$ be a qdlt Fano type model. Let $\CF_i = \CF_{\ord_{E_i}}$. Then for any $\alpha=(\alpha_1,\cdots,\alpha_r) \in\IR^r_{\ge 0}$, we have $\CF_\alpha = \CF_{v_\alpha}$ (see Lemma \ref{Lemma. geodesic seq line-sigment}). 
\end{thm}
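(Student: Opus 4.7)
The plan is to reduce the claimed equality of ideals to a transparent toric computation by degenerating $X$ along the whole special cone $\QM(Y,E)$; such a multi-step degeneration is available precisely because higher-rank finite generation makes the associated multi-graded ring finitely generated. Since $\CF_\alpha\seq\CF_{v_\alpha}$ is already supplied by Lemma \ref{Lemma. geodesic seq line-sigment}, only the reverse containment requires argument.

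First I would pick a rationally generic $\beta\in\IR_{>0}^r$ so that the value group of $v_\beta\in\QM(Y,E)$ has rank $r$. By Theorem \ref{Theorem. XZ22 higher rank finite generation} applied to $v_\beta$, the ring $\Gr_{v_\beta}R$ is finitely generated with klt spectrum; unpacking the refinement along the full lattice, this is equivalent to finite generation of the $\IN^r$-graded ring
\[
\Gr R \;=\; \bigoplus_{k\in\IN^r} \frac{\bigcap_i\CF_i^{k_i}}{\sum_j\bigcap_i\CF_i^{k_i+\de_{ij}}},
\]
and produces a flat $\IT$-equivariant multi-Rees degeneration of $X$ to $X_0=\Spec\Gr R$. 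On $X_0$ the divisors $E_i$ become the toric divisors cutting out the $\IN^r$-grading, and the image of $v_\alpha$ is the monomial valuation $\sum_i\alpha_i\wt_i$; consequently both $\CF_\alpha^\lam$ and $\CF_{v_\alpha}^\lam$ degenerate on $X_0$ to the weight ideal $\bigoplus_{\sum_i\alpha_ik_i\ge\lam}(\Gr R)_k$ and therefore agree on the central fiber.

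Finally I would transport this identity back to $X$. By flatness of the multi-Rees degeneration, the colengths $\ell(R/\CF_\alpha^\lam)$ and $\ell(R/\CF_{v_\alpha}^\lam)$ equal the corresponding colengths in $\Gr R$, which we have just shown to coincide. Combined with the inclusion $\CF_\alpha^\lam\seq\CF_{v_\alpha}^\lam$, equality of these finite colengths forces equality of ideals. Irrational $\alpha$ is handled by rational approximation using left-continuity of filtrations in $\lam$ and continuity of $v_\alpha$ in $\alpha$. The principal obstacle is rigorously setting up the flat multi-filtration degeneration and tracking colength identities across it, which is exactly where one needs Theorem \ref{Theorem. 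XZ22 higher rank finite generation} applied to the entire cone $\QM(Y,E)$ rather than to individual Koll\'ar valuations.
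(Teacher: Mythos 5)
The statement is cited directly from \cite[Corollary 4.10]{XZ22}; the present paper supplies no proof of its own, so your attempt stands on its own merits. Your overall strategy — degenerate $R$ along the qdlt cone to a $\IT^r$-variety where everything becomes toric, then compare colengths across the flat family — is the right idea and is in the spirit of XZ22's argument. However, the central step contains a genuine circularity.

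You assert that finite generation of $\Gr_{v_\beta}R$ (from Theorem \ref{Theorem. XZ22 higher rank finite generation} applied to a single rationally generic $v_\beta$) is, after ``unpacking the refinement along the full lattice,'' \emph{equivalent} to finite generation of the $\IN^r$-graded ring
\[
\Gr R \;=\; \bigoplus_{k\in\IN^r}\frac{\bigcap_i\CF_i^{k_i}}{\sum_j\bigcap_i\CF_i^{k_i+\de_{ij}}}.
\]
But these two objects are not obviously the same. The degree-$k$ piece of $\Gr_{v_\beta}R$ is $\CF_{v_\beta}^\lam/\CF_{v_\beta}^{>\lam}$ with $\lam=\sum_i\beta_ik_i$. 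The containment $\bigcap_i\CF_i^{k_i}\seq\CF_{v_\beta}^\lam$ holds by Lemma \ref{Lemma. geodesic seq line-sigment}, but it can a priori be strict: a function $f$ can satisfy $v_\beta(f)\ge\lam$ while having $\ord_{E_j}(f)<k_j$ for some $j$, compensated by large $\ord_{E_{j'}}(f)$ for other indices. Identifying these two graded rings — and with it, asserting that the $E_i$ degenerate to ``the toric divisors cutting out the $\IN^r$-grading'' — is essentially the content of the theorem at $\alpha=\beta$, not a formal consequence of single-valuation finite generation. The subsequent claims (that $v_\alpha$ degenerates to $\sum_i\alpha_i\wt_i$, and that both filtrations degenerate to the same weight ideal) lean on this identification.

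The way XZ22 avoid this circularity is to prove finite generation of the multi-Rees algebra $\bigoplus_{k\in\IN^r}\bigcap_i\CF_i^{k_i}=\bigoplus_k\pi_*\CO_Y(-\sum_ik_iE_i)$ \emph{directly}, using the Fano-type MMP on $Y$: the divisors $-\sum_ik_iE_i$ become semiample after running MMP over $X$, by the qdlt Fano-type hypothesis and \cite{BCHM10}, without presupposing any relation between $\CF_\alpha$ and $\CF_{v_\alpha}$. Once that algebra is known to be finitely generated, the flat degeneration exists, and the colength comparison and flatness argument you describe at the end do go through. To repair your proof, replace the ``unpacking'' by this direct MMP finite generation argument (or by the precise compatibility lemma in XZ22 relating the $v_\beta$-degeneration to the multi-step one), and then verify the degeneration of each $\ord_{E_i}$ to the corresponding toric divisor; the rational approximation paragraph for irrational $\alpha$ is then unnecessary, since the argument works uniformly in $\alpha\in\IR_{\ge0}^r$ once the degeneration is in place.
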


\begin{cor}
\label{Corollary. S is linear on special cone}
Let $(X=\Spec(R),\D,\xi_0)$ be a log Fano cone singularity, and $\pi: (Y,E=E_1+\cdots+E_r)\to (X,\D)$ be a qdlt Fano type model. Then the function $\QM(Y,E)\to \IR_{\ge 0},\,\, v\mapsto S(\xi_0; v)$ is linear, and the function $\QM(Y,E)\to \IR_{\ge 0},\,\, v\mapsto \lam_\max(\xi_0; v)$ is convex. 
\end{cor}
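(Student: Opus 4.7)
The plan is to upgrade the linearity of $S$ along a geodesic of filtrations (equation (\ref{Eqnarray. S is linear along geodesics})) and the elementary convexity of $\lam_{\max}^{(m)}$ as a maximum of affine functions, from the level of filtrations to the level of quasi-monomial valuations. The bridge is Theorem \ref{Theorem. Special cone: geodesic = line sigment}: in the qdlt Fano type setting, the inclusion of Lemma \ref{Lemma. geodesic seq line-sigment} is upgraded to the equality $\CF_{v_\alpha} = \CF_\alpha$, so the order function associated to $\CF_{v_\alpha}$ becomes linear in the parameter $\alpha$:
\[
v_\alpha(f) \,=\, \ord_{\CF_{v_\alpha}}(f) \,=\, \ord_{\CF_\alpha}(f) \,=\, \sum_{i=1}^r \alpha_i\, \ord_{E_i}(f), \quad \forall\, f \in R.
\]

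Fix two valuations $v_0 = v_{\alpha^{(0)}},\, v_1 = v_{\alpha^{(1)}} \in \QM(Y,E)$ and set $v_t = v_{\alpha(t)}$ with $\alpha(t) = (1-t)\alpha^{(0)} + t\alpha^{(1)}$. Substituting $\alpha(t)$ into the display above gives
\[
\ord_{\CF_{v_t}} \,=\, (1-t)\,\ord_{\CF_{v_0}} + t\,\ord_{\CF_{v_1}},
\]
which is precisely the defining property of the geodesic connecting $\CF_{v_0}$ and $\CF_{v_1}$ with weights $1-t$ and $t$. Hence $\CF_{v_t}$ \emph{is} this geodesic, and equation (\ref{Eqnarray. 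S is linear along geodesics}) immediately yields
\[
S(\xi_0; v_t) \,=\, S(\xi_0; \CF_{v_t}) \,=\, (1-t)\,S(\xi_0; v_0) + t\,S(\xi_0; v_1),
\]
establishing linearity of $S(\xi_0;-)$ on $\QM(Y,E)$.

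For convexity of $\lam_{\max}(\xi_0;-)$, I use
\[
\lam_{\max}^{(m)}(\xi_0; \CF_{v_t}) \,=\, \max\{\lam : \CF_{v_t}^\lam \nsubseteq \CF_{\wt_{\xi_0}}^m\} \,=\, \sup\{v_t(f) : f \in R,\ f \notin \CF_{\wt_{\xi_0}}^m\},
\]
which is a supremum, over a $t$-independent set, of functions $t \mapsto v_t(f) = (1-t)v_0(f) + t\,v_1(f)$ that are affine in $t$; such a supremum is convex in $t$. Convexity is preserved under the $m \to \infty$ passage of (\ref{Eqnarray. lam_max^(m) convergence}), giving convexity of $\lam_{\max}(\xi_0;-)$ along the line segment, and hence on the whole cone $\QM(Y,E)$.

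I do not anticipate any serious obstacle here: the heavy lifting lives in Theorem \ref{Theorem. Special cone: geodesic = line sigment}, and once the linearity $v_\alpha(f) = \sum_i \alpha_i\, \ord_{E_i}(f)$ is in hand, both statements follow by directly invoking the corresponding results for geodesics of filtrations.
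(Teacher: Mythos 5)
Your proof is correct and takes the same route the paper intends: the paper states this corollary without an explicit proof, but the sentence immediately preceding Theorem~\ref{Theorem. Special cone: geodesic = line sigment} announces exactly your mechanism for linearity of $S$, and your argument for convexity of $\lam_\max$ (writing $\lam_\max^{(m)}$ as a supremum, over the $t$-independent set $R \setminus \CF_{\wt_{\xi_0}}^m$, of the affine functions $t \mapsto v_t(f)$) mirrors the technique the paper uses in Lemma~\ref{Lemma: J is convex w.r.t twist}. The one point worth stressing explicitly is that the affineness $v_t(f) = (1-t)v_0(f) + t v_1(f)$ genuinely requires the qdlt Fano type hypothesis via $\CF_\alpha = \CF_{v_\alpha}$: on a general quasi-monomial cone one only has $v_\alpha(f) \ge \sum_i \alpha_i \ord_{E_i}(f)$ (concavity in $\alpha$), which gives concavity of $S$ (Corollary~\ref{Corollary. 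S is concave on cone}) but not convexity of $\lam_\max$.
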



A valuation $v$ over $(X,\D)$ is called {\it weakly special} if it is a log canonical place of some $\IQ$-complement of $(X,\D)$. We have the following characterization of weakly special valuations. 

\begin{thm}
\label{Theorem. local weakly special criterion}
A valuation $v$ over $(X,\D)$ is weakly special if and only if $v$ is quasi-monomial and $\lct(X,\D;\CF_v) = A_{X,\D}(v)$. 
\end{thm}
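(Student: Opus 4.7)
The plan is to prove the two implications of the equivalence separately. The forward direction reduces to a short valuative calculation using the defining $\IQ$-complement; the reverse direction, the main obstacle, requires producing a $\IQ$-complement out of the asymptotic data $\lct(X,\D;\CF_v)=A_{X,\D}(v)$.

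For the forward direction, let $\Gamma$ be a $\IQ$-complement of $(X,\D)$ with $v\in\LC(X,\D+\Gamma)$, so $A_{X,\D}(v)=v(\Gamma)$. That $v$ is quasi-monomial is a standard consequence of $A_{X,\D+\Gamma}(v)=0$: after a dlt modification of $(X,\D+\Gamma)$, any valuation with zero log discrepancy with respect to an lc pair lies in the dual complex of the coefficient-$1$ stratification, hence is quasi-monomial. The inequality $\lct(X,\D;\CF_v)\le A_{X,\D}(v)$ is immediate by testing with $v$ itself, since $v(\CF_v^m)\ge m$ implies $v(\CF_v)\ge 1$. For the reverse inequality, shrink $X$ around $x$ so that $\Gamma\sim_\IQ 0$ locally and write $N\Gamma=\div(f)$ for some $N\in\IZ_{\ge 1}$ and $f\in R$; then $v(f)=Nv(\Gamma)=NA_{X,\D}(v)$, so $f\in \CF_v^{NA_{X,\D}(v)}$. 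For any $w\in \Val_X^*$, the lc condition gives $w(\Gamma)\le A_{X,\D}(w)$, whence
\[
w\bigl(\CF_v^{NA_{X,\D}(v)}\bigr) \le w(f) = Nw(\Gamma) \le NA_{X,\D}(w).
\]
Dividing by $NA_{X,\D}(v)$ and letting $N\to\infty$ yields $w(\CF_v)\le A_{X,\D}(w)/A_{X,\D}(v)$; taking the infimum over $w$ gives $\lct(X,\D;\CF_v)\ge A_{X,\D}(v)$.

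For the reverse direction, assume $v$ is quasi-monomial and set $c:=A_{X,\D}(v)=\lct(X,\D;\CF_v)$. The strategy is to extract a $\IQ$-complement from the asymptotic ideals $\CF_v^m$. For each $m$, let $\lam_m:=\lct(X,\D;\CF_v^m)$, so $m\lam_m\to c$. Select a general effective $\IQ$-divisor $D_m\sim_\IQ 0$ (locally, using that $\CF_v^m$ is $\IQ$-principal after shrinking $X$ around $x$) with coefficient $\lam_m$ in $\CF_v^m$, so that $(X,\D+D_m)$ is lc and $v(D_m)\ge m\lam_m$. Then $A_{X,\D+D_m}(v)=c-v(D_m)$ tends to $0$. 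Applying a local version of boundedness of $\IQ$-complements for klt singularities, after passing to a subsequence $D_m$ converges to an effective $\IQ$-divisor $\Gamma\sim_\IQ 0$ with $(X,\D+\Gamma)$ lc; by lower semicontinuity together with the lc bound $v(\Gamma)\le c$, we obtain $v(\Gamma)=c$. Hence $v\in\LC(X,\D+\Gamma)$ and $v$ is weakly special.

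The main obstacle is the compactness step in the reverse direction: one must know that the approximate complements $\{D_m\}$ form a bounded family, and that the limit $\Gamma$ is still a genuine $\IQ$-complement realizing $v$ as an exact lc place, not merely an asymptotic one. This relies on a local version of Birkar's boundedness of complements applied to klt singularities; the forward direction, by contrast, is self-contained and reduces to an elementary manipulation of the filtration $\CF_v$ via the defining section of $N\Gamma$.
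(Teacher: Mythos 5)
Your forward direction is correct and is essentially the paper's argument: test the filtration against an arbitrary valuation $w\in\Val_X^*$ using the section $f$ cutting out $N\Gamma$, and deduce $\lct(X,\D;\CF_v)\ge A_{X,\D}(v)$. (The phrase ``letting $N\to\infty$'' is a slip of the pen---$N$ is fixed once $N\Gamma$ is Cartier---but since $w(\CF_v)=\inf_m w(\CF_v^m)/m$, the single inequality $w(\CF_v^{NA_{X,\D}(v)})\le NA_{X,\D}(w)$ already gives $w(\CF_v)\le A_{X,\D}(w)/A_{X,\D}(v)$; no limit is needed.)

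The reverse direction, however, has a genuine gap, and you have correctly identified where it lies but have not closed it. Your plan is: take general $\IQ$-effective divisors $D_m\sim_\IQ 0$ realizing $\lct(X,\D;\CF_v^m)$, so that $(X,\D+D_m)$ is lc and $A_{X,\D+D_m}(v)\to 0$, then pass to a subsequential limit $\Gamma$ and hope $v\in\LC(X,\D+\Gamma)$. Three things are missing. First, there is no a priori reason for $\{D_m\}$ to lie in a bounded family; these are general members of the ideals $\CF_v^m$ for $m\to\infty$, and nothing controls their complexity. Second, Birkar's boundedness of complements (with a prescribed lc place), in the form used elsewhere in this paper via \cite[Theorem~1.8]{Bir19}, presupposes that $v$ already is an lc place of \emph{some} $\IQ$-complement---which is precisely the conclusion sought, so one cannot invoke it directly. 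Third, even if a limit $\Gamma$ existed in some reasonable topology, the claim $v(\Gamma)=\lim_m v(D_m)=A_{X,\D}(v)$ requires a continuity statement about $v$ against a degenerating family of divisors that is not automatic and is not argued.

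The paper's proof of the reverse direction avoids all of this by a constructive birational-geometric argument instead of a compactness argument. Using the ACC bound of \cite{HMX14} to fix a threshold $\vep$, it applies Diophantine approximation \cite[Lemma~2.7]{LX18} to produce divisorial valuations $v_1,\dots,v_r$ spanning a quasi-monomial cone containing $v$ with $A_{X,\D+\fa_\bu^c}(E_i)<\vep$; it then extracts these divisors on a $\IQ$-factorial model $Y\to X$ by \cite[Corollary~1.4.3]{BCHM10}, checks that $(Y,\pi_*^{-1}\D+E)$ is qdlt and that $Y$ is of Fano type over $X$, runs a $-(K_Y+\pi_*^{-1}\D+E)$-MMP over $X$ to obtain a model with nef (hence semiample) boundary, and produces a $\IQ$-complement via Bertini which then pushes down to a $\IQ$-complement $\Gamma$ of $(X,\D)$ with all the $E_i$---hence $v$---as lc places. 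The key idea you are missing is this qdlt Fano type model plus MMP step, which manufactures the exact complement directly rather than trying to extract it from a sequence of approximate ones.
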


\begin{proof}
For the `only if' part, we have naturally $v(\CF_v)=1$ and $\lct(X,\D;\CF_v) \le A_{X,\D}(v)$. Let $\Gamma$ be a $\IQ$-complement of $(X,\D)$ such that $v\in\LC(X,\D+\Gamma)$, then $v(\Gamma)=A_{X,\D}(v)$. Choose $k\in\IN$ such that $k\Gamma$ is Cartier, then $k\Gamma = \div(s)$ for some $s \in \CF_{v}^{kA_{X,\D}(v)}$. Hence $\lct(X,\D;\CF_v^{kA_{X,\D}(v)}) \ge \lct(X,\D;k\Gamma) \ge \frac{1}{k}$, so $\lct(X,\D;\CF_v) \ge A_{X,\D}(v)$. 

For the `if' part, it was already proved by \cite[Proof of Lemma 3.2]{XZ22} (which only used the condition $\lct(X,\D+D;\fa_\bu(v_0)) = A_{X,\D+D}(v_0)$ on $v_0$, instead of $v_0$ being the normalized volume minimizer). 
For the sake of completeness, we recall the argument. 

By \cite{HMX14}, there exists $\vep>0$ depending only on $\dim X$ and coefficients of $\D$ such that, for any birational morphism $\pi:Y\dashrightarrow X$ and any reduced divisor $E$ on $Y$, the pair $(Y,\pi_*^{-1}\D+(1-\vep)E)$ is log canonical if and only if $(Y,\pi_*^{-1}\D+E)$ is.

We denote by $\fa_\bu =\fa_\bu(v)$ and $c=A_{X,\D}(v)$. 
Since $v\in \Val_{X,o}$ is a quasi-monomial valuation, there exists a quasi-monomial simplicial cone $\sigma\seq \Val_{X,o}$ containing $v$. We may assume that $\sigma$ has minimal dimension. The functions $w\mapsto A_{X,\D}(w)$ and $w\mapsto w(\fa_\bu^c)$ are linear and concave on $\sigma$ respectively. Hence the function $A_{X,\D+\fa_\bu^c}(-): \sigma \to \IR$, 
\begin{eqnarray}
\label{Eqnarray: function A(-) on sigma}
w\mapsto A_{X,\D+\fa_\bu^c}(w) = A_{X,\D}(w) - w(\fa_\bu^c)
\end{eqnarray}
is convex on $\sigma$. In particular, it is locally Lipschitz on $\sigma$. Let $|\cdot|$ be the Euclidean norm on $\sigma$. There exists an open neighbourhood $U\seq \sigma$ of $v$ and $C=C(U)>0$ such that 
\begin{eqnarray*}
|A_{X,\D+\fa_\bu^c}(w) -
   A_{X,\D+\fa_\bu^c}(v) |
\le C|w-v|, \quad \forall w\in U. 
\end{eqnarray*}
On the other hand, $A_{X,\D+\fa_\bu^c}(w)\ge 0$ for any $w\in U$ since $v$ compute $\lct(X,\D; \fa_\bu^c)=1$. Hence
\begin{eqnarray}
0 \le 
A_{X,\D+\fa_\bu^c}(w) = 
|A_{X,\D+\fa_\bu^c}(w) -
   A_{X,\D+\fa_\bu^c}(v) |
\le C|w-v|. 
\end{eqnarray}
By Diophantine approximation \cite[Lemma 2.7]{LX18}, there exist divisorial valuations $v_1,\cdots, v_r$ and positive integers $q_1,\cdots,q_r,c_1,\cdots,c_r$ such that 
\begin{itemize}
\item $\{v_1,\cdots,v_r\}\seq U$ spans a quasi-monomial simplicial cone in $\sigma$ containing $v$; 
\item for any $1\le i\le r$, there exists a prime divisor $E_i$ over $X$ such that $q_iv_i=c_i\ord_{E_i}$; 
\item $|v_i-v|< \frac{\vep}{2Cq_i}$ for any $1\le i\le r$. 
\end{itemize}
In particular, 
\begin{eqnarray}
A_{X,\D+\fa_\bu^c}(E_i) 
= \frac{q_i}{c_i}\cdot A_{X,\D+\fa_\bu^c}(v_i) 
\le \frac{q_i}{c_i}\cdot C|v_i-v|
< \frac{q_i}{c_i}\cdot C \cdot \frac{\vep}{2 C q_i} 
\le \frac{\vep}{2}. 
\end{eqnarray}
By assumption $(X,\D+\fa_\bu^c)$ is log canonical, hence we may choose $0<\vep'\ll 1$ and $m\gg 0$ such that $(X,\D+\frac{c-\vep'}{m}\fa_m)$ is klt and   
\begin{eqnarray*}
a_i = A_{X,\D+\frac{c-\vep'}{m}\fa_m} (E_i) 
&=& c\cdot (\ord_{E_i}(\fa_\bu)-\frac{1}{m}\ord_{E_i}(\fa_m)) \\
&+& \frac{\vep'}{m}\ord_{E_i}(\fa_m)
+ A_{X,\D+\fa_\bu^c}(E_i) 
\,\,\,<\,\,\, \vep. 
\end{eqnarray*}

By \cite[Corollary 1.4.3]{BCHM10}, there exists a $\IQ$-factorial birational model $\pi:Y\to X$ extracts precisely $E_1,\cdots, E_r$. Choose general $s\in\fa_m$ and let $D_m=\frac{1}{m} \div(s)$. Then 
\begin{eqnarray} 
\label{Eqnarray: crepant pullback 1}
K_Y+\pi_*^{-1}(\D+(c-\vep')D_m)+\sum_{i=1}^r (1-a_i)E_i   \sim_{\IQ} \pi^*(K_X+\D+(c-\vep')D_m). 
\end{eqnarray}
In particular, $\pi^*(K_X+\D+(c-\vep')D_m)\ge K_Y+\pi_*^{-1}\D+(1-\vep)E$. Since $(X,\D,(c-\vep')D_m)$ is klt, the pair $(Y,\pi_*^{-1}\D+(1-\vep)E)$ is log canonical. Hence $(Y,\pi_*^{-1}\D+E)$ is also log canonical by our choice of $\vep$. 

Note that $Y$ is of Fano type over $X$ by (\ref{Eqnarray: crepant pullback 1}). We may run a $-(K_Y+\pi_*^{-1}\D+E)$-MMP over $X$ and get a good minimal model $\phi: Y\dashrightarrow \oY$ such that $-(K_\oY+\opi_*^{-1}D+\oE)$ is nef over $X$, where $\opi: \oY\to X$ is the induced birational morphism and $\oE = \phi_*E$. 
Since $(X,\D+(c-\vep')D_m)$ is log canonical, so is $(\oY,\opi_*^{-1}\D+(1-\vep)\oE)$. By our choice of $\vep$, we see that $(\oY,\opi_*^{-1}\D+\oE)$ is also log canonical. The MMP process shows that 
\begin{eqnarray}
\label{Eqnarray. (-K)-MMP A_X(v)-decending}
A_{Y,\pi_*^{-1}\D+E}(F)
\ge
A_{\oY,\opi_*^{-1}\D+\oE}(F) \ge 0, 
\end{eqnarray}
for any prime divisor $F$ over $Y$, and the first equality holds if and only if $\phi$ is an isomorphism at the generic point of $C_Y(F)$. 
Hence $\phi$ is an isomorphism at the generic point of each log canonical center of $(Y,\pi_*^{-1}\D+E)$. In particular, $\phi$ is an isomorphism at any stratum of $E$. Hence
\begin{eqnarray}
\label{Eqnarray. crepant pullback of (-K)-MMP}
\phi^*(K_\oY+\opi_*^{-1}\D+\oE) - (K_Y+\pi_*^{-1}\D+E) = \sum_i(1-A_{Y,\opi_*^{-1}\D+\oE}(F_i))(F_i) \ge 0,  
\end{eqnarray}
where $F_i$ are prime divisors extracted by $\phi$.

Since $Y$ is of Fano type over $X$ and $\phi: Y\dashrightarrow \oY$ is a birational contraction, we have that $\oY$ is also of Fano type over $X$. On the other hand, $-(K_{\oY}+\opi_*^{-1}\D+\oE)$ is nef over $X$. Hence $-(K_{\oY}+\opi_*^{-1}\D+\oE)$ is semiample over $X$ and $(\oY,\opi_*^{-1}\D+\oE)$ admits a $\IQ$-complement by Bertini theorem. By (\ref{Eqnarray. crepant pullback of (-K)-MMP}), $(Y,\pi_*^{-1}\D+E)$ also admits a $\IQ$-complement $\Theta$. Then $\Gamma=\pi_*\Theta$ is a $\IQ$-complement of $(X,\D)$ such that $E_i\in \LC(X,\D+\Gamma)$. In particular, $v\in \LC(X,\D+\Gamma)$. 
\end{proof}

The following theorem gives the specialty of delta minimizers. 

\begin{thm}
\label{Theorem. delta minimizer is very special}
Let $(X,\D,\xi_0)$ be a log Fano cone singularity of dimension $n$. If $v_0\in\Val_{X,x}^{\IT,*}$ minimizes $\delta=\delta_\IT(X,\D,\xi_0)$, then for any $\IT$-invariant effective $\IQ$-Cartier $\IQ$-divisor $D$ on $X$ such that $\wt_{\xi_0}(D)<\frac{\delta A(\xi_0)}{n}$, we have  
\begin{eqnarray*}
\lct(X,\D+D;\CF_{v_0}) = A_{X,\D+D}(v_0). 
\end{eqnarray*} 
In other words, $v_0$ minimizes $\lct(X,\D+D;\CF_{v_0})$. 
\end{thm}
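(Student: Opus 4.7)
The plan is to show that every $\IT$-invariant valuation $w\in\Val_{X,x}^{\IT,*}$ satisfies $A_{X,\D+D}(w)\ge A_{X,\D+D}(v_0)\,w(\CF_{v_0})$. Taking the infimum over such $w$ then gives $\lct(X,\D+D;\CF_{v_0})\ge A_{X,\D+D}(v_0)$, while the reverse inequality is automatic from $w=v_0$. I first reduce to the semistable situation: Lemma \ref{Lemma: delta le 1} gives $\delta\le 1$, and Remark \ref{Remark. non-existence of delta minimizer} shows that the existence of the minimizer $v_0$ forces $\delta\ge 1$, hence $\delta=1$. Theorem \ref{Theorem. D-ss equiv. delta >= 1} then yields Ding-semistability, and in particular $A_{X,\D}(v_0)=A(\xi_0)\,S(\xi_0;v_0)$. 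Applying Ding-semistability to the filtration $\CF_D$ together with Lemma \ref{Lemma. S-invariant of effective divisor} gives $\lct(X,\D;D)\ge A(\xi_0)/(n\,\wt_{\xi_0}(D))>1$ by the hypothesis, so $(X,\D+D)$ is klt.

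The key step is the following relative inequality: for every $\IT$-invariant $w\in\Val_{X,x}^{\IT,*}$,
\[
w(D)-v_0(D)\,w(\CF_{v_0})\;\le\;n\,\wt_{\xi_0}(D)\,\bigl(S(\xi_0;w)-S(\xi_0;v_0)\,w(\CF_{v_0})\bigr).
\]
To prove it I choose $r\in\IN$ with $rD$ Cartier and a $\IT$-semi-invariant $f\in R_\alpha$ with $\div(f)=rD$, and set $L:=\wt_{\xi_0}(f)=r\,\wt_{\xi_0}(D)$. Multiplication by $f$ is an injection $R/\CF_{\wt_{\xi_0}}^{m-L}\hookrightarrow R/\CF_{\wt_{\xi_0}}^{m}$ that shifts the $v_0$- and $w$-orders by $v_0(f)$ and $w(f)$. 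Pick a $\IT$-invariant basis of $R/\CF_{\wt_{\xi_0}}^{m}$ simultaneously compatible with $\CF_{v_0}$ and $\CF_w$, extending a compatible basis of the image $f\cdot R/\CF_{\wt_{\xi_0}}^{m-L}$. Set $U_m:=\tilde S_m(\xi_0;w)-w(\CF_{v_0})\,\tilde S_m(\xi_0;v_0)$ and $N_m:=\dim R/\CF_{\wt_{\xi_0}}^m$. The image basis elements contribute $(w(f)-w(\CF_{v_0})v_0(f))\,N_{m-L}+U_{m-L}$ to $U_m$, while the contribution of the complementary basis vectors is nonnegative because every $g$ lies in $\CF_{v_0}^{v_0(g)}$ and $w(\CF_{v_0}^{j})\ge j\,w(\CF_{v_0})$ gives $w(g)\ge v_0(g)\,w(\CF_{v_0})$. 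This yields the recursion
\[
U_m\;\ge\;(w(f)-w(\CF_{v_0})v_0(f))\,N_{m-L}+U_{m-L}.
\]
Iterating, dividing by $\tilde S_m(\xi_0;\xi_0)$, and letting $m\to\infty$ using the asymptotics $\tilde S_m(\xi_0;\xi_0)\sim n\,\mult(\xi_0)\,m^{n+1}/(n+1)!$ from (\ref{Eqnarray. tS/m^(n+1) limit = n vol }) and $N_m\sim\mult(\xi_0)\,m^n/n!$ extracts the factor $1/(nL)$ on the right; after substituting $w(f)=rw(D)$, $v_0(f)=rv_0(D)$, and $L=r\wt_{\xi_0}(D)$, the $r$'s cancel and the claim follows.

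With the relative inequality in hand, the Ding-semistability bound $A_{X,\D}(w)\ge A(\xi_0)\,S(\xi_0;w)$ and the monotonicity $S(\xi_0;w)\ge S(\xi_0;v_0)\,w(\CF_{v_0})$ (from $\CF_{v_0}\seq\CF_w$ after rescaling $w$ so that $w(\CF_{v_0})=1$) give
\[
A_{X,\D+D}(w)\ge(A(\xi_0)-n\wt_{\xi_0}(D))\,S(\xi_0;w)+w(\CF_{v_0})\bigl(n\wt_{\xi_0}(D)\,S(\xi_0;v_0)-v_0(D)\bigr)\ge A_{X,\D+D}(v_0)\,w(\CF_{v_0}),
\]
where the final step uses $A(\xi_0)-n\wt_{\xi_0}(D)>0$. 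The main obstacle is the recursive estimate for $U_m$: the joint compatibility of the basis with both $\CF_{v_0}$ and $\CF_w$, the nonnegativity of the complementary contribution, and the asymptotic analysis need to be carried out carefully to extract exactly the factor $1/(nL)$ matching the quantitative threshold $\wt_{\xi_0}(D)<A(\xi_0)/n$.
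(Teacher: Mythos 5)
The overall architecture of your argument is sound: you reduce to $\delta=1$ (which is legitimate, since by Remark~\ref{Remark. non-existence of delta minimizer} the existence of a minimizer forces $\delta\ge 1$, hence $\delta=1$ by Lemma~\ref{Lemma: delta le 1}), and the final combination of the claimed ``relative inequality'' with Ding-semistability, the monotonicity $S(\xi_0;w)\ge S(\xi_0;v_0)w(\CF_{v_0})$, and $A_{X,\D}(v_0)=A(\xi_0)S(\xi_0;v_0)$ correctly yields $A_{X,\D+D}(w)\ge A_{X,\D+D}(v_0)\,w(\CF_{v_0})$, which is exactly the target inequality that the paper also arrives at. The route is genuinely different from the paper's: the paper constructs an $m$-basis type divisor $\Gamma$ compatible with the \emph{two} filtrations $\CF_{v_0}$ and $\CF_D$ (which Abban--Zhuang's lemma always permits) and bounds $v(\delta_m\Gamma-D)$ directly, whereas you try to prove the relative inequality by a recursion on $\tS_m$.

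However, the key step of that recursion has a genuine gap. To get $U_m=\sum_i\bigl(w(f_i)-w(\CF_{v_0})v_0(f_i)\bigr)$ you need the basis $\{f_i\}$ of $R/\CF_{\wt_{\xi_0}}^m$ to be compatible with \emph{both} $\CF_{v_0}$ and $\CF_w$, and to split off the contribution of $W=f\cdot R/\CF_{\wt_{\xi_0}}^{m-L}$ you additionally need a subset of $\{f_i\}$ to be a basis of $W$ (pulled back from a basis of $R/\CF_{\wt_{\xi_0}}^{m-L}$ compatible with $\CF_{v_0}$ and $\CF_w$). That is a simultaneous compatibility requirement with two filtrations \emph{and} a fixed subspace; Abban--Zhuang's lemma only guarantees compatibility with two filtrations, and such a basis need not exist. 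Concretely: take $R=\Ik[x,y]$ with $\IT=\IG_m$ of weight $(1,1)$, $\xi_0=1$, $f=x$ (so $W=\span(x)\seq V_1=\span(x,y)$), $v_0$ the monomial valuation with $v_0(x-y)=2$, $v_0(y)=1$, and $w$ the monomial valuation with $w(x+y)=2$, $w(y)=1$. Then every basis of $V_1$ compatible with both $\CF_{v_0}$ and $\CF_w$ must be $\{x-y,x+y\}$ up to scalars, which does not contain a basis of $W=\span(x)$. So the recursion $U_m\ge\bigl(w(f)-w(\CF_{v_0})v_0(f)\bigr)N_{m-L}+U_{m-L}$ is not established by this argument, and without it the relative inequality (and hence the whole proof) is unproved. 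The paper avoids this obstruction by never asking the basis to be compatible with $\CF_w$: for a \emph{fixed} basis one always has $v(\Gamma)\le S_m(\xi_0;v)$ for arbitrary $v$, so compatibility is only needed with $\CF_{v_0}$ and $\CF_D$, which is exactly what Abban--Zhuang provides.
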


\begin{proof}
Since $v_0(\CF_{v_0})=1$, we naturally have `$\le$'. Denote by $c=A_{X,\D+D}(v_0)$. It suffices to show that $\frac{A_{X,\D+D}(v)}{v(\CF_{v_0})} \ge c$, that is, $A_{X,\D}(v)\ge v(D)+ c\cdot v(\CF_{v_0})$ for any $v\in\Val_{X,x}^{\IT,*}$. 

Rescale $\xi_0$ such that $A(\xi_0)=1$. By definition of $\delta=\delta_\IT(X,\D,\xi_0)$, we have  
\begin{eqnarray}
\frac{A_{X,\D}(v)}{S(\xi_0;v)}
\ge \frac{A_{X,\D}(v_0)}{S(\xi_0;v_0)} 
= \delta.  
\end{eqnarray} 
Let $\Gamma$ be a $\IT$-invariant $m$-basis type divisor of $\xi_0$ compatible with both $\CF_{v_0}$ and $\CF_D$. Then we have 
\begin{eqnarray}
v_0(\Gamma) &\ge& S_m(\xi_0;v_0), \\
\Gamma &\ge& S_m(\xi_0;\CF_D)\cdot D. 
\end{eqnarray} 
Note that $S_m(\xi_0;v_0) \to S(\xi_0;v_0) = \delta^{-1} A_{X,\D}(v_0)$ 
as $m\to \infty$. Choose a sequence of positive numbers $\delta_m$ such that $\delta_m\to \delta$ and $\delta_m S_m(\xi_0;v_0) \ge \delta S(\xi_0;v_0)$. We have $\delta_m \Gamma \ge D$ for $m\gg0$ since 
\begin{eqnarray*}
\delta_m S_m(\xi_0;\CF_D) \to \delta S(\xi_0;\CF_D) = \frac{\delta}{n\cdot \wt_{\xi_0}(D)} >1, 
\end{eqnarray*} 
where the equality follows from Lemma \ref{Lemma. S-invariant of effective divisor}. 
By our choice of $\delta_m$, we have 
\begin{eqnarray*}
v_0(\delta_m\Gamma - D) \ge \delta S(\xi_0;v_0) -v_0(D) = A_{X,\D}(v_0) -v_0(D) = c. 
\end{eqnarray*} 
Hence we see that $v(\delta_m\Gamma - D) \ge c\cdot v(\CF_{v_0})$. 

On the other hand, note that $v(\Gamma) \le S_m(\xi_0;v)$. Fix $\vep>0$. For $m\gg0$, we have $v(\delta_m\Gamma) \le (1+\vep)\delta S(\xi_0;v) \le (1+\vep)A_{X,\D}(v)$. Hence 
\begin{eqnarray*}
(1+\vep)A_{X,\D}(v) \ge v(\delta_m\Gamma) \ge v(D) + c\cdot v(\CF_{v_0}). 
\end{eqnarray*} 
We conclude by letting $\vep\to 0$. 
\end{proof}

\begin{proof}[Proof of Theorem \ref{Theorem. Local optimal destablization}]
We first show that the $\IT$-invariant quasi-monomial minimizer $v_0$ of $\delta_\IT(X,\D,\xi_0)$ is a Koll\'ar valuation. 
Let $\pi': (Y',E')\to (X,\D)$ be a $\IT$-equivariant log resolution whose exceptional locus supports a $\IT$-invariant effective $\pi$-antiample $\IQ$-divisor $F$, and $v_0\in \QM(Y',E')$. Choose $0<\vep\ll1$ such that $L=-\pi^*(K_X+\D)-\vep F$ is ample and $\wt_{\xi_0}(\vep F) < \frac{\delta A(\xi_0)}{n}$. Hence there exists a $\IT$-invariant effective $\IQ$-divisor $G\sim_\IQ L$ such that $v_0(G)=0$. Let $D=\pi_*G\sim_{\IQ} -(K_X+\D)$. Then $D$ is $\IT$-invariant, $\pi_*^{-1}D \ge G$ and $\wt_{\xi_0}(D)=\wt_{\xi_0}(\vep F) < \frac{\delta A(\xi_0)}{n}$. By Theorem \ref{Theorem. delta minimizer is very special} and Theorem \ref{Theorem. local weakly special criterion}, there exists a $\IQ$-complement $\Gamma\ge D$ of $(X,\D)$ such that $v_0$ is a log canonical place of $(X,\D+\Gamma)$. Hence $\Gamma$ is a special $\IQ$-complement with respect to the log smooth model $\pi':(Y',E')\to (X,\D)$, and $v_0\in \LC(X,\D+\Gamma)$ is a Koll\'ar valuation by Theorem \ref{Theorem. XZ22 higher rank finite generation} $(2)\Rightarrow (3)$. 

Next we find a Koll\'ar component minimizing $\delta_\IT(X,\D,\xi_0)$. 
Since $v_0$ is a $\IT$-invariant Koll\'ar valuation, there exists a $\IT$-invariant special quasi-monomial cone $\sigma$ containing $v_0$. Note that 
\begin{eqnarray*}
\delta=\frac{A_{X,\D}(v_0)}{S(\xi_0; v_0)} \le \frac{A_{X,\D}(v)}{S(\xi_0; v)}, \quad \forall v\in\sigma .
\end{eqnarray*}
The equality holds since both $A_{X,\D}(-)$ and $S(\xi_0;-)$ are linear on $\sigma$ by \cite[Proposition 5.1 (ii)]{JM12} and Corollary \ref{Corollary. S is linear on special cone}. We conclude by choosing $v\in\sigma_\IQ$, which is a Koll\'ar component. 
\end{proof}

Now we are ready to prove the first main theorem of the paper. 

\begin{proof}[Proof of Theorem \ref{Theorem: Intro. Ding-ps for special test configurations implies Ding-ps}]
Assume that $(X,\D,\xi_0)$ is $\IT$-equivariantly Ding-semistable. Let $(\CX,\D_\CX,\xi_0;\eta)$ be a $\IT$-equivariant normal test configuration of $(X,\D,\xi_0)$ and $\CF$ be the corresponding $\IT$-invariant $\IN$-filtration. Then $\CF$ is saturated by \cite[Definition 2.20]{LWX18} and \cite[Lemma 3.19]{BLQ22}. There exists a $\IT$-invariant quasi-monomial valuation $v$ minimizing $\lct(\CF)$ by \cite{Xu19}. Rescale $v$ such that $v(\CF)=1$. Then $\CF\seq \CF_v$ and 
\begin{eqnarray}
\label{Eqnarray. lct(a_bu) = lct(a_bu(v)) = A(v)}
A_{X,\D}(v)=\lct(\CF)\le\lct(\CF_v)\le A_{X,\D}(v).
\end{eqnarray}
Hence $\lct(\CF)=A_{X,\D}(v)$ and $S(\xi_0;\CF) \le S(\xi_0;v)$. We see that $\BD(\CF)\ge \BD(v)\ge 0$ since $(X,\D,\xi_0)$ is $\IT$-equivariantly Ding-semistable. 

Assume that $(X,\D,\xi_0)$ is not $\IT$-equivariantly Ding-polystable for normal test configurations, then we may assume that $(\CX,\D_\CX,\xi_0;\eta)$ is not of product-type and has vanishing $\BD$. Hence $\CF$ is not of product type. We have 
$$0= \BD(\CX,\D_\CX,\xi_0;\eta) \ge \BD(\CF)\ge \BD(v)\ge 0. $$
Hence $\BD(\CF)=\BD(v)= 0$, and $S(\xi_0;\CF) = S(\xi_0;v)$. Since $\CF\seq \CF_v$, we see that $\CF = \CF_v$ by Lemma \ref{Lemma. saturated fa seq fb, S(fa)=S(fb) implies fa=fb}, hence $v$ is not of product type. Then $v$ minimizes $\delta_\IT(X,\D,\xi_0)=1$, and it is a Koll\'ar valuation by Theorem \ref{Theorem. Local optimal destablization}. We can perturb $v$ and get a $\IT$-invariant Koll\'ar component $E$ minimizing $\delta_\IT(X,\D,\xi_0)=1$, which is not of product type since this holds for $v$. It induces a non-product special test configuration of $(X,\D,\xi_0)$ with vanishing $\BD$. We conclude that $(X,\D,\xi_0)$ is not Ding-polystable for special test configurations. 
\end{proof}

\section{Relations between K/Ding-stability}
\label{Section. Relations between K/Ding-stability}

In order to show that all the four polystability conditions in Definition \ref{Definition. K/Ding polystability} are equivalent, we still need to establish the inequality $\Fut\ge\BD$ for any normal test configurations similar to \cite{LW24}. 
We compare the Fut/Ding invariants of quasi-regular log Fano cones with the Fut/Ding invariants of the corresponding base Fano varieties in this section. We will prove the following theorem.  

\begin{thm}
Let $x\in(X,\D)$ be a klt singularity with a good $\IT$-action. For any $\xi_0\in\Bt_\IR^+$ and any test configuration $(\CX,\D_\CX,\xi_0;\eta)$ of $(X,\D,\xi_0)$, we have 
$\Fut(\CX,\D_\CX,\xi_0;\eta) \ge \BD(\CX,\D_\CX,\xi_0;\eta). $
\end{thm}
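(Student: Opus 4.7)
The plan is to reduce to the classical inequality $\Fut \ge \BD$ for normal test configurations of log Fano pairs by descending to the quotient in the quasi-regular case, and then to pass to general $\xi_0 \in \Bt_\IR^+$ by continuity.

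First I would treat the quasi-regular case where $\xi_0 \in N_\IQ \cap \Bt_\IR^+$. After rescaling, $\xi_0$ generates a $\IG_m$-action on $X$ and $(x\in(X,\D))$ becomes the orbifold cone over a log Fano pair $(V,\D_V)$ with polarization $L$ proportional to $-(K_V+\D_V)$. Since the $\IG_m$-action generated by $\xi_0$ extends to $\CX$ and commutes with $\eta$, any $\IT$-equivariant normal test configuration $(\CX,\D_\CX,\xi_0;\eta)$ of $(X,\D,\xi_0)$ descends to a normal test configuration $(\CV,\D_\CV,\CL;\bar\eta)$ of $(V,\D_V,L)$. Using the weight decomposition $R=\bigoplus_{\alpha\in M} R_\alpha$ and identifying $R$ with the section ring $\bigoplus_m H^0(V, mL)$ (via the rational character group), one verifies that, up to a positive multiplicative constant $c(\xi_0)$, both $\Fut$ and $\BD$ of $(\CX,\D_\CX,\xi_0;\eta)$ equal the corresponding invariants of $(\CV,\D_\CV,\CL;\bar\eta)$. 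The inequality then follows from Berman's classical result $\Fut \ge \BD^{\rm NA}$ for normal test configurations of log Fano pairs.

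Next I would pass to arbitrary $\xi_0 \in \Bt_\IR^+$ by perturbation. Crucially, the filtration $\CF = \CF_{(\CX,\D_\CX;\eta)}$ is independent of the polarization $\xi_0$. Pick a sequence $\xi_0^{(k)} \in N_\IQ \cap \Bt_\IR^+$ with $\xi_0^{(k)} \to \xi_0$. On the Ding side, $\lct(\CF)$ is independent of $\xi_0$; the linearity of $A(\cdot)$ on $\Bt_\IR^+$ from Lemma \ref{Lemma. A(v_xi)=A(v)+A(xi)} gives $A(\xi_0^{(k)}) \to A(\xi_0)$; and the integral formula (\ref{Eqnarray. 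Convergence of S_m}) together with continuity of the relative multiplicities $\mult(\CF_{\wt_{\xi_0}}; t^{-1}\CF)$ in $\xi_0$ yields $S(\xi_0^{(k)};\CF) \to S(\xi_0;\CF)$. On the Futaki side, the function $\xi \mapsto \vol_{\CX_0}(\xi)$ and its directional derivatives are real-analytic on the Reeb cone of $\CX_0$ (which contains $a\xi_0 + \eta$ for $a\gg0$), so $\Fut(\CX,\D_\CX,\xi_0^{(k)};\eta) \to \Fut(\CX,\D_\CX,\xi_0;\eta)$. Taking the limit in the inequality established in the first step completes the proof.

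The main obstacle will be the quasi-regular reduction: pinning down the precise compatibility of $\BD$ and $\Fut$ under the quotient $X \dashrightarrow V$. This requires carefully matching the weight-graded structure of $R$ with the section-ring decomposition of $(V,L)$, identifying the lc threshold and $S$-invariant of $\CF$ with those on the anti-canonical ring of $V$ (as alluded to after the definition of $\BD_{X,\D,\xi_0}$ in the introduction), and checking that the Berman Futaki invariant of $(\CV,\D_\CV,\CL;\bar\eta)$ agrees with the derivative of $\vol_{\CX_0}$ computed on the cone side. Once this dictionary is set up, the continuity step is essentially routine bookkeeping about real-analytic functions on the Reeb cone.
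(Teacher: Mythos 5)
Your proposal follows essentially the same approach as the paper's Corollary \ref{Corollary. Fut ge Ding}: reduce to the quasi-regular case ($\xi_0$ rational) by descending to the base log Fano pair $(V,\D_V)$, match the Ding and Futaki invariants of the cone test configuration with those of the induced test configuration on $V$ (in the paper this is Corollary \ref{Corollary. D=D} built on the identity $\lct(X,\D;\CF)=\mu_{V,\D_V}(\CF)$, together with Theorem \ref{Theorem. Fut=Fut}), apply the classical $\Fut\ge\BD$ for log Fano pairs, and finally pass to irrational $\xi_0$ by continuity in the Reeb vector (Theorem \ref{Theorem. Fut is continuous w.r.t xi_0}). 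Your continuity justifications (integral formula for $S$, real-analyticity of $\vol_{\CX_0}$) are a sketch of what the paper delegates to \cite{Wu22,LW24}, but the overall structure and the key reduction are the same.
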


\subsection{Log Fano varieties}

We first recall the basic notions in the algebraic K-stability theory of log Fano varieties, see for example \cite{Xu24}. 
Let $(V,\D_V)$ be a log Fano variety, that is, $(V,\D_V)$ is a projective klt pair with ample anti-canonical divisor $L = -(K_V+\D_V)$. Choose a positive integer $l_0$ such that $l_0(K_V+\D_V)$ is Cartier. We denote by $R(L) = \oplus_{m\in l_0\IN} R_m(L)$ the anti-canonical ring of $(V,\D_V)$, where $R_m(L)=H^0(V,mL)$.

\begin{defi}\rm
\label{Definition. filtration of Fano varieties}
A \textit{filtration} $\CF$ on $R(L)$ is a collection of subspaces $\{\CF^\lam R_m\seq R_m\}_{\lam\in\IR}$ such that:
\begin{enumerate}
\item $\CF^\lam R_m \supseteq \CF^{\lam'}R_m $ when $\lam \le \lam'$, 
\item  $\CF^\lam R_m=\CF^{\lam-\epsilon}R_m$ when $0<\epsilon \ll 1$, 
\item $\CF^\lam R_m \cdot \CF^{\lam'}R_{m'} \subseteq \CF^{\lam+\lam'}R_{m+m'}$, and
\item $\CF^\lam R_m = R_m$ for $\lam \ll 0$ and $\CF^\lam R_m = 0$ for $\lam \gg 0$. 
\end{enumerate}
It is called {\it linearly bounded} if moreover there is a constant $C>0$ such that $\CF^{-mC}R_m =R_m$ and $\CF^{mC}R_m=0$ for any $m\in l_0\IN$. 
\end{defi}

We define 
\begin{eqnarray*} 
\lam^{(m)}_\max(\CF) 
&:=& \max\{\lam\in \IR\mid \CF^{\lam}R_m \ne 0\}, \\
S_m(\CF) 
&:=&
\sum_{\lam}\frac{\lam}{m} \cdot 
\frac{\dim \gr_\CF^{\lam} R_m}{\dim R_m}. \qquad
\end{eqnarray*} 
By \cite{BJ20}, the limits exist as $m\to \infty$, 
\begin{eqnarray*} 
\lam_\max(\CF) 
&:=& \mathop{\sup}_{m\in \IN} \frac{\lam^{(m)}_\max}{m} 
= \mathop{\lim}_{m\rightarrow \infty} \frac{\lam^{(m)}_\max}{m} , \\
S_{V,\D_V}(\CF) &:=& \mathop{\lim}_{m\to \infty} S_m(\CF). 
\end{eqnarray*}

For any filtration $\CF$ on $R(L)$ and $a\in\IR_{>0}, b\in \IR$, we define the $a$-{\it rescaling} and $b$-{\it shift} of $\CF$ by
$$(a\CF)^\lam R_m := \CF^{\lam/a}R_m, \quad
\CF(b)^\lam R_m := \CF^{\lam-bm} R_m. $$

\begin{defi}[log canonical slopes]\rm 
\label{Definition: log canonical slope}
Let $L=-(K_V+\D_V)$ and $\CF$ be a linearly bounded filtration on $R(L)$. The {\it base ideal sequence} $I^{(t)}_\bu = \{I_{m,mt}\}_{m\in l_0\IN}$ of $\CF$ is defined by 
\begin{eqnarray*}
I_{m,mt}
\,\,\,=\,\,\,
I_{m,mt}(\CF)
\,\,\,:=\,\,\,{\rm im}
\Big(\CF^{mt}H^0(X,mL)\otimes \CO(-mL)\to \CO\Big)
\end{eqnarray*}
for any $m\in l_0\IN$ and $t\in\IR$. The {\it log canonical slope} of $\CF$ is defined by
\begin{eqnarray*}
\mu(\CF)
\,\,\,=\,\,\, 
\mu_{X,\D}(\CF)
\,\,\,:=\,\,\,
\sup\Big\{
t\mid \lct(X,\D;I^{(t)}_\bu)\ge 1
\Big\}. 
\end{eqnarray*}
Note that $I^{(t)}_\bu=0$ (hence $\lct(X,\D;I^{(t)}_\bu)=0$) when $t>\lam_\max(\CF)$. We have $\mu(\CF)\le \lam_\max(\CF)$. 
\end{defi}

\begin{lem}
For any $a\in\IR_{>0}, b\in \IR$, we have $\mu(a\CF)=a\mu(\CF)$ and $\mu(\CF(b)) =\mu(\CF)+b$. 
\end{lem}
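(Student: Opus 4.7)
The plan is to derive both identities directly from the definitions by tracking how rescaling and shifting act on the base ideal sequences.

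First I would unpack the base ideals. For the $a$-rescaling, one has $(a\CF)^{mt}R_m = \CF^{mt/a}R_m$ by definition, so
\[
I_{m,mt}(a\CF) = \mathrm{im}\Bigl(\CF^{mt/a}H^0(V,mL)\otimes \CO(-mL)\to \CO\Bigr) = I_{m,m(t/a)}(\CF).
\]
Hence the entire graded ideal sequences coincide, $I^{(t)}_\bu(a\CF) = I^{(t/a)}_\bu(\CF)$, and therefore $\lct(V,\D_V;I^{(t)}_\bu(a\CF)) = \lct(V,\D_V;I^{(t/a)}_\bu(\CF))$. The substitution $s = t/a$ (valid because $a>0$ makes $t\mapsto t/a$ a bijection on $\IR$) gives
\[
\mu(a\CF) = \sup\{t : \lct(V,\D_V;I^{(t/a)}_\bu(\CF)) \ge 1\} = a\cdot\sup\{s : \lct(V,\D_V;I^{(s)}_\bu(\CF)) \ge 1\} = a\mu(\CF).
\]

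For the $b$-shift, the same kind of calculation gives $(\CF(b))^{mt}R_m = \CF^{mt-bm}R_m = \CF^{m(t-b)}R_m$, and therefore $I^{(t)}_\bu(\CF(b)) = I^{(t-b)}_\bu(\CF)$. Since $t\mapsto t-b$ is a bijection on $\IR$, taking the supremum with the substitution $s = t-b$ yields $\mu(\CF(b)) = \mu(\CF)+b$.

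There is no real obstacle here: both statements are formal consequences of the definition of the base ideal sequence together with the elementary fact that $\lct$ depends only on the ideal sequence. The only care needed is to verify that the supremum is well-defined in both cases, which follows from the observation already in the paper that $\lam_\max(a\CF) = a\lam_\max(\CF)$ and $\lam_\max(\CF(b)) = \lam_\max(\CF) + b$, bounding the set in the supremum from above, together with the fact that for $t \ll 0$ the base ideal is the unit ideal (so $\lct \ge 1$ trivially).
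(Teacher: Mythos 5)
Your proof is correct, and since the paper states this lemma without proof, your argument fills in exactly what the paper takes for granted: both identities are direct consequences of how the rescaling and shift act on the base ideal sequences, namely $I^{(t)}_\bu(a\CF) = I^{(t/a)}_\bu(\CF)$ and $I^{(t)}_\bu(\CF(b)) = I^{(t-b)}_\bu(\CF)$, combined with the corresponding change of variable in the supremum defining $\mu$. Your closing remark on well-definedness (the bound $\mu(\CF)\le\lam_\max(\CF)$ noted in the paper, together with triviality of the base ideals for $t\ll 0$ once $m$ is sufficiently divisible so that $mL$ is globally generated) is the right sanity check and is also what the paper implicitly relies on.
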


A valuation $v$ over $(V,\D_V)$ is called {\it weakly special} if it is a log canonical place of some $\IQ$-complement of $(V,\D_V)$. 
We have the following characterization of weakly special valuations. 
  
\begin{thm}\cite[Theorem 2.13]{Wang24b}
\label{Theorem. global weakly special criterion}
A valuation $v$ over $(V,\D_V)$ is weakly special if and only if $v$ is quasi-monomial and $\mu_{V,\D_V}(\CF_v) = A_{V,\D_V}(v)$. 
\end{thm}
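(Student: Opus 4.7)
The plan is to mirror the local criterion (Theorem \ref{Theorem. local weakly special criterion}), replacing the local base ideal $\fa_\bu(v)$ by the graded base-ideal sequence $I^{(c)}_\bu$ attached to $\CF_v$ on the anti-canonical ring of $(V,\D_V)$, where $c=A_{V,\D_V}(v)$.

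\textbf{The ``only if'' direction} is a direct computation. Suppose $v\in\LC(V,\D_V+\Gamma)$ for some $\IQ$-complement $\Gamma$, so $v$ is quasi-monomial and $A_{V,\D_V}(v)=v(\Gamma)$. Writing $m\Gamma=\div(s)$ for some $m\in l_0\IN$ and $s\in H^0(V,mL)$, we have $s\in\CF_v^{mv(\Gamma)}H^0(V,mL)$. Hence $I_{m,mv(\Gamma)}\supseteq(s)=\CO_V(-m\Gamma)$, giving $\lct(V,\D_V;I^{(v(\Gamma))}_\bu)\ge\lct(V,\D_V;\Gamma)=1$ and thus $\mu(\CF_v)\ge A_{V,\D_V}(v)$. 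For the reverse inequality, for any $t<\mu(\CF_v)$ one has $\lct(V,\D_V;I^{(t)}_\bu)\ge 1$, while every generator of $I_{m,mt}$ has $v$-order at least $mt$ by construction; therefore $A_{V,\D_V}(v)\ge v(I^{(t)}_\bu)\ge t$, and letting $t\to\mu(\CF_v)$ gives equality.

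\textbf{The ``if'' direction} transplants the local argument. Set $c=A_{V,\D_V}(v)=\mu(\CF_v)$. Then $v$ computes $\lct(V,\D_V;I^{(c)}_\bu)=1$, and the function $w\mapsto A_{V,\D_V+I^{(c)}_\bu}(w)$ is convex and non-negative on any simplicial quasi-monomial cone $\sigma$ of minimal dimension containing $v$ inside a log smooth model $(Y',E')\to V$, vanishing at $v$ and hence locally Lipschitz there. Combining local Lipschitz continuity with the HMX-type constant from \cite{HMX14} and Diophantine approximation \cite[Lemma 2.7]{LX18}, we find divisorial valuations $v_i=c_iq_i^{-1}\ord_{E_i}\in\sigma$ close to $v$ satisfying $A_{V,\D_V+I^{(c)}_\bu}(E_i)<\vep/2$. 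For $m\gg0$, a general section $s\in\CF_v^{mc}H^0(V,mL)$ yields $D_m=\frac1m\div(s)\sim_\IQ L$ such that $(V,\D_V+(c-\vep')D_m)$ is klt and $A_{V,\D_V+(c-\vep')D_m}(E_i)<\vep$.

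By \cite[Corollary 1.4.3]{BCHM10} there is a $\IQ$-factorial birational model $\pi:Y\to V$ extracting exactly $E_1,\ldots,E_r$; write $E=\sum_i E_i$. The crepant pullback
\[
\pi^*(K_V+\D_V+(c-\vep')D_m)\ge K_Y+\pi_*^{-1}\D_V+(1-\vep)E
\]
forces $(Y,\pi_*^{-1}\D_V+(1-\vep)E)$, and hence $(Y,\pi_*^{-1}\D_V+E)$, to be lc; since $Y$ is $\IQ$-factorial the latter is qdlt, and the same formula makes $Y$ of Fano type over $V$. A $-(K_Y+\pi_*^{-1}\D_V+E)$-MMP over $V$ then produces a good minimal model $\phi:Y\dashrightarrow\bar Y$ with $-(K_{\bar Y}+\bar\pi_*^{-1}\D_V+\bar E)$ nef, hence semiample, over $V$; since this MMP is an isomorphism at the generic point of every stratum of $(Y,\pi_*^{-1}\D_V+E)$, a general member of the relative linear system on $\bar Y$ gives a $\IQ$-complement whose pushforward $\Gamma$ is a $\IQ$-complement of $(V,\D_V)$ with each $E_i$, and therefore $v$, as an lc place. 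The main obstacle is the passage from the graded linear series $I^{(c)}_\bu$ to a concrete $\IQ$-divisor $D_m\sim_\IQ L$ while preserving the log discrepancy bounds at the extracted $E_i$; once this generic-section approximation is carried out for $m\gg 0$, the extraction-plus-MMP machinery from the local proof of Theorem \ref{Theorem. local weakly special criterion} carries over nearly verbatim to the global setting.
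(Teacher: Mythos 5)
This theorem is not proved in the paper: it is imported verbatim from \cite[Theorem 2.13]{Wang24b}, so there is no internal proof to compare your attempt against. What the paper does prove is the \emph{local} analogue (Theorem \ref{Theorem. local weakly special criterion}), and your proposal is essentially an attempt to transplant that argument to the global log Fano setting. The ``only if'' direction is fine (and, as you in effect show, the inequality $\mu_{V,\D_V}(\CF_v)\le A_{V,\D_V}(v)$ holds for \emph{every} quasi-monomial $v$, with the complement only needed for the reverse inequality). The ``if'' direction, however, contains two concrete gaps.

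First, the klt perturbation is mis-scaled. In the local proof the ideals $\fa_m=\fa_m(v)$ are normalized so that $v(\fa_m)=m$ and $\lct(X,\D;\fa_\bu)=c$, which is why $(X,\D+\frac{c-\vep'}{m}\fa_m)$ is klt: one has $m\cdot\lct(\fa_m)\ge c$. In the global setting your $D_m=\frac1m\div(s)$ with $s\in\CF_v^{mc}H^0(mL)$ comes from the ideal sequence $I^{(c)}_\bu$, whose lct is already normalized to $1$ (assuming the continuity of $t\mapsto\lct(I^{(t)}_\bu)$ at $t=c$, which you should also justify). Hence $m\cdot\lct(I_{m,mc})\ge 1$, not $\ge c$, and a general $D_m$ has $v(D_m)$ close to $c$, not to $1$. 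Writing $(V,\D_V+(c-\vep')D_m)$ then gives $A_{V,\D_V+(c-\vep')D_m}(v)\approx c-c(c-\vep')$, which is \emph{negative} when $c>1$; the pair is not even lc. The correct perturbation is $(1-\vep')D_m$ (equivalently, one should first rescale $v$ so that $A_{V,\D_V}(v)=1$, which is harmless for the statement), and the rest of the estimates must be adjusted accordingly.

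Second, and more seriously, the final step ``a general member of the relative linear system on $\bar Y$ gives a $\IQ$-complement'' does not go through. In the local case $X$ is affine and a $\IQ$-complement is only a condition at $x$, so relative nefness/semiampleness over $X$ suffices. In the global case a $\IQ$-complement of $(V,\D_V)$ must satisfy the \emph{absolute} constraint $\Gamma\sim_\IQ -(K_V+\D_V)$; a divisor $\Theta$ with $K_{\bar Y}+\bar\pi_*^{-1}\D_V+\bar E+\Theta\sim_{\IQ,V}0$ only gives $K_V+\D_V+\bar\pi_*\Theta\sim_\IQ M$ for some $M$ on $V$ which need not be trivial. The fix is to observe that $-(K_Y+\pi_*^{-1}\D_V+E)\sim_\IQ \pi^*L-\sum_i A_{V,\D_V}(E_i)E_i$ is big because the $A_{V,\D_V}(E_i)$ are small and exceptional, that $Y$ is of Fano type \emph{absolutely} (not merely over $V$) by the crepant pullback $K_Y+\pi_*^{-1}(\D_V+(1-\vep')D_m)+\sum(1-a_i)E_i\sim_\IQ-\vep'\pi^*L$, and then to run an MMP that produces $-(K_{\bar Y}+\bar\pi_*^{-1}\D_V+\bar E)$ nef and big absolutely (then semiample), after which pushing a general member down to $V$ does give $\Gamma\sim_\IQ L$. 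The ``isomorphism-at-strata'' check, which you omit but which the local proof performs explicitly, is also needed to guarantee $v$ survives to $\bar Y$. So the skeleton is right, but you have misidentified the main obstacle (it is the global Calabi--Yau degree constraint and the absolute MMP, not the generic-section approximation), and the perturbation coefficient is off by a factor of $c$.
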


\begin{defi}\rm 
For any linearly bounded filtration $\CF$, the {\it Ding invariant} is defined by 
\begin{eqnarray*}
\BD_{V,\D_V}(\CF) = \mu_{V,\D_V}(\CF) - S_{V,\D_V}(\CF). 
\end{eqnarray*}
\end{defi}

\subsection{Quasi-regular log Fano cone singularities}
Let $(X,\D,\xi_0)$ be a log Fano cone singularity, where $X=\Spec(R)$ and $R=\oplus_{\alpha\in M}R_\alpha$. 
If $\xi_0$ is rational ($\xi_0\in N_\IQ$), then $(X,\D,\xi_0)$ is said to be {\it quasi-regular}. Let 
$$R^{\xi_0} := \bigoplus_{m\in\IN} R^{\xi_0}_m, \quad
R^{\xi_0}_m:=\bigoplus_{\la \alpha, \xi_0\ra =m} R_\alpha. $$ 
Then $R^{\xi_0} \seq R$ is a $\IT$-invariant subring, and the equality holds if and only if $\xi_0\in N$. In the following, we assume that $\xi_0\in N$.  
In this case, $\xi_0$ generates a $\IG_m$-action on $X$, which we denote by $\la\xi_0\ra$. 
Then $V=\Proj (R^{\xi_0})$ is the quotient of $X\setminus\{x\}$ by the $\la\xi_0\ra$-action, and the natural map $\tau: X\setminus\{x\}\to (V,\D_V^0)$ is a Seifert $\IG_m$-bundle (see \cite{Kol13}), where $\D_V^0=\sum_j(1-\frac{1}{b_j})D_j^0$ is the branched divisor of $\tau$. We have $K_{X\setminus\{x\}} = \tau^*(K_V+\D_V^0)$. Write $\D=\sum_ia_i\D_i$, where $\D_i$ are $\la\xi_0\ra$-invariant. Hence we have $\D_i = \frac{1}{m_i} \tau^*D_i^1$ for some prime divisor $D_i^1$ on $V$ and $m\in \IZ_{\ge 1}$. If we denote by $\D_V^1 := \sum_i \frac{a_i}{m_i}D_i^1$, then $\D|_{X\setminus\{x\}} = \tau^*\D_V^1$. Finally, let $\D_V = \D_V^0+\D_V^1$. We have $(K_X+\D)|_{X\setminus\{x\}} = \tau^*(K_V+\D_V)$. By \cite[Section 3.1]{Kol13}, we see that $(V,\D_V)$ is a log Fano pair admitting a $\IT_V=(\IT/\la\xi_0\ra)$-action. Since $V=\Proj(R^{\xi_0})$, there exists an ample $\IQ$-divisor $L_0$ on $V$ such that 
\begin{eqnarray}
\label{Eqnarray: R_m = H^0(V,mL)}
R^{\xi_0}_m\cong H^0(V,mL_0)
\end{eqnarray}
for sufficiently divisible $m\in\IN$. We have 
\begin{eqnarray}
\label{Eqnarray: -(K+D) = A(xi_0)L_0}
-(K_V+\D_V) \sim_\IQ A(\xi_0)L_0. 
\end{eqnarray}
Indeed, let $p: \tX=\Spec(\oplus_{m\in l_0\IN} \CO_V(mL_0)) \to V$ be the total space of the line bundle $-l_0L_0$. Then the natural morphism $\pi:\tX\to X$ induced by $H^0(V,mL_0)$ is birational which is an isomorphism over $X\setminus \{x\}$ and has exceptional locus $\Ex(\pi) =: V_0 \cong V$. We see that $p|_{X\setminus\{x\}} =\tau$. Hence $$K_V+\D_V = (K_{\tX}+\pi_*^{-1}\D+V_0)|_{V_0} \sim_\IQ A_{X,\D}(V_0)V_0|_{V_0} = -A_{X,\D}(V_0)l_0L_0. $$
On the other hand, note that $\wt_{\xi_0}=l_0\ord_{V_0}$. We get the $\IQ$-linear equivalence (\ref{Eqnarray: -(K+D) = A(xi_0)L_0}). 

We see that $(V,\D_V)$ does not depend on the rescaling of $\xi_0$, but $L_0$ does. 

By Definition \ref{Definition. filtrations of klt singularities} and Definition \ref{Definition. filtration of Fano varieties}, we have the following correspondence. 

\begin{lem}
\label{Lemma. filtration correspondence of quasi-regular cone}
Any $\IT$-invariant linearly bounded filtration $\CF$ on $R$ restricts to a $\IT$-invariant linearly bounded filtration with $\lam_\min(L_0;\CF)>0$ on $R(L_0)\cong R^{\xi_0} \seq R$. Conversely, any $\IT$-invariant linearly bounded filtration $\CF$ on $R(L_0)$ with $\lam_\min(L_0;\CF)>0$ generates a $\IT$-invariant linearly bounded filtration on $R$. Moreover, 
\begin{eqnarray*}
\lam_\min(\xi_0;\CF) &=& \lam_\min(L_0;\CF), \\
\lam_\max(\xi_0;\CF) &=& \lam_\max(L_0;\CF), \\
S(\xi_0;\CF) &=& S(L_0;\CF), \\
\CF_{b\xi_0} &=& \CF(b)R(L_0).  
\end{eqnarray*}
\end{lem}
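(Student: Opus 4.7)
The plan is to exploit $\IT$-invariance together with the weight decomposition $R = \bigoplus_m R^{\xi_0}_m \cong \bigoplus_m R_m(L_0)$: for any weight-homogeneous $f \in R_\alpha \seq R^{\xi_0}_m$, $\wt_{\xi_0}(f) = \la\alpha, \xi_0\ra = m$ equals the degree of $f$ in $R(L_0)$. Consequently, all local invariants of $\CF$ (sup/inf/average of ratios $\ord_\CF(f)/\wt_{\xi_0}(f)$) coincide with the corresponding invariants on $R(L_0)$ once the filtrations on both rings are identified.

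For the correspondence, I restrict $\CF$ on $R$ by $\CF^\lam R_m(L_0) := \CF^\lam \cap R^{\xi_0}_m$ for $\lam > 0$ (extended by $R^{\xi_0}_m$ for $\lam \le 0$), and conversely generate from $\CG$ on $R(L_0)$ the ideal $\CG^\lam := \bigoplus_m \CG^\lam R^{\xi_0}_m \seq R$. The filtration axioms transfer by $\IT$-invariance, and $\CG^\lam$ is an ideal of $R$ since $R_\mu \cdot \CG^\lam R^{\xi_0}_m \seq \CG^\lam R^{\xi_0}_{m+\mu}$. For linear boundedness of the restriction, I combine $\CF^\lam \seq \fm^{\lceil \lam/C \rceil}$ with the observation $R^{\xi_0}_m \cap \fm^k = 0$ for $k > m/\wt_{\xi_0}(\fm)$ (elements of $\fm^k$ have $\xi_0$-weight at least $k\wt_{\xi_0}(\fm)$), bounding $\lam_\max^{(m)}(\CF|_{R(L_0)})$ linearly in $m$. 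Positivity $\lam_\min(L_0; \CF) > 0$ comes from Izumi applied to $\wt_{\xi_0}$ combined with $\fm^N \seq \CF^1$: any $f \in R^{\xi_0}_m$ has $\ord_\fm(f) \ge c m$, hence $\ord_\CF(f) \ge c' m$, so $\rho(m) := \min\{\ord_\CF(f) : 0 \ne f \in R^{\xi_0}_m\} \ge c' m$. Conversely, $\lam_\min(L_0; \CG) > 0$ gives $\rho(m) \ge cm$ for all $m$, whence $\CG^\lam R^{\xi_0}_m = R^{\xi_0}_m$ for all $m \gg 0$, so $R/\CG^\lam R$ is finite-dimensional and $\CG^\lam R$ is $\fm$-primary.

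The twist equality $\CF_{b\xi_0} = \CF(b)R(L_0)$ is immediate: $(\CF_{b\xi_0})^\lam \cap R^{\xi_0}_m = \CF^{\lam - bm} \cap R^{\xi_0}_m$ by definition of the $\xi_0$-twist. The numerical equalities $\lam_\max(\xi_0; \CF) = \lam_\max(L_0; \CF)$ and $\lam_\min(\xi_0; \CF) = \lam_\min(L_0; \CF)$ follow from the characterization $\lam_\max(\xi_0; \CF) = \sup_f \ord_\CF(f)/\wt_{\xi_0}(f)$ (respectively $\inf$ for $\lam_\min$), where $\IT$-invariance reduces the sup/inf to weight-homogeneous $f \in R^{\xi_0}_m$ with $\wt_{\xi_0}(f) = m$; this is exactly the analogous formula on $R(L_0)$ with the degree grading. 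For $S$, pick a $\IT$-weight basis of $R/\CF_{\wt_{\xi_0}}^m = \bigoplus_{k < m} R^{\xi_0}_k$ compatible with $\CF$; then $\tS_m(\xi_0; \CF) = \sum_{k=0}^{m-1} k \cdot \dim R^{\xi_0}_k \cdot S_k(L_0; \CF)$ by definition of the global $k$-th $S$-invariant, and dividing by $\tS_m(\xi_0; \xi_0) = \sum_{k=0}^{m-1} k \cdot \dim R^{\xi_0}_k$ and applying Stolz's theorem yields $S(\xi_0; \CF) = \lim_k S_k(L_0; \CF) = S(L_0; \CF)$. The main obstacle is reconciling the two conventions for filtrations (local: $\fm$-primary ideals with $\CF^0 = R$; global: graded subspaces of $R_m$ with $\CF^\lam R_m = R_m$ for $\lam$ small), and the hypothesis $\lam_\min(L_0; \CG) > 0$ is precisely the bridge that ensures the generation step yields an $\fm$-primary ideal on $R$.
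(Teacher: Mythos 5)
Your argument is essentially a careful elaboration of the paper's very terse proof: the paper simply cites the isomorphism $R^{\xi_0}_m \cong H^0(V,mL_0)$ for the first three equalities and then verifies the twist equality by the same computation on $\IT$-weight pieces that you give. Your weight-basis/Stolz computation for $S(\xi_0;\CF)=S(L_0;\CF)$, your sup/inf characterizations of $\lam_{\max}$ and $\lam_{\min}$ via ratios $\ord_\CF(f)/\wt_{\xi_0}(f)$, and your linear-boundedness estimates are correct and fill in exactly what the paper leaves implicit, so the approach is the same.

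One small imprecision worth flagging: in the converse direction you assert that $\CG^\lam := \bigoplus_m \CG^\lam R^{\xi_0}_m$ is an ideal of $R$ because $R_\mu\cdot\CG^\lam R^{\xi_0}_m \seq \CG^\lam R^{\xi_0}_{m+\mu}$, but this inclusion uses the filtration axiom with a factor in $\CG^0 R^{\xi_0}_\mu$, so it implicitly requires $\CG^0 R^{\xi_0}_\mu = R^{\xi_0}_\mu$ for every $\mu$. The hypothesis $\lam_\min(L_0;\CG)>0$ is asymptotic and only forces $\rho(\mu)=\min\{\ord_\CG(f):0\ne f\in R^{\xi_0}_\mu\}\ge c\mu$ for $\mu\gg 0$; for finitely many small $\mu$ one could in principle have $\rho(\mu)<0$. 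This is resolved by shifting (replace $\CG$ by $\CG_{b\xi_0}=\CG(b)R(L_0)$ with $b\gg 0$, which by your own twist identity moves all weights up linearly without changing whether the result is generated from a local filtration, and shifts the three numerical invariants by $b$ in a compatible way), but the reduction should be stated. The paper glosses over the same point, so this is a gap you inherited rather than introduced.
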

\begin{proof}
The first three equalities follow from (\ref{Eqnarray: R_m = H^0(V,mL)}). We only need to prove the last one. For any $\alpha\in M$ such that $\la\alpha,\xi_0\ra=m$, we have 
\begin{eqnarray*}
\CF_{b\xi_0}^\lam R_\alpha 
= \CF^{\lam - b\la\alpha,\xi_0\ra} R_\alpha 
= \CF^{\lam - bm} R_\alpha
= \CF(b)^\lam R_\alpha. 
\end{eqnarray*}
We conclude by taking direct sum for all $\alpha \in M$. 
\end{proof}

Since $L=-(K_V+\D_V)\sim_\IQ A(\xi_0)L_0$, we have 
\begin{eqnarray}
\label{Eqnarray. S=S}
A(\xi_0)S(\xi_0; \CF) &=& S_{V,\D_V}(\CF), \\
\CF_{b\xi_0} &=& \CF(A(b\xi_0))R(L)
\end{eqnarray}
for any $\IT$-invariant linearly bounded filtration $\CF$ on $R$ and any $b\ge0$. 


\begin{lem}
\label{Lemma. valuation correspondence on quasi-regular cone}
The quotient morphism $\tau: X\setminus \{x\} \to V$ induces an isomorphism of the valuations spaces: 
$\tau^*: \Val_V^{\IT_V,*} \times \IR_{>0} \cong \Val^{\IT,*}_{X,x},\, (v,b)\mapsto v_{b\xi_0}$, where $\IT_V=\IT/\la\xi_0\ra$. Moreover,  
\begin{eqnarray}
\label{Eqnarray. A(v_xi)=A(v)+A(xi)}
A_{X,\D}(v_{b\xi_0}) = A_{V,\D_V}(v) + bA(\xi_0). 
\end{eqnarray}
\end{lem}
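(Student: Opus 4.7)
The plan is to construct an explicit inverse to the proposed map and to derive the log discrepancy formula from the crepant relation $(K_X+\D)|_{X^*}=\tau^*(K_V+\D_V)$ together with Lemma \ref{Lemma. A(v_xi)=A(v)+A(xi)}. For the forward direction, since $\tau\colon X^*\to V$ is a Seifert $\IG_m$-bundle, a $\IT_V$-invariant valuation $v$ on $V$ admits a canonical pullback to a $\IT$-invariant valuation $\tau^*v$ on $K(X)$: on a weight-homogeneous $f\in R_\alpha$, one sets $\tau^*v(f):=v(\bar f)$, where $\bar f\in K(V)$ is obtained from $f$ by trivialising against a $\xi_0$-eigensection of the line bundle $L_\alpha$ on $V$, and the definition extends to $K(X)^\times$ by $\IT$-invariance. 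Then $\lam_\min(\xi_0;\tau^*v)=0$, so by Lemma \ref{Lemma. lam_min xi_0 twist} the twist $(\tau^*v)_{b\xi_0}$ lies in $\Val^{\IT,*}_{X,x}$ for every $b>0$; this is the forward map.

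For the inverse, given $w\in\Val^{\IT,*}_{X,x}$, I would set $b:=\lam_\min(\xi_0;w)>0$. A direct argument (the boundary case of Lemma \ref{Lemma. lam_min xi_0 twist}) shows that $w_{-b\xi_0}$ is still a $\IT$-invariant valuation on $K(X)$, no longer centered at $x$, but with $\lam_\min(\xi_0;w_{-b\xi_0})=0$. This vanishing forces $w_{-b\xi_0}$ to be trivial in the $\xi_0$-direction, hence to coincide with the pullback $\tau^*v$ of a uniquely determined $\IT_V$-invariant valuation $v$ on $V$; finiteness of $A_{V,\D_V}(v)$ follows from the identity established below, and mutual inverseness of the two constructions is then routine.

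For the log discrepancy formula, applying Lemma \ref{Lemma. A(v_xi)=A(v)+A(xi)} to $\tau^*v$ gives $A_{X,\D}((\tau^*v)_{b\xi_0}) = A_{X,\D}(\tau^*v) + bA(\xi_0)$, so the remaining content is $A_{X,\D}(\tau^*v) = A_{V,\D_V}(v)$. For divisorial $v=\ord_E$ on a model of $V$, I would lift $E$ to a $\la\xi_0\ra$-invariant prime divisor $\tilde E$ on a birational model of $X$ and read off both log discrepancies from the crepant relation: the branching multiplicities $m_i$ along $D_i\subset V$ cancel precisely against the coefficients $a_i/m_i$ defining $\D_V^1$, leaving $A_{X,\D}(\tilde E) = A_{V,\D_V}(E)$. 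The general quasi-monomial case follows by linearity of $A$ on simplices and continuity.

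The principal obstacle is this branching bookkeeping: making $\tau^*v$ unambiguous and preserving log discrepancies in the presence of the branch divisor $\D_V^0$, where $\tau$ is not \'etale. Once the divisorial identity is secured, bijectivity of $\tau^*$ and the full log discrepancy formula reduce to formal twist calculations using Lemmas \ref{Lemma. lam_min xi_0 twist} and \ref{Lemma. A(v_xi)=A(v)+A(xi)}.
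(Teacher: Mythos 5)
The paper's own proof is a one-liner: it cites \cite[Lemma 4.2]{BHJ17} for surjectivity of the twist map and \cite[Lemma 2.6, 2.7]{LW24} for the log discrepancy formula. Your proposal follows the same overall strategy, but re-derives the cited results, and the nontrivial content of both citations is precisely where your argument leaves gaps.

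The central gap is in the inverse construction. You write that $\lam_\min(\xi_0; w_{-b\xi_0})=0$ ``forces $w_{-b\xi_0}$ to be trivial in the $\xi_0$-direction, hence to coincide with the pullback $\tau^*v$'' of a valuation on $V$. This is exactly the assertion of \cite[Lemma 4.2]{BHJ17}, and it is the core of the surjectivity statement; it is not a formal consequence of the twist lemmas. The condition $\lam_\min(\xi_0;w')=0$ only tells you that $w'$ is not centered at $x$, and one then has to argue that a $\IT$-invariant valuation on $K(X)$ with finite log discrepancy that is not centered at the vertex is the canonical ($\IG_m$-trivial) extension of its restriction to $K(V)=K(X)^{\la\xi_0\ra}$. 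Writing only ``this vanishing forces\dots'' gives no argument for this implication, so as written the proposal does not establish that your inverse lands back in $\Val_V^{\IT_V,*}\times\IR_{>0}$ and that the two maps are mutually inverse.

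Two smaller points. First, in the forward direction, $\tau^*v$ has $\lam_\min(\xi_0;\tau^*v)=0$, so it lies outside $\Val^{\IT,*}_{X,x}$; Lemma~\ref{Lemma. lam_min xi_0 twist} as stated takes input already in $\Val^{\IT,*}_{X,x}$, so it is not the right citation here --- you want Lemma~\ref{Lemma. twist valuations to vertex} (twist by a Reeb vector of any $\IT$-invariant valuation lands in $\Val^{\IT}_{X,x}$). Second, for the log discrepancy formula $A_{X,\D}(\tau^*v)=A_{V,\D_V}(v)$, you correctly identify the branching along $\D_V^0$ as the delicate point, but the sketch (lift $E$ to $\tilde E$, cancel branching multiplicities against $\D_V^1$-coefficients) is not carried out; that computation is precisely \cite[Lemma 2.7]{LW24}. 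Your outline is sound and consistent with the paper's route, but as a proof it would need the BHJ17 bijection argument spelled out (or cited) and the divisorial log discrepancy identity established in full.
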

\begin{proof}
The pull-back map and $\xi_0$-twist give us an embedding $\Val_V^{\IT_V,*} \times \IR_{>0} \seq \Val^{\IT,*}_{X,x}$, which is surjective by \cite[Lemma 4.2]{BHJ17}. The equality (\ref{Eqnarray. A(v_xi)=A(v)+A(xi)}) follows from \cite[Lemma 2.6 and 2.7]{LW24}. 
\end{proof}

Recall that a {\it $\IQ$-complement} $\Gamma_V$ of the log Fano pair $(V,\D_V)$ is an effective $\IQ$-divisor on $V$ such that $(V,\D_V+\Gamma_V)$ is log canonical and $K_V+\D_V+\Gamma_V\sim_\IQ 0$; a {\it $\IQ$-complement} $\Gamma$ of the klt singularity $x\in(X,\D)$ is an effective $\IQ$-divisor on $X$ such that $(X,\D+\Gamma)$ is log canonical and $x\in X$ is a log canonical center. 
The following Lemma follows directly from \cite[Theorem 3.1]{Kol13}. 

\begin{lem}
\label{Lemma. weakly special valuations correspondence}
Let $\Gamma_V$ be a $(\IT/\la\xi_0\ra)$-invariant $\IQ$-complement of $(V,\D_V)$, and let $\Gamma$ be the closure of $\tau^*\Gamma_V$ on $X$. Then $(X,\D+\Gamma)$ is log canonical. 
Conversely, for any $\IT$-invariant $\IQ$-complement $\Gamma$ of $(X,\D)$, there exists a $\IQ$-complement $\Gamma_V$ of $(V,\D_{V})$ such that $\Gamma|_{X\setminus\{x\}}=\tau^*\Gamma_V$. Moreover, we have an isomorphism  
$$\LC(V,\D_V+\Gamma_V)^{\IT/\la\xi_0\ra} \times \IR_{\ge 0} \cong \LC(X,\D+\Gamma)^\IT, \, (v,b)\mapsto v_{b\xi_0}, $$
where $\LC(Y,\D_Y)^G$ is the set of $G$-invariant log canonical places of a log canonical pair $(Y,\D_Y)$ admitting an algebraic group $G$-action. 
\end{lem}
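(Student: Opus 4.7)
My approach is to exploit the Seifert $\IG_m$-bundle $\tau: X\setminus\{x\}\to V$ together with the crepant relation $(K_X+\D)|_{X\setminus\{x\}} = \tau^*(K_V+\D_V)$ recalled just before the lemma, and Koll\'ar's cone construction of log pairs \cite[Section 3.1]{Kol13}, which matches lc structures on the cone with lc structures on the base. Since $\dim X\ge 2$, the vertex $x$ has codimension $\ge 2$, so any $\IQ$-Cartier $\IQ$-divisor on $X$ is determined by its restriction to $X\setminus\{x\}$, and any $\IQ$-linear equivalence on $X\setminus\{x\}$ extends uniquely to $X$ by normality; this will let me freely pass between $X$ and $X\setminus\{x\}$.

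For the forward direction, I would set $\Gamma$ equal to the closure of $\tau^*\Gamma_V$ in $X$. From $K_V+\D_V+\Gamma_V\sim_\IQ 0$ I obtain $\tau^*(K_V+\D_V+\Gamma_V)\sim_\IQ 0$ on $X\setminus\{x\}$, hence $K_X+\D+\Gamma\sim_\IQ 0$ on $X$ after extending across $x$. Log canonicity of $(X,\D+\Gamma)$ is then checked in two regions: on $X\setminus\{x\}$ it is immediate because each divisorial log discrepancy of $(X\setminus\{x\},\D+\Gamma)$ equals a positive multiple of a log discrepancy of $(V,\D_V+\Gamma_V)$ via the Seifert pullback; at the vertex $x$ I would invoke the cone construction \cite[Section 3.1]{Kol13} applied to the affine cone over the lc pair $(V,\D_V+\Gamma_V)$ polarized by $L_0$. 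The relation $K_V+\D_V+\Gamma_V\sim_\IQ 0$ automatically forces the vertex to be an lc center, so $\Gamma$ is in fact a full $\IQ$-complement of the klt singularity $x\in(X,\D)$. For the reverse direction, $\IT$-invariance (hence $\la\xi_0\ra$-invariance) of each prime component of $\Gamma$ allows it to descend through $\tau$ to a prime divisor on $V$; defining $\Gamma_V$ as the resulting descent, the crepant relation together with the injectivity of $\tau^*$ on $\la\xi_0\ra$-invariant $\IQ$-Cartier classes yields $K_V+\D_V+\Gamma_V\sim_\IQ 0$, while lc-ness of $(V,\D_V+\Gamma_V)$ follows by reversing the forward log discrepancy comparison.

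For the bijection of lc places, I would combine the above with Lemma~\ref{Lemma. valuation correspondence on quasi-regular cone}, which supplies the identification $\Val_V^{\IT_V,*}\times \IR_{>0}\cong \Val_{X,x}^{\IT,*}$, $(v,b)\mapsto v_{b\xi_0}$ (extended naturally to $\IR_{\ge 0}$ by including $\tau^*v$ when $b=0$), together with the log discrepancy formula $A_{X,\D}(v_{b\xi_0})=A_{V,\D_V}(v)+bA(\xi_0)$. The key identity to establish is
\[
v_{b\xi_0}(\Gamma) = v(\Gamma_V) + bA(\xi_0),
\]
whose first summand records $\Gamma|_{X\setminus\{x\}}=\tau^*\Gamma_V$ under the pullback $\tau^*v$, and whose second summand encodes the $\xi_0$-weight identity $\wt_{\xi_0}(\Gamma)=A(\xi_0)$, valid for any $\IT$-invariant effective $\IQ$-Cartier $\IQ$-divisor $\IQ$-linearly equivalent to $-(K_X+\D)$. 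Subtracting from the log discrepancy formula yields $A_{X,\D+\Gamma}(v_{b\xi_0}) = A_{V,\D_V+\Gamma_V}(v)$, so vanishing of one log discrepancy is equivalent to vanishing of the other, which delivers the claimed bijection on the $\IT$-invariant lc places (the $b=0$ slice recovers pulled-back lc places from $V$, while $b>0$ contributes the genuinely twisted ones).

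The hardest part will be the bookkeeping around the Seifert ramification along $\D_V^0$ and the weight identity $\wt_{\xi_0}(\Gamma)=A(\xi_0)$. The ramification is already absorbed into the definition of $\D_V=\D_V^0+\D_V^1$ recalled before the lemma, so the crepant relation $(K_X+\D)|_{X\setminus\{x\}}=\tau^*(K_V+\D_V)$ holds on the nose and the ramification indices cancel in every log-discrepancy comparison; once this is taken as input, the transfer of lc property between upstairs and downstairs is routine. The weight identity reflects the fact that the log volume form on $X$ is a $\xi_0$-eigenform of weight $A(\xi_0)=A_{X,\D}(\wt_{\xi_0})$; concretely, under the identification $-(K_V+\D_V)\sim_\IQ A(\xi_0)L_0$ and the graded structure $R_m^{\xi_0}\cong H^0(V,mL_0)$, any $\IT$-invariant section realizing a $\IQ$-multiple of $-(K_X+\D)$ sits in $R^{\xi_0}$ with the prescribed $\xi_0$-weight, which is precisely the content of the identity.
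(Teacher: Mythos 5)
The paper's proof is a one-line citation to Koll\'ar's Seifert-bundle/cone theorem \cite[Theorem 3.1]{Kol13}; your proof expands that citation into explicit crepant-pullback and log-discrepancy bookkeeping, so the underlying approach is the same. One point in the converse step is not quite right as stated: $\tau^*$ is \emph{not} injective on $\IQ$-Cartier classes. For a Seifert $\IG_m$-bundle the pullback $\tau^*L_0$ is $\IQ$-linearly trivial on $X\setminus\{x\}$, so $\ker(\tau^*:\Pic(V)_\IQ\to\Pic(X\setminus\{x\})_\IQ)$ is generated by $L_0$. From $K_X+\D+\Gamma\sim_\IQ 0$ on $X\setminus\{x\}$ and the crepant relation one only gets $K_V+\D_V+\Gamma_V\sim_\IQ cL_0$ for some $c\in\IQ$ \emph{a priori}; the vanishing $c=0$ is exactly what the vertex being an lc center buys you (Koll\'ar's cone criterion), and is equivalent to the weight identity $\wt_{\xi_0}(\Gamma)=A(\xi_0)$ that you already invoke for the lc-places bijection. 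So the needed input is there in your argument --- just replace the appeal to ``injectivity of $\tau^*$'' by the lc-at-vertex/weight-identity argument. With that repair the proof is correct and matches the paper's route.
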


The following theorem shows that our definition of Ding invariants is the same as those in the literature \cite{LWX18,LW24}, which relies on the theory of weakly special valuations. 
\begin{thm}
\label{Theorem. lct = mu}
For any $\IT$-invariant linearly bounded filtration $\CF$ on $R$, we have 
\begin{eqnarray}
\label{Eqnarray. lct=mu}
\lct(X,\D;\CF) 
= \mu_{V,\D_V}(\CF). 
\end{eqnarray}
\end{thm}
\begin{proof}
We simply denote by 
$$\lct(\CF)=\lct(X,\D;\CF), \quad \mu(\CF)=\mu_{V,\D_V}(\CF), \quad a=A(\xi_0).$$ 
If $\CF=\CF_w$ and $w=v_{b\xi_0}$ for some weakly special valuation $v\in\Val_{V}$ (with respect to $(V,\D_V)$) and $b>0$, then $w$ is also weakly speicial (with respect to $(X,\D)$) by Lemma \ref{Lemma. weakly special valuations correspondence}. We have
\begin{eqnarray}
\label{Eqnarray. lct = mu   for weakly special valuations}
\lct(\CF_w) = A_{X,\D}(w) = A_{V,\D_V}(v)+ab = \mu(\CF_v)+ ab = \mu(\CF_v(ab)). 
\end{eqnarray}
We get the equality (\ref{Eqnarray. lct = mu   for weakly special valuations}) since $\CF_{v_{b\xi_0}} = \CF_{v,b\xi_0}=\CF_v(ab)$. 

In general, for any linearly bounded filtration $\CF$ on $R$, there exists a weakly special minimizer $v$ of $\lct(V,\D_V; I_\bu^{(\mu)}(\CF))$ by \cite{Xu19} and Theorem \ref{Theorem. global weakly special criterion}, where $\mu=\mu(\CF)$. Hence $w=v_{b\xi_0}$ is weakly special on $(X,\D)$ for any $b>0$ by Lemma \ref{Lemma. weakly special valuations correspondence}. Let $f_v(t) = v(I_\bu^{(t)})$, and denote by $f_{v,-}'(t)$ the left-derivative of $f_v$. With the same argument as \cite[Theorem 3.52]{Xu24} (see also \cite[Theorem 3.8]{Wang24b}), by rescaling $v$ such that $f_{v,-}'(\mu)=1$, we have $\CF':=\CF(A_{V,\D_V}(v)-\mu)\seq \CF_v$. Hence $A_{V,\D_V}(v) = \mu(\CF') \le \mu(\CF_v)\le A_{V,\D_V}(v)$, that is, $\mu(\CF)=\mu(\CF_v)=A_{V,\D_V}(v)$. Shifting again, we get $\CF\seq \CF_v(\mu-A_{V,\D_V}(v))=: \CF''$ and $\mu(\CF)=\mu(\CF'')$. For $b\gg0$, we have 
$$\lct(\CF)+ab= \lct(\CF_{b\xi_0}) \le \lct(\CF''_{b\xi_0}) = \mu(\CF''_{b\xi_0}) =\mu(\CF'') +ab, $$
where the second equality follows from (\ref{Eqnarray. lct = mu   for weakly special valuations}). We get $\lct(\CF)\le \mu(\CF)$. 

Conversely, by \cite{Xu19} and Theorem \ref{Theorem. local weakly special criterion}, there exists a $\IT$-invariant weakly special valuation $w$ minimizing $\lct(\CF)$. Rescale $w$ such that $w(\CF)=1$, then $\lct(\CF)=A_{X,\D}(w)$ and $\CF\seq \CF_w$. Hence $A_{X,\D}(w)=\lct(\CF)\le \lct(\CF_w)\le A_{X,\D}(w)$, that is, $\lct(\CF)=\lct(\CF_w)=A_{X,\D}(w)$. By Lemma \ref{Lemma. valuation correspondence on quasi-regular cone}, $w=v_{b\xi_0}$ for some $b>0$ and $v\in\Val_{V}^\IT$. We conclude that 
$$\mu(\CF) \le \mu(\CF_w)=\lct(\CF_w) =\lct(\CF), $$
where the first equality follows from (\ref{Eqnarray. lct = mu   for weakly special valuations}).
\end{proof}

\begin{cor}
\label{Corollary. D=D}
For any $\IT$-invariant linearly bounded filtration $\CF$ on $R$, we have 
\begin{eqnarray}
\BD_{X,\D,\xi_0}(\CF) = \BD_{V,\D_V}(\CF). 
\end{eqnarray}
\end{cor}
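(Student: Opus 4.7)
The plan is to combine the two identities that have just been established into a single subtraction. The definitions give
\[
\BD_{X,\D,\xi_0}(\CF) = \lct(X,\D;\CF) - A(\xi_0)\, S(\xi_0;\CF), \qquad
\BD_{V,\D_V}(\CF) = \mu_{V,\D_V}(\CF) - S_{V,\D_V}(\CF),
\]
so it suffices to match the two terms on each side.

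First, the theorem immediately preceding the corollary shows $\lct(X,\D;\CF) = \mu_{V,\D_V}(\CF)$, which handles the log-canonical/log-canonical-slope pieces. Second, Lemma \ref{Lemma. filtration correspondence of quasi-regular cone} identifies $\CF$ with its restriction to $R(L_0) \cong R^{\xi_0}$ and gives $S(\xi_0;\CF) = S(L_0;\CF)$, while the $\IQ$-linear equivalence $-(K_V+\D_V) \sim_\IQ A(\xi_0) L_0$ from equation (\ref{Eqnarray: -(K+D) = A(xi_0)L_0}) rescales this $S$-invariant to the one on the anti-canonical ring; together these are recorded as equation (\ref{Eqnarray. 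S=S}), namely $A(\xi_0)\, S(\xi_0;\CF) = S_{V,\D_V}(\CF)$.

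Subtracting the second identity from the first yields the desired equality $\BD_{X,\D,\xi_0}(\CF) = \BD_{V,\D_V}(\CF)$. There is no real obstacle here: the corollary is a bookkeeping consequence of the two nontrivial identities already in hand, and its purpose is simply to confirm that the Ding invariant defined via the filtration $\CF$ on the log Fano cone coincides with the classical Ding invariant of the quotient log Fano pair in the quasi-regular case, thereby reconciling our definition of $\BD$ with those of \cite{LWX18} and \cite{LW24}.
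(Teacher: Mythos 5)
Your argument is correct and is exactly the paper's proof, which cites the two identities $\lct(X,\D;\CF)=\mu_{V,\D_V}(\CF)$ and $A(\xi_0)S(\xi_0;\CF)=S_{V,\D_V}(\CF)$ and subtracts; you have merely written out the bookkeeping more explicitly.
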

\begin{proof}
It follows from (\ref{Eqnarray. S=S}) and (\ref{Eqnarray. lct=mu}). 
\end{proof}

Let $(\CV,\D_{\CV};\eta)$ be a normal test configuration of $(V,\D_V)$, and choose $\lam>0$ such that $\lam(K_\CV+\D_\CV)$ is Cartier. Then we get a normal test configuration $(\CX,\D_\CX,\xi_0;\eta)$ of the log Fano cone $(X,\D,\xi_0)=C(V,\D_V;-\lam(K_V+\D_V))$, where $(\CX,\D_\CX,\xi_0) = C(\CV,\D_\CV;-(K_\CV+\D_\CV))$. We have $A(\xi_0)=\lam^{-1}$.  

\begin{thm}\cite[Lemma 6.20]{Li17}, \cite[Lemma 2.30]{LWX18}
\label{Theorem. Fut=Fut}
\begin{eqnarray}
\Fut(\CX,\D_\CX,\xi_0;\eta) = \Fut(\CV,\D_{\CV};\eta). 
\end{eqnarray}
\end{thm}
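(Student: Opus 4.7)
My plan is to derive both Futaki invariants from the same equivariant Hilbert series of the central fiber and compare them by Rees construction together with the graded identification $R_m^{\xi_0}\cong H^0(V,mL_0)$ given in (\ref{Eqnarray: R_m = H^0(V,mL)}).

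First I would identify the underlying data. By Rees construction, the coordinate ring of $\CX_0$ is $\CR_0 = \bigoplus_k \Gr_\CF^k R$, carrying the natural $(\IT\times\IG_m)$-action for which $\xi_0$ acts through $\IT$ while $\eta$ acts with weight $k$ on $\Gr_\CF^k$. Restricting to $\xi_0$-weight $m$ gives $(\CR_0)_m^{\xi_0} = \Gr_\CF R_m^{\xi_0}$, and Lemma~\ref{Lemma. filtration correspondence of quasi-regular cone} together with (\ref{Eqnarray: R_m = H^0(V,mL)}) identifies this, as an $\eta$-graded $\IT_V$-representation, with $\Gr_\CF H^0(V,mL_0)$, i.e.\ with the graded data of the QG test configuration $(\CV,\D_\CV;\eta)$ on $R(L_0)$. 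In particular, the two-parameter dimension/weight data agrees.

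Second, I would express each invariant as a reading of the same asymptotic data. On the cone side, expand the equivariant index character
\[
F(\xi_0+s\eta,t) = \sum_{m,w}\dim\bigl(\Gr_\CF^w R_m^{\xi_0}\bigr)\, e^{-t(m+sw)} = \frac{a_0(s)}{t^n} + \frac{a_1(s)}{t^{n-1}} + O(t^{-(n-2)})
\]
as $t\to 0^+$. Then $\vol_{\CX_0}(\xi_0+s\eta)$ is a multiple of $a_0(s)$, and the normalization $\Fut = D_{T_{\xi_0}(\eta)}\vol/\vol$ with $T_{\xi_0}(\eta) = \tfrac{1}{n}(A(\xi_0)\eta - A(\eta)\xi_0)$ extracts a specific linear combination of $a_0'(0)$ and $a_0(0)$. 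On the pair side, the standard Hilbert/weight asymptotics
\[
N_m = \dim H^0(V,mL_0) = a_0 m^n + a_1 m^{n-1} + O(m^{n-2}), \qquad w_m = b_0 m^{n+1} + b_1 m^n + O(m^{n-1})
\]
read off $\Fut(\CV,\D_\CV;\eta)$ as the corresponding Donaldson-Futaki combination of $a_0, a_1, b_0, b_1$.

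Third, I would carry out the algebraic comparison. Using the $\IQ$-linear equivalence $-(K_V+\D_V)\sim_\IQ A(\xi_0)L_0$ from (\ref{Eqnarray: -(K+D) = A(xi_0)L_0}) to convert between the two polarizations (and to absorb the factor $A(\xi_0) = \lam^{-1}$), together with Lemma~\ref{Lemma. A(v_xi)=A(v)+A(xi)} which controls how $A(\eta)$ on the cone side contributes to the log discrepancy of the exceptional $V_0$, the two extracted quantities match. The main obstacle is keeping the bookkeeping straight: matching the $T_{\xi_0}$-normalization on the cone side with the usual DF normalization on the pair side, and ensuring that the $A(\eta)\xi_0$ correction term on the cone side exactly reproduces the $(K_{\CV/\IP^1}+\D_\CV)\cdot L_\CV^n$ boundary contribution on the pair side. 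Once the two asymptotic expansions are laid out with the correct scaling, the equality reduces to a direct substitution of coefficients.
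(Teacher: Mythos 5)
The paper does not prove this theorem; it cites \cite{Li17} and \cite{LWX18}, so there is no in-paper argument to compare against. Your plan — Rees construction identifying $\CR_0 \cong \Gr_\CF R$, equivariant index character asymptotics on the cone side, Hilbert and total-weight asymptotics of $\CF$ on $R(L_0)$ on the base side, and conversion via $-(K_V+\D_V)\sim_\IQ A(\xi_0)L_0$ — is exactly the strategy of those references, so the outline is correct.

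That said, the step you defer as ``a direct substitution of coefficients'' is where all the content lives, and two points need to be pinned down before the argument closes. First, since $\vol_{\CX_0}$ is homogeneous of degree $-n$ in the Reeb vector, one has $D_{\xi_0}\vol_{\CX_0}(\xi_0) = -n\vol_{\CX_0}(\xi_0)$, so
\[
\Fut(\CX,\D_\CX,\xi_0;\eta) \;=\; \frac{A(\xi_0)}{n}\cdot\frac{D_\eta\vol_{\CX_0}(\xi_0)}{\vol_{\CX_0}(\xi_0)} \;+\; A(\eta),
\]
where $D_\eta\vol/\vol = a_0'(0)/a_0(0)$ in your index-character notation; your phrasing that $\Fut$ is ``a linear combination of $a_0'(0)$ and $a_0(0)$'' elides this homogeneity reduction. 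Second, and more substantively, the constant $A(\eta)$ here is the log discrepancy of $\wt_\eta$ on the \emph{central fiber} $(\CX_0,\D_{\CX_0})$, not on $(X,\D)$; Lemma~\ref{Lemma. A(v_xi)=A(v)+A(xi)}, which you invoke for this purpose, concerns twists of valuations over $(X,\D)$ and does not directly control $A(\eta)$. The actual lemma is precisely the assertion that this $A(\eta)$ together with $a_0'(0)/a_0(0) = -b_0/a_0$ reproduces the Donaldson--Futaki combination of the \emph{subleading} coefficients $a_1,b_1$ on $R(L_0)$ (equivalently, the relative log canonical contribution of $(\CV,\D_\CV)\to\IP^1$). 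Until that coefficient identity is actually computed — as the cited references do — this is a proof plan rather than a proof.
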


\begin{thm}\cite[Proposition 3.15]{Wu22}, \cite[Theorem 5.6]{LW24}
\label{Theorem. Fut is continuous w.r.t xi_0}
Let $x\in (X= \Spec(R),\D)$ be a klt singularity with good $\IT$-action. Let $\Reeb$ be the Reeb cone of it. For any $\IT$-equivariant test configuration $(\CX,\D_\CX;\eta)$ of $(X,\D)$ and any $\IT$-invariant linearly bounded filtration $\CF$ on $R$, the following real functions on $\Reeb$ are continuous: 
\begin{eqnarray}
\xi_0 &\mapsto& \Fut(\CX,\D_{\CX},\xi_0;\eta),  S(\xi_0;\CF), \lam_\max(\xi_0;\CF). 
\end{eqnarray}
\end{thm}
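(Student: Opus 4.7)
The plan is to reduce all three continuity statements to the continuity of a single intrinsic measure associated to the pair $(R, \CF)$ whose dependence on $\xi_0$ enters only through a linear reweighting of weights.

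Step 1 (continuity of $\lam_\max$ and $S$). The weight decomposition $R = \bigoplus_{\alpha \in M} R_\alpha$ is independent of $\xi_0$. Fix a $\IT$-homogeneous $\Ik$-basis $\{e_i\}_{i \in I}$ of $R$ compatible with $\CF$, with $\alpha_i \in M$ the $\IT$-weight of $e_i$ and $\lam_i := \ord_\CF(e_i)$. All data $(\alpha_i, \lam_i)$ are intrinsic to $(R, \CF)$, and for every $m \in \IN$ and $\xi_0 \in \Reeb$ one has
\begin{eqnarray*}
\ell(R/\CF_{\wt_{\xi_0}}^m) &=& \#\{i : \la \alpha_i, \xi_0\ra < m\}, \\
\tS_m(\xi_0; \CF) &=& \sum_{\la \alpha_i, \xi_0\ra < m} \lam_i, \\
\lam_\max^{(m)}(\xi_0; \CF) &=& \max\{\lam_i : \la \alpha_i, \xi_0\ra < m\}.
\end{eqnarray*}
Consider the empirical measures $\nu_m := m^{-n} \sum_i \delta_{(\alpha_i/m,\, \lam_i/m)}$ on $M_\IR \times \IR$. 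Linear boundedness of $\CF$ together with the goodness of the $\IT$-action ensures that, on any compact subset $K \subset \Reeb$, the $\nu_m$ are eventually supported in a common compact set and converge weakly as $m \to \infty$ to a $\xi_0$-independent Radon measure $\nu$ (this is the Newton--Okounkov/Duistermaat--Heckman realization of $(R, \CF)$). The three quantities above then identify, after dividing by $m^n$, $m^{n+1}$, $m$, with integrals (resp.\ suprema) of fixed continuous functions of $(\alpha, \lam)$ against (resp.\ over) the restriction of $\nu$ to the half-space $\{\la \alpha, \xi_0\ra \le 1\}$. Since this half-space depends continuously on $\xi_0$, so do $S(\xi_0; \CF) = \lim_m \tS_m(\xi_0;\CF)/\tS_m(\xi_0;\xi_0)$ and $\lam_\max(\xi_0; \CF)$.

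Step 2 (continuity of $\Fut$). By the remark following the definition of test configurations, $a\xi_0 + \eta$ lies in the Reeb cone of $\CR_0$ under the $(\IT \times \IG_m)$-action for $a \gg 0$, so $\vol_{\CX_0}$ is well-defined on an open neighborhood of $\xi_0$ in $N_\IR$. Applying Step 1 to $\CR_0$ (with trivial filtration, so that only the first formula is needed) realizes $\vol_{\CX_0}(\xi_0)$ as the mass of $\{\la \alpha, \xi_0\ra \le 1\}$ under a fixed Radon measure on $(M_\IR \oplus \IR\eta^\vee)$; this function is real-analytic (indeed rational) in $\xi_0$, so its directional derivatives are continuous. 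Since $A$ is linear on $\Reeb$ by Lemma \ref{Lemma. A(v_xi)=A(v)+A(xi)}, the twist $T_{\xi_0}(\eta) = \tfrac{1}{n}(A(\xi_0)\eta - A(\eta)\xi_0)$ depends smoothly on $\xi_0$, and the ratio formula for $\Fut$ then yields continuity.

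The main obstacle is the locally uniform convergence $\nu_m \to \nu$ in Step 1: the indicators $\mathbf{1}_{\{\la \alpha_i, \xi_0\ra < m\}}$ are individually discontinuous in $\xi_0$, so the Riemann-sum convergence to the limit integral is not obviously locally uniform. The fix is a Dini-type estimate on a compact subset $K \subset \Reeb$: since $\ell(R/\CF_{\wt_{\xi_0}}^m) = O(m^n)$ uniformly over $K$, one has $\#\{i : \la \alpha_i, \xi_0\ra \in [m-1, m+1]\} = O(m^{n-1})$ uniformly over $K$, so boundary fluctuations contribute $o(m^{n+1})$ in $\tS_m$ (and $o(m)$ in $\lam_\max^{(m)}$ once one bounds $\lam_i = O(\la \alpha_i, \xi_0\ra)$ via linear boundedness) uniformly on $K$, which is enough to conclude.
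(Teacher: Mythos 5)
The paper itself does not prove this theorem; it is cited from \cite[Proposition 3.15]{Wu22} and \cite[Theorem 5.6]{LW24}. So there is no in-paper argument to compare against; I will assess your proposal on its own merits.

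Your high-level strategy (reduce everything to a Duistermaat--Heckman realization of $(R,\CF)$ and then read off continuity from the fact that only the cutting half-space $\{\la\alpha,\xi_0\ra\le1\}$ moves with $\xi_0$) is a reasonable one and is in the spirit of the Okounkov-body arguments that do appear in the literature. But as written there are two genuine gaps.

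First, the locally uniform convergence $\nu_m\to\nu$ is the entire content of the theorem and you essentially assume it. You flag this yourself, but the proposed fix does not close the gap: the ``Dini-type estimate'' $\#\{i:\la\alpha_i,\xi_0\ra\in[m-1,m+1]\}=O(m^{n-1})$ \emph{uniformly over} a compact $K\subset\Reeb$ is precisely a locally-uniform Hilbert-function shell bound, and that is the kind of statement the DH/Okounkov machinery is supposed to deliver, not a triviality one can feed into it. For a \emph{fixed} $\xi_0$ the shell bound follows from the existence of $\vol(\wt_{\xi_0})$, but nothing you have written promotes this to a bound uniform in $\xi_0$; you would need, for instance, to first fix a flag valuation, build the Okounkov body $\Delta\subset M_\IR\times\IR$ (which requires verifying the relevant semigroup axioms for the pair $(M$-grading, $\ord_\CF)$), identify $\nu$ with Lebesgue measure on $\Delta$, and only then use the convexity of $\Delta$ to control slab volumes uniformly. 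None of this scaffolding is present, so Step 1 is an outline rather than a proof of $S$- and $\lam_\max$-continuity.

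Second, and more seriously, Step 2 does not establish continuity of $\Fut$. By the definition in the paper,
\[
\Fut(\CX,\D_\CX,\xi_0;\eta)=\frac{D_{T_{\xi_0}(\eta)}\vol_{\CX_0}(\xi_0)}{\vol_{\CX_0}(\xi_0)},\qquad T_{\xi_0}(\eta)=\tfrac1n\big(A(\xi_0)\eta-A(\eta)\xi_0\big),
\]
so one needs continuity of the \emph{directional derivative} of $\vol_{\CX_0}$ in $\xi_0$, i.e.\ $C^1$-regularity of $\vol_{\CX_0}$ on the Reeb cone. Your justification --- that the mass of the half-space $\{\la\alpha,\xi_0\ra\le1\}$ under a fixed Radon measure ``is real-analytic (indeed rational) in $\xi_0$'' --- is false as a general statement: the mass of a moving half-space under a fixed measure (even Lebesgue measure on a convex body) is only continuous in general, and is certainly not rational unless the body is a polytope. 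The $C^1$ (indeed real-analytic) regularity of $\xi\mapsto\vol(\wt_\xi)$ on the Reeb cone is a nontrivial theorem in its own right (analytically due to Martelli--Sparks--Yau and Collins--Sz\'ekelyhidi, algebraically due to Li and Li--Xu); it cannot be obtained as a corollary of the weak convergence of counting measures. Without invoking that result, or reproving it, your argument only shows continuity of $\vol_{\CX_0}$, which does not give continuity of $\Fut$.
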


\begin{cor}
\label{Corollary. Fut ge Ding}
Let $(X,\D,\xi_0)$ be a log Fano cone singularity. For any test configuration $(\CX, \D_\CX,\xi_0;\eta)$
of $(X,\D,\xi_0)$, we have 
$\Fut(\CX, \D_\CX,\xi_0;\eta) \ge \BD(\CX, \D_\CX,\xi_0;\eta)$. 

Moreover, the equality holds if $(\CX, \D_\CX,\xi_0;\eta)$ is weakly special. 
\end{cor}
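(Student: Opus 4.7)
The plan is to reduce to the Ding--Futaki inequality on log Fano varieties via quasi-regularization and then propagate to general $\xi_0$ by continuity in the Reeb vector.

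First I would handle the quasi-regular case. Assume $\xi_0 \in N$, so that $(X,\D,\xi_0)$ is the cone $C(V,\D_V;L_0)$ over a log Fano pair $(V,\D_V)$ with ample $\IQ$-divisor $L_0$ satisfying $-(K_V+\D_V) \sim_\IQ A(\xi_0) L_0$. A test configuration $(\CX,\D_\CX,\xi_0;\eta)$ of the cone then arises (up to rescaling) from a $\IQ$-Gorenstein test configuration $(\CV,\D_\CV;\eta)$ of $(V,\D_V)$. By Theorem \ref{Theorem. Fut=Fut}, $\Fut(\CX,\D_\CX,\xi_0;\eta) = \Fut(\CV,\D_\CV;\eta)$, and by Corollary \ref{Corollary. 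D=D} applied to $\CF := \CF_{(\CX,\D_\CX;\eta)}$ (which, via Lemma \ref{Lemma. filtration correspondence of quasi-regular cone}, restricts to the filtration on $R(L_0)$ induced by $(\CV,\D_\CV;\eta)$), we have $\BD(\CX,\D_\CX,\xi_0;\eta) = \BD_{V,\D_V}(\CF)$. The Ding--Futaki inequality $\Fut \ge \BD$ on the log Fano side, together with the equality for weakly special test configurations, is classical (see e.g.\ \cite{Xu24}), which settles the quasi-regular case.

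Next I would pass to general $\xi_0 \in \Reeb$ by continuity. Choose rational vectors $\xi_0^{(k)} \in \Reeb \cap N_\IQ$ with $\xi_0^{(k)} \to \xi_0$, and after rescaling by positive integers assume $\xi_0^{(k)} \in N$. The underlying test configuration $(\CX,\D_\CX;\eta)$ and its filtration $\CF$ do not depend on the polarization, so for each $k$ the quasi-regular case gives
\begin{eqnarray*}
\Fut(\CX,\D_\CX,\xi_0^{(k)};\eta) \,\,\,\ge\,\,\, \lct(X,\D;\CF) - A(\xi_0^{(k)})\, S(\xi_0^{(k)};\CF).
\end{eqnarray*}
The quantity $\lct(X,\D;\CF)$ is independent of the polarization; $A(\xi)$ is linear on $\Reeb$ by Lemma \ref{Lemma. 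A(v_xi)=A(v)+A(xi)}; and $\xi \mapsto \Fut(\CX,\D_\CX,\xi;\eta)$ together with $\xi \mapsto S(\xi;\CF)$ are continuous on $\Reeb$ by Theorem \ref{Theorem. Fut is continuous w.r.t xi_0}. Letting $k \to \infty$ yields $\Fut(\CX,\D_\CX,\xi_0;\eta) \ge \BD(\CX,\D_\CX,\xi_0;\eta)$.

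Finally, for the equality in the weakly special case, observe that the condition that $(\CX,\D_\CX+\CX_0)$ be lc is intrinsic to the test configuration and does not involve $\xi_0$, so the rational approximations $\xi_0^{(k)}$ all give rise to weakly special quasi-regular test configurations. The quasi-regular step then provides equality for every $k$, and the same continuity invoked above forces $\Fut = \BD$ at the limit. The principal subtlety is the continuity statement (Theorem \ref{Theorem. Fut is continuous w.r.t xi_0}), which is what makes the quasi-regular reduction strategy work outside the rational Reeb locus; everything else is bookkeeping between the cone and its base Fano.
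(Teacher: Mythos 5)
Your proof follows essentially the same route as the paper: reduce to the quasi-regular case via Corollary \ref{Corollary. D=D} and Theorem \ref{Theorem. Fut=Fut}, invoke the Fano-side inequality (the paper cites \cite[Proposition 7.32]{BHJ17}, you cite the equivalent statement in \cite{Xu24}), and then extend to irrational $\xi_0$ by the continuity in the Reeb parameter from Theorem \ref{Theorem. Fut is continuous w.r.t xi_0}. The only difference is that you spell out the bookkeeping (linearity of $A(\xi)$, polarization-independence of $\CF$ and $\lct$, and the weakly special equality case) which the paper leaves implicit; the underlying argument is the same.
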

\begin{proof}
When $\xi_0\in \Reeb \cap N_\IQ$, it follows from Corollary \ref{Corollary. D=D}, Theorem \ref{Theorem. Fut=Fut} and \cite[Proposition 7.32]{BHJ17}. Hence it holds for all $\xi_0\in\Reeb$ by Theorem \ref{Theorem. Fut is continuous w.r.t xi_0}.  
\end{proof}

\begin{cor}[Equivalence of norms]
\label{Corollary. Equivalence of norms}
Let $n=\dim X$. 
For any $w\in \Val_{X,x}^{\IT,*}$, let $\CF=\CF_w$. We have 
\begin{eqnarray}
\label{Eqnarray. S - lam_min bounded by lam_max - lam_min}
\frac{1}{n} (\lam_\max-\lam_\min)(\xi_0;\CF) 
 &\le\,\,\, (S-\lam_\min)(\xi_0;\CF) &\le\,\,\, 
 (1-\frac{1}{n}) (\lam_\max-\lam_\min)(\xi_0;\CF), \\
\label{Eqnarray. S - lam_min bounded by J}
 \frac{1}{n-1} \BJ(\xi_0;\CF) 
 &\le\,\,\, (S-\lam_\min)(\xi_0;\CF) &\le\,\,\, 
 (n-1) \BJ(\xi_0;\CF).
\end{eqnarray}
\end{cor}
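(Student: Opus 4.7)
The plan is to first derive \eqref{Eqnarray. S - lam_min bounded by J} from \eqref{Eqnarray. S - lam_min bounded by lam_max - lam_min} by elementary algebra, and then to concentrate on proving \eqref{Eqnarray. S - lam_min bounded by lam_max - lam_min}. Writing $A := (S-\lam_\min)(\xi_0;\CF)$, $B := (\lam_\max-\lam_\min)(\xi_0;\CF)$, and observing that $\BJ(\xi_0;\CF)=B-A$, the inclusion $A/B\in[\tfrac{1}{n},1-\tfrac{1}{n}]$ furnished by \eqref{Eqnarray. S - lam_min bounded by lam_max - lam_min} forces $\BJ/A=B/A-1\in[\tfrac{1}{n-1},n-1]$, which is exactly \eqref{Eqnarray. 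S - lam_min bounded by J}.

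For \eqref{Eqnarray. S - lam_min bounded by lam_max - lam_min}, my strategy is to reduce to the quasi-regular case and then to the classical Fujita-type equivalence of norms on a polarized variety. The first step is to check that each of the three functions $\xi_0\mapsto \lam_\min(\xi_0;\CF_w),\lam_\max(\xi_0;\CF_w),S(\xi_0;\CF_w)$ is continuous on $\Reeb$. The latter two are handled by Theorem \ref{Theorem. Fut is continuous w.r.t xi_0}, while for $\lam_\min(\xi_0;\CF_w)=w(\CF_{\wt_{\xi_0}})$ I would use the $\IT$-invariance of $w$ to express this as the infimum of the piecewise-linear functionals $\xi_0\mapsto w(R_\alpha)/\la\alpha,\xi_0\ra$ over the weights $\alpha$ with $R_\alpha\ne 0$, and invoke the linearly bounded condition on $\CF_w$ to confine the effective range of $\alpha$ to a finite set on any compact subset of $\Reeb$. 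By density of $\Reeb\cap N_\IQ$ in $\Reeb$, it then suffices to prove the inequality for rational $\xi_0$, and after rescaling we may assume $\xi_0\in N$.

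In the quasi-regular case I would apply Lemma \ref{Lemma. valuation correspondence on quasi-regular cone} to write $w=v_{b\xi_0}$ for some $v\in\Val_V^{\IT_V,*}$ and $b>0$, where $V=\Proj R^{\xi_0}$ is the base Fano pair. By Lemma \ref{Lemma. filtration correspondence of quasi-regular cone}, the restriction of $\CF_w$ to $R(L_0)\seq R$ is the $b$-shift of the valuation filtration $\CF_v$ on $R(L_0)$, and the three quantities $\lam_\min,\lam_\max,S$ match between the Fano-cone side (with respect to $\xi_0$) and the polarized-variety side (with respect to $L_0$). Since the three differences appearing in \eqref{Eqnarray. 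S - lam_min bounded by lam_max - lam_min} are invariant under shifts, the problem reduces to the analogous inequality for the valuation filtration $\CF_v$ on the polarized variety $(V,L_0)$ of dimension $n-1$. This is the classical Fujita-type equivalence of norms: realising $S(\CF_v)$ via the Okounkov body of $L_0$ as the average of a concave function with values in $[\lam_\min,\lam_\max]$ on an $(n-1)$-dimensional convex body (see e.g.\ \cite[Chapter 3]{Xu24}), the standard cone-volume estimate yields both $\tfrac{1}{n}(\lam_\max-\lam_\min)\le S-\lam_\min$ and $S-\lam_\min\le \tfrac{n-1}{n}(\lam_\max-\lam_\min)$.

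The hard part will be establishing the continuity of $\xi_0\mapsto \lam_\min(\xi_0;\CF_w)$ on the full Reeb cone, rather than just on its rational locus, which requires careful bookkeeping with the $\IT$-invariance of $w$ and the linearly bounded condition on $\CF_w$. Once this continuity is in hand, the quasi-regular reduction and the Okounkov-body estimate are routine.
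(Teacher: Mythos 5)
Your proposal follows essentially the same route as the paper's proof: the algebraic passage from \eqref{Eqnarray. S - lam_min bounded by lam_max - lam_min} to \eqref{Eqnarray. S - lam_min bounded by J}, the reduction to rational $\xi_0$ (followed by continuity), the identification of $\CF_w$ with a shifted valuation filtration on the quotient Fano pair via Lemmas \ref{Lemma. filtration correspondence of quasi-regular cone} and \ref{Lemma. valuation correspondence on quasi-regular cone}, and the invocation of the Fujita/Okounkov-body estimate $\frac1n\lam_\max(L_0;\CF_v)\le S(L_0;\CF_v)\le(1-\frac1n)\lam_\max(L_0;\CF_v)$ on the base. The paper cites this last estimate to \cite[Lemma 2.6]{BJ20} and \cite[Lemma 4.1]{AZ22} rather than rederiving it from the Okounkov body, but that is a cosmetic difference. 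So the structure is the same.

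Where your proposal departs is in explicitly worrying about the continuity of $\xi_0\mapsto\lam_\min(\xi_0;\CF_w)$ and sketching an argument for it. The worry is legitimate: both sides of \eqref{Eqnarray. S - lam_min bounded by lam_max - lam_min} and \eqref{Eqnarray. S - lam_min bounded by J}, once unpacked to $\frac1n\lam_\max+(1-\frac1n)\lam_\min\le S\le(1-\frac1n)\lam_\max+\frac1n\lam_\min$, involve $\lam_\min$, and Theorem \ref{Theorem. Fut is continuous w.r.t xi_0} as stated only gives continuity of $\Fut$, $S$, $\lam_\max$. Upper semicontinuity of $\lam_\min$ is automatic (it is $\inf_m w(\CF_{\wt_{\xi_0}}^m)/m$, an infimum of continuous functions), which extends the upper bound in the inequality, but the lower bound needs lower semicontinuity as well. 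However, the mechanism you propose for this --- ``confining the effective range of $\alpha$ to a finite set'' --- does not work: $\lam_\min(\xi_0;w)=\inf_{\alpha}w(R_\alpha)/\la\alpha,\xi_0\ra$ is an infimum over the infinite monoid of nonzero weights, and the infimum is in general only approached along escaping sequences of $\alpha$ (already for $w=\wt_\eta$ with $\eta$ irrational, the infimum is attained in the limit along a ray direction that need not correspond to any single weight). Linear boundedness gives two-sided bounds $c\la\alpha,\xi_0\ra\le w(R_\alpha)\le C\la\alpha,\xi_0\ra$, but does not truncate the index set. What one actually needs is a uniform estimate on the unit sphere of the weight cone (e.g., via the concave sublinear function $\alpha\mapsto w(R_\alpha)$ and a compactness argument, or via the Okounkov-body picture of \cite{Wu22}), or to observe that the cited sources for Theorem \ref{Theorem. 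Fut is continuous w.r.t xi_0} presumably cover the full list of invariants. In short: your outline matches the paper's, and you correctly identify a subtlety that the paper itself does not spell out, but the specific fix you sketch is not correct as stated.
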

\begin{proof}
If $\xi_0\in \Reeb \cap N_\IQ$, we simply assume that $\xi_0\in N$ by rescaling since the above invariants are all homogeneous of degree $-1$ in $\xi_0$. We use notions as Lemma \ref{Lemma. filtration correspondence of quasi-regular cone}. Then by Lemma \ref{Lemma. valuation correspondence on quasi-regular cone}, there exists $v\in\Val_{V}^{\IT_V, *}$ and $b>0$ such that $w=v_{b\xi_0}$. By \cite[Lemma 2.6]{BJ20} and \cite[Lemma 4.1]{AZ22}, we have
\begin{eqnarray}
\label{Eqnarray. Fano case: norm equivalence}
\frac{1}{n} \lam_\max(L_0;\CF_v) 
 \le S(L_0;\CF_v) \le 
 (1-\frac{1}{n})\lam_\max(L_0;\CF_v). 
\end{eqnarray}
On the other hand, since $\CF_{v_{b\xi_0}} = \CF_{v,b\xi_0} = \CF_v(b)R(L_0)$, we have  $S(\xi_0; v_{b\xi_0}) = S(L_0; v) + b$, and $\lam_\max(\xi_0; v_{b\xi_0}) = \lam_\max(L_0; v) + b$ by Lemma \ref{Lemma. filtration correspondence of quasi-regular cone}. Note that $\lam_\min(\xi_0;v_{b\xi_0}) 
=b$. We get (\ref{Eqnarray. S - lam_min bounded by lam_max - lam_min}). Considering $(\lam_\max-\lam_\min)(\xi_0;\CF) -$ (\ref{Eqnarray. S - lam_min bounded by lam_max - lam_min}), we get (\ref{Eqnarray. S - lam_min bounded by J}). 

We conclude for any $\xi_0\in\Reeb$ by applying Theorem \ref{Theorem. Fut is continuous w.r.t xi_0}. 
\end{proof}

\section{Reduced uniform stability}
\label{Section. Reduced uniform stability}

In this section, we introduce the notion of reduced uniform K/Ding-stability of a log Fano cone, and prove that they are equivalent to K/Ding-polystability studied in the previous sections.

\subsection{Reduced uniform K/Ding-stability}
Let $(X,\D,\xi_0)$ be a log Fano cone, and $\CF$ be a $\IT$-invariant linearly bounded filtration on $R$. 
The {\it reduced $J$-norm} of $\CF$ is defined by
\begin{eqnarray}
\BJ_\IT(\xi_0;\CF) = \inf_{\xi\in \Bt_\IR^+} \BJ(\xi_0;\CF_\xi). 
\end{eqnarray}

\begin{defi}[Reduced uniform K/Ding-stability]\rm
\label{Definition. Ding-stability for filtrations}
Let $(X,\D,\xi_0)$ be a log Fano cone. Assume that $\IT\seq \Aut(X,\D,\xi_0)$ is a maximal torus. Then $(X,\D,\xi_0)$ is called {\it reduced uniformly Ding-stable} for filtrations (resp. normal test configurations, special test configurations) if there exists $\eta>0$ such that 
$$\BD(\CF)\ge \eta \BJ_\IT(\xi_0;\CF)$$ 
for any $\IT$-invariant linearly bounded filtration $\CF$ (resp. any filtration $\CF$ of $\IT$-equivariant normal test configuration, any filtration $\CF$ of $\IT$-equivariant special test configuration). 

Replacing $\BD$ by $\Fut$, we get the definition of {\it reduced uniform K-stability} for normal test configurations (resp. special test configuration). 
\end{defi}

\begin{thm}
\label{Corollary. S_T bounded by J_T}
For any $w\in\Val_{X,x}^{\IT, *}$, we have
\begin{eqnarray}
 \frac{1}{n-1} \BJ_\IT(\xi_0;w) 
 \le \inf_\xi S(\xi_0;w_\xi) \le 
 (n-1) \BJ_\IT(\xi_0;w), 
\end{eqnarray}
where the infimum runs over all $\xi\in N_\IR$ such that $w_\xi \in \Val_{X,x}^{\IT,*}$ (equivalently, $\lam_\min(\xi_0;w_\xi)>0$). 
\end{thm}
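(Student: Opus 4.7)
The plan is to deduce the reduced equivalence of norms directly from its non-reduced counterpart (Corollary \ref{Corollary. Equivalence of norms}) by applying the latter to each twisted valuation $w_\xi$ and then taking the infimum over the admissible parameter $\xi$. For every $\xi \in N_\IR$ with $w_\xi \in \Val_{X,x}^{\IT,*}$ (equivalently $\lam_\min(\xi_0;w_\xi)>0$), Corollary \ref{Corollary. Equivalence of norms} applied to $w_\xi$, together with the identification $\CF_{w_\xi} = (\CF_w)_\xi$ from Lemma \ref{Lemma: twist of valuations and filtrations}, yields
$$\frac{1}{n-1}\,\BJ(\xi_0;\CF_{w_\xi}) \;\le\; (S-\lam_\min)(\xi_0;w_\xi) \;\le\; (n-1)\,\BJ(\xi_0;\CF_{w_\xi}).$$
The remaining work is then purely formal: I need to pass these pointwise bounds through $\inf_\xi$ and to identify the resulting infima with $\inf_\xi S(\xi_0;w_\xi)$ on the left and $\BJ_\IT(\xi_0;w)$ on the right.

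The key observation is the transformation of all four quantities under the one-parameter family of shifts $\xi \mapsto \xi + a\xi_0$. Combining Lemma \ref{Lemma. S(F_xi)=S(F)+S(xi)} (with the normalization $S(\xi_0;\xi_0)=1$), Lemma \ref{Lemma: S and T, twist by xi_0}, and Lemma \ref{Lemma. lam_min xi_0 twist}, each of $S(\xi_0;w_\xi)$, $\lam_\max(\xi_0;\CF_{w_\xi})$, and $\lam_\min(\xi_0;w_\xi)$ increases by $a$ under this shift on the admissible range $a>-\lam_\min(\xi_0;w_\xi)$, hence both $\BJ = \lam_\max - S$ and $S - \lam_\min$ are invariant along the $\xi_0$-orbits. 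For a fixed admissible $\xi$, letting $a\to -\lam_\min(\xi_0;w_\xi)^+$ keeps $w_{\xi+a\xi_0}$ admissible while driving $S(\xi_0;w_{\xi+a\xi_0}) = S(\xi_0;w_\xi)+a$ down to $(S-\lam_\min)(\xi_0;w_\xi)$; this gives
$$\inf_\xi S(\xi_0;w_\xi) \;=\; \inf_\xi (S-\lam_\min)(\xi_0;w_\xi).$$
The same shift invariance, together with the observation that any admissible $\xi$ can be pushed into $\Reeb$ by adding a sufficiently large positive multiple of $\xi_0$, ensures that the defining infimum $\BJ_\IT(\xi_0;w) = \inf_{\xi\in\Reeb}\BJ(\xi_0;\CF_{w_\xi})$ agrees with the infimum of $\BJ(\xi_0;\CF_{w_\xi})$ over the full admissible domain $\{\xi : w_\xi\in\Val_{X,x}^{\IT,*}\}$.

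With these two identifications, taking $\inf_\xi$ throughout the inequality from Corollary \ref{Corollary. Equivalence of norms} gives
$$\frac{1}{n-1}\BJ_\IT(\xi_0;w) \;\le\; \inf_\xi(S-\lam_\min)(\xi_0;w_\xi) \;\le\; (n-1)\BJ_\IT(\xi_0;w),$$
which is exactly the claimed bound once $\inf_\xi(S-\lam_\min) = \inf_\xi S$ is used. Essentially no new geometric input beyond Corollary \ref{Corollary. Equivalence of norms} is required; the main obstacle is purely organizational, namely the bookkeeping needed to reconcile the slightly different domains of definition of the various infima, which is fully handled by the $\xi_0$-shift invariance of $\BJ$ and of $S-\lam_\min$.
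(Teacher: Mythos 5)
Your proof is correct and uses essentially the same strategy as the paper: apply the non-reduced equivalence of norms (Corollary \ref{Corollary. Equivalence of norms}) to each twist $w_\xi$, exploit the $\xi_0$-shift invariance of $\BJ$ and of $S-\lam_\min$ (via Lemmas \ref{Lemma. S(F_xi)=S(F)+S(xi)}, \ref{Lemma: S and T, twist by xi_0}, \ref{Lemma. lam_min xi_0 twist}), and let $\lam_\min(\xi_0;w_\xi)\to 0^+$ to pass from $S-\lam_\min$ to $S$. Your packaging via the two identities $\inf_\xi S = \inf_\xi(S-\lam_\min)$ and $\BJ_\IT = \inf_{\text{adm}}\BJ$ is slightly more explicit than the paper's presentation, which is fine; the one step you assert without verification is that every admissible $\xi$ can be pushed into $\Reeb$ by adding a large multiple of $\xi_0$ (which holds because linear boundedness of $\CF_w$ forces $w(R_\alpha)\le C\langle\alpha,\xi_0\rangle$, so $\xi+(C+1)\xi_0\in\Reeb$), but the paper is no more explicit about this point.
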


\begin{proof}
The first inequality follows directly from the first inequality in (\ref{Eqnarray. S - lam_min bounded by J}). 

For the second one, it suffices to show that for any $\vep>0$, there exists $\xi\in N_\IR$ such that $0< \lam_\min(\xi_0; w_\xi)<\vep$. Indeed, by the second inequality in (\ref{Eqnarray. S - lam_min bounded by J}), we have 
\begin{eqnarray*}
S(\xi_0;w_\xi) - \lam_\min(\xi_0; w_\xi) \le (n-1) \BJ(\xi_0;w_\xi). 
\end{eqnarray*}
Hence $S(\xi_0;w_\xi) \le (n-1) \BJ(\xi_0;w_\xi) + \vep. $ Taking infimum for $\xi$ and letting $\vep\to 0$, we will get the desired inequality. 
Now we prove the assumption. Fix $\vep>0$. Then $\lam_\min(\xi_0;w_{b\xi_0}) = \lam_\min(\xi_0;w) + b$ for any $b> -\lam_\min(\xi_0;w)$ by Lemma \ref{Lemma. lam_min xi_0 twist}. We conclude by choosing $b= \frac{1}{2}\vep -\lam_\min(\xi_0;w)$. 
\end{proof}

By Lemma \ref{Lemma: S and T, twist by xi_0}, we see that the function $\xi \to \BJ(\xi_0;\CF_\xi)$ decents to $N_\IR'=\Bt_\IR^+/(\xi_0)\to \IR$. Moreover, with the same argument of \cite[Proposition 6.6]{Xu24}, we have the following. 

\begin{lem}
\label{Lemma: J is convex w.r.t twist}
Let $\CF$ be a $\IT$-invariant linearly bounded filtration on $R$. 
Then the function $N_\IR'\to \IR, \bar{\xi}\mapsto \BJ(\xi_0;\CF_\xi)$ is convex. Moreover, there exists $C > 0$ such that   
\begin{eqnarray*}
\BJ(\xi_0;\CF_{\xi}) \ge 
\BJ(\xi_0; \xi)
-S(\xi_0;\CF), \quad 
\BJ(\xi_0; \xi) \ge C\cdot |\bar{\xi}|
\end{eqnarray*}
for any $\xi\in\Reeb$, where $|\cdot|$ is a Euclidean norm on $N_\IR'$.  
In particular, $\BJ(\xi_0; \xi) = 0$ if and only if $\bar{\xi}=0$ and
$\bar{\xi} \mapsto \BJ(\xi_0;\CF_\xi)$ admits a unique minimizer on $N_\IR'$. 
\end{lem}

\begin{proof}
For the convexity, since $\xi\mapsto S(\xi_0;\CF_\xi)$ is linear by Lemma \ref{Lemma. S(F_xi)=S(F)+S(xi)}, it suffices to show that $\xi\to \lam_\max(\xi_0; \CF_\xi)$ is convex. Let 
\begin{eqnarray*}
\label{}
\Gamma_m = \Gamma_m^{\xi_0} := \{ \alpha \in M\mid R_\alpha \ne 0, m-1 \le \la\alpha,\xi_0\ra <m \}. 
\end{eqnarray*}
By (\ref{Eqnarray. lam_max^alpha}), we have 
\begin{eqnarray}
\label{Eqnarray. lam_max^m}
\frac{1}{m} \lam_\max^{(m)}(\xi_0;\CF_\xi) = \frac{1}{m} \max\{\lam_\max^{(\alpha)}(\CF) +\la\alpha,\xi\ra \mid \alpha \in \Gamma_m \}, 
\end{eqnarray}
which is convex in $\xi$ (since the maximum of a family of linear functions is convex). Hence $\lam_\max(\xi_0;\CF_\xi) = \sup_m \frac{1}{m} \lam_\max^{(m)}(\xi_0;\CF_\xi)$ is convex in $\xi$. 
Since $\lam_{\max}^{(\alpha)}(\CF) \ge 0$ for any $\alpha$, we have 
\begin{eqnarray*}
\frac{1}{m}\lam_\max^{(m)}(\xi_0;\CF_\xi) \ge \frac{1}{m} \max\{\la\alpha,\xi\ra \mid \alpha \in \Gamma_m \} = \frac{1}{m}\lam_\max^{(m)}(\xi_0;\xi). 
\end{eqnarray*}
Taking $m\to \infty$ we get the first inequality. 

For the second inequality, since $S(\xi_0;-): \Bt_\IR^+ \to \IR_{>0}$ is linear, there exists $\alpha_0\in \interior(\sigma)$ such that $S(\xi_0;\xi) = \la\alpha_0, \xi\ra$ for any $\xi \in \Bt_\IR^+$. In particular, $\la\alpha_0, \xi_0\ra =S(\xi_0; \xi_0) = 1$. We denote by 
\begin{eqnarray*}
\BP= \BP^{\xi_0} := \{\alpha\in \sigma\mid \la\alpha,\xi_0\ra = 1\}. 
\end{eqnarray*}
Then $\alpha_0\in \interior(\BP)$. Let 
\begin{eqnarray}
\label{Eqnarray. dist alpha0 to partial P}
C = \dist(\alpha_0, \partial \BP) > 0, 
\end{eqnarray} 
be the Euclidean distance from $\alpha_0$ to $\partial \BP$. 
Then $\lam_\max(\xi_0;\xi) = \max_{\alpha\in \BP} \la\alpha, \xi\ra$ and 
\begin{eqnarray*}
\BJ(\xi_0;\xi) = \lam_\max(\xi_0;\xi) - S(\xi_0;\xi) = \max_{\alpha\in \BP} \la\alpha-\alpha_0, \bar{\xi}\ra \ge C \cdot |\bar{\xi}|, 
\end{eqnarray*}
where the second equality follows since $\la\alpha-\alpha_0, \xi_0\ra=0$. 
Hence $\BJ(\xi_0;\CF_{a\xi}) \ge aC|\bar{\xi}| - S(\xi_0;\CF) \to +\infty$ as $a\to +\infty$. In particular, $\bar{\xi}\mapsto \BJ(\xi_0;\CF_\xi)$ admits a unique minimizer on $N_\IR'$. 
\end{proof}

\begin{cor} \label{Corollary: J(CX)=0}  If $\BJ_\IT(\CF_{(\CX,\D_\CX,\xi_0;\eta)})=0$, then $(\CX,\D_\CX,\xi_0;\eta)$ is a product test configuration.  \end{cor}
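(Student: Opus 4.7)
The plan is to combine the convexity and properness of the reduced $\BJ$-functional established in Lemma~\ref{Lemma: J is convex w.r.t twist} with the rigidity statement of Corollary~\ref{Corollary: J(v)=0}, which says $\BJ(\CF_0;\CF)=0$ forces $\CF$ to be a rescaling of $\CF_0$. The twist $\CF_{\xi_*}$ achieving the infimum defining $\BJ_\IT$ will then be a rescaling of $\CF_{\wt_{\xi_0}}$, and untwisting will exhibit $\CF=\CF_{(\CX,\D_\CX,\xi_0;\eta)}$ as the filtration of a single toric valuation, which is precisely the characterization of a product-type test configuration.

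Concretely, I would first invoke Lemma~\ref{Lemma: J is convex w.r.t twist} to obtain the unique minimizer $[\xi_*]\in N_\IR'$ of $[\xi]\mapsto\BJ(\xi_0;\CF_\xi)$, with a representative $\xi_*\in\Bt_\IR^+$; the vanishing hypothesis gives $\BJ(\xi_0;\CF_{\xi_*})=0$. Next I would verify that $\CF_{\xi_*}$ is saturated. Since $\CF$ arises from a test configuration, the discussion following \cite[Lemma 2.21]{LWX18} yields the presentation $\CF^{ml}=\bigcap_j \fa_{ml}(v_{E_j})$ with $v_{E_j}=b_j^{-1}\ord_{E_j}$ obtained from the (automatically $\IT$-equivariant) normalized blowup of $\CF^m$, so each $v_{E_j}\in\Val^\IT_{X,x}$. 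Since twist commutes with intersections, Lemma~\ref{Lemma: twist of valuations and filtrations} gives $\CF_{\xi_*}=\bigcap_j \CF_{(v_{E_j})_{\xi_*}}$, and Lemma~\ref{Lemma. twist valuations to vertex} places each $(v_{E_j})_{\xi_*}$ back in $\Val^\IT_{X,x}$. Every valuation filtration is saturated (using $v(\CF_v)=1$), and an intersection of saturated filtrations is saturated, so $\CF_{\xi_*}$ is saturated. Then Corollary~\ref{Corollary: J(v)=0} produces $a>0$ with $\CF_{\xi_*}=a\CF_{\wt_{\xi_0}}=\CF_{\wt_{\xi_0/a}}$.

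Finally I would untwist this equality weight by weight: setting $\eta_0:=\xi_0/a-\xi_*\in N_\IR$, the identity $\CF_{\xi_*}^\lam R_\alpha=\CF^{\lam-\la\alpha,\xi_*\ra}R_\alpha$ together with $(a\CF_{\wt_{\xi_0}})^\lam R_\alpha=R_\alpha$ iff $\la\alpha,\xi_0/a\ra\ge\lam$ reads $\CF=\CF_{\wt_{\eta_0}}$. Since $\CF$ is an $\IN$-filtration coming from a test configuration, the jumps of $\wt_{\eta_0}$ must be integral, so $\la\alpha,\eta_0\ra\in\IZ$ for every $\alpha$ with $R_\alpha\ne 0$; effectiveness of the $\IT$-action forces these weights to span $M$, hence $\eta_0\in N$. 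By the remark following the definition of test configurations, the Rees algebra of $\CF_{\wt_{\eta_0}}$ is the coordinate ring of $(X\times\IA^1,\D\times\IA^1;\eta_0-t\partial_t)$, exhibiting $(\CX,\D_\CX,\xi_0;\eta)$ as a product test configuration. The main obstacle is the saturation of $\CF_{\xi_*}$: it is not formal that twisting a saturated filtration preserves saturation, since the saturation is defined via arbitrary (not just $\IT$-invariant) valuations, so the argument must exploit the very explicit realization of $\CF$ through the normalized blowup of $\CF^m$ coming from the test configuration structure.
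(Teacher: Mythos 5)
Your proposal is correct and follows essentially the same route as the paper: use the convexity Lemma~\ref{Lemma: J is convex w.r.t twist} to produce a minimizing twist $\xi_*$ with $\BJ(\xi_0;\CF_{\xi_*})=0$, then apply the rigidity Corollary~\ref{Corollary: J(v)=0} to identify $\CF_{\xi_*}$ with a rescaled toric filtration, and finally untwist. The paper's own proof is a one-line version of this, which handles the saturation of $\CF_{\xi_*}$ implicitly by identifying it with the filtration of the twisted test configuration $(\CX,\D_\CX,\xi_0;\eta+\xi_1)$ (whose filtration is automatically saturated by the discussion after Definition~\ref{Definition. filtrations of klt singularities}) — but since $\xi_1$ need not lie in $N$, that identification is itself a touch informal, and your explicit argument via the normalized blowup is the honest way to fill the gap. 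You are right to flag saturation of the twist as the crux.

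One small imprecision worth noting: the paper only establishes $\CF^{ml}=\bigcap_j \fa_{ml}(v_{E_j})$ at multiples of $m$, whereas the step $\CF_{\xi_*}=\bigcap_j \CF_{(v_{E_j})_{\xi_*}}$ silently uses the equality $\CF=\bigcap_j\CF_{v_{E_j}}$ at \emph{all} indices (the twist mixes degrees). This is fillable with one extra observation: both $\CF$ (by the paper's earlier discussion, via \cite[Lemma 3.19]{BLQ22}) and $\CG:=\bigcap_j\CF_{v_{E_j}}$ are saturated, and they agree at multiples of $m$; since $v(\CF)=\lim_l v(\CF^{ml})/(ml)=v(\CG)$ for every $v$, their saturations coincide, so $\CF=\CG$ at all indices. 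With that inserted, the rest of your argument — intersection of valuation filtrations is saturated, apply Corollary~\ref{Corollary: J(v)=0}, untwist to get $\CF=\CF_{\wt_{\eta_0}}$, and deduce $\eta_0\in N$ from integrality of the jumps — is sound and in fact supplies detail the paper leaves to the reader.
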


\begin{proof}
Let $\CF=\CF_{(\CX,\D_\CX,\xi_0;\eta)}$ and $\xi_1\in \Reeb$ be a minimizer of $ \bar{\xi}\mapsto \BJ(\xi_0;\CF_\xi)$. Then $\BJ(\xi_0;\CF_{\xi_1}) =0$. Hence $\CF_{(\CX,\D_\CX,\xi_0;\eta+\xi_1)} = \CF_{\xi_1} = a\CF_{\wt_{\xi_0}}$ for some $a>0$ by Corollary \ref{Corollary: J(v)=0}. 
\end{proof}

\begin{cor}
\label{Corollary. reduced uniform to polystable}
If $(X,\D,\xi_0)$ is reduced uniformly K/Ding-stable for test configurations, then it is K/Ding-polystable for normal test configurations. 
\end{cor}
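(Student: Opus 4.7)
The plan is to reduce the polystability statement to Corollary \ref{Corollary: J(CX)=0} by using the reduced uniform bound. The argument is essentially mechanical once one observes that $\BJ_\IT$ vanishes on any test configuration that achieves $\BD=0$ (or $\Fut=0$) under a reduced uniform hypothesis; the nontrivial content has already been packaged into Corollary \ref{Corollary: J(CX)=0}.

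Concretely, first I would handle the Ding case. Suppose $(X,\D,\xi_0)$ is reduced uniformly Ding-stable for normal test configurations, so there exists $\eta>0$ with $\BD(\CF_{(\CX,\D_\CX;\eta_0)})\ge \eta\,\BJ_\IT(\xi_0;\CF_{(\CX,\D_\CX;\eta_0)})$ for every $\IT$-equivariant normal test configuration $(\CX,\D_\CX,\xi_0;\eta_0)$. Since $\BJ_\IT\ge 0$, this already yields $\BD\ge 0$, i.e.\ Ding-semistability for normal test configurations, which is the first clause of Definition \ref{Definition. K/Ding polystability}. For the polystability clause, let $(\CX,\D_\CX,\xi_0;\eta_0)$ be a normal test configuration with $\BD(\CX,\D_\CX,\xi_0;\eta_0)=0$ and set $\CF=\CF_{(\CX,\D_\CX;\eta_0)}$. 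Then
\[
0=\BD(\CF)\ge \eta\,\BJ_\IT(\xi_0;\CF)\ge 0,
\]
so $\BJ_\IT(\xi_0;\CF)=0$. Corollary \ref{Corollary: J(CX)=0} then gives that $(\CX,\D_\CX,\xi_0;\eta_0)$ is of product type, which is exactly Ding-polystability for normal test configurations.

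The K-stability case runs along the same lines, except that one invokes Corollary \ref{Corollary. Fut ge Ding} in the background. Reduced uniform K-stability gives $\Fut(\CX,\D_\CX,\xi_0;\eta_0)\ge \eta\,\BJ_\IT(\xi_0;\CF)$, which immediately yields K-semistability for normal test configurations. If furthermore $\Fut=0$, then again $\BJ_\IT(\xi_0;\CF)=0$, and Corollary \ref{Corollary: J(CX)=0} forces the test configuration to be of product type.

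There is no serious obstacle in this argument: the entire substance sits in Corollary \ref{Corollary: J(CX)=0}, whose proof uses the convexity of $\BJ$ in the twist (Lemma \ref{Lemma: J is convex w.r.t twist}) together with the rigidity result Corollary \ref{Corollary: J(v)=0}. The only minor point to confirm in writing is that the filtration $\CF_{(\CX,\D_\CX;\eta_0)}$ attached to a normal test configuration is saturated and linearly bounded, so that the hypotheses of Corollary \ref{Corollary: J(CX)=0} actually apply; this was recorded in the discussion of test configurations in Section \ref{Section: Preliminaries}.
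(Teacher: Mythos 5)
Your argument is correct and is essentially the paper's own: divide by $\eta>0$ to deduce $\BJ_\IT=0$ from $\BD=0$ (resp.\ $\Fut=0$), then invoke Corollary \ref{Corollary: J(CX)=0} to conclude product type, with the semistability clause being immediate from $\BJ_\IT\ge 0$. The extra remarks you add about saturation and linear boundedness of $\CF_{(\CX,\D_\CX;\eta_0)}$ are appropriate bookkeeping but do not change the route.
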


\begin{proof}
Let $(\CX, \D_\CX, \xi_0; \eta)$ be a normal test configurations of $(X,\D,\xi_0)$ with vanishing $\Fut$ or $\BD$. Since $\Fut$ or $\BD \ge \delta \BJ_\IT$ for some $\delta > 0$, we see that $\BJ_\IT(\CX, \D_\CX, \xi_0; \eta)= 0$. 
Hence it is a product test configuration by Lemma \ref{Corollary: J(CX)=0}. 
\end{proof}

\subsection{Approximating sequences}
Following \cite[Definition 3.55]{Xu24}, we may approximate a $\IN$-filtration $\CF$ by a sequence of finitely generated $\IN$-filtrations $\{\CF_m\}_{m\ge 1}$ as follows. 

\begin{defi}[Approximating sequences]\rm 
\label{Definition. approximating sequences}
Let $\CF$ be a linearly bounded $\IN$-filtration on $R$. 
We define a sequence $\{\CF_{m}\}$ of linearly bounded $\IN$-filtrations on $R$ as follows, which is called the {\it approximating sequence} of $\CF$. For any $m \ge 1$, let 
\begin{enumerate}
\item $\CF^\lam_m := \CF^\lam$ for $\lam\le m$, 
\item $\CF_m^{\lam} := \sum_{(\mu_1, \cdots, \mu_s)} \CF^{\mu_1} \cdots \CF^{\mu_s}$ for $\lam > m$, 
\end{enumerate}
where the sum runs over all $s\in \IN_{\ge1}$ and $(\mu_1, \cdots, \mu_s) \in \IN^{s}$ satisfying $\mu_i\le m$ and $\mu_1+ \cdots+ \mu_s \ge \lam$. 
\end{defi}

It's clear that $\CF_m\seq \CF$ is a linearly bounded $\IN$-filtration on $R$. 
By definition, the corresponding (non-extended) Rees algebra $\CR= \CR(\CF_m) := \oplus_{k\in\IN} \CF_m^k\cdot t^{-k}$ of $\CF_m$ is finitely generated over $\CR_0 = R$. Hence for sufficiently divisible $d_m\in m\IN$, we have 
\begin{eqnarray}
\label{Eqnarray. a_dl = a_d^l of approximating sequences}
\CF_m^{d_ml} = (\CF_m^{d_m})^l. 
\end{eqnarray} 
for any $l\in \IN$, by the following well-known lemma (see for example \cite[Lemma 10.56.2]{stacks-project}). 
\begin{lem}
Let $\CR=\oplus_{k\in \IN}\CR_k$ be a graded ring, which is finitely generated over $\CR_0$. Then for all sufficiently divisible $d$, we have $\CR_{ld} = \CR_d^l$ for any $l\in\IN$. 
\end{lem}

\begin{rmk}\rm
For any $\IN$-filtration $\CF$, we have $\Rees(\CF)/t\Rees(\CF) \cong \Gr_\CF R$. The extended Rees algebra $\Rees(\CF)$ is generated (over $\Ik$) by the sub-$\Ik$-algebra $\CR(\CF) \seq \Rees(\CF)$ and sub-$\Ik$-module $t\cdot R$. Hence if $\CR(\CF)$ is finitely generated over $\Ik$, then $\Rees(\CF)$ (equivalently, $\Gr_\CF R$) is finitely generated over $\Ik$. But the converse may not hold. 
\end{rmk}

\begin{cor}
\label{Corollary. lam_max convergence. approximating sequences}
Let $v_0\in\Val_{X,x}^*$ and $\CF$ be a linearly bounded $\IN$-filtration on $R$. Let $\{\CF_m\}$ be the approximating sequence of $\CF$. Then we have 
\begin{eqnarray}
\label{Eqnarray. convergence of lam_max(F_m), approximating sequence}
\mathop{\lim}_{m\to\infty} \lam_\max(v_0;\CF_m) = \lam_\max(v_0;\CF). 
\end{eqnarray}
\end{cor}
\begin{proof}
Since $\CF_m\seq \CF$, we have $\lam_\max(v_0;\CF_m)\le \lam_\max(v_0;\CF)$. 
By (\ref{Eqnarray. a_dl = a_d^l of approximating sequences}), we have 
$$\lam_\max(v_0;\CF_m) = \frac{1}{d_m} \lam_\max^{(d_m)}(v_0;\CF_m) \ge \frac{1}{m} \lam_\max^{(m)}(v_0;\CF_m) = \frac{1}{m} \lam_\max^{(m)}(v_0;\CF). $$
Taking $m\to \infty$, the proof is finished. 
\end{proof}

Let $(X,\D,\xi_0)$ be a log Fano cone with a good $\IT$-action. Then we also have the following convergence of $S$-invariants. 

\begin{thm}
\label{Theorem. S convergence. approximating sequences}
Let $\CF$ be a $\IT$-invariant linearly bounded $\IN$-filtration on $R$ and $\{\CF_m\}$ be the approximating sequence of $\CF$. Then we have 
\begin{eqnarray}
\label{Eqnarray. convergence of S(F_m), approximating sequence}
\mathop{\lim}_{m\to\infty} S(\xi_0 ;\CF_m) = S(\xi_0 ;\CF). 
\end{eqnarray}
\end{thm}

In order to prove the theorem, we need the following local version of \cite[Corollary 2.10]{BJ20}, which relies on the theory of Okounkov bodies of $R$. 
We will give a proof in the appendix for the reader's convenience. For any quasi-monomial valuation $v_0\in\Val_{X,x}^*$ and linearly bounded filtration $\CF$, we set $N_m=\ell(R/\CF_{v_0}^m)$ and
$$S''_m(v_0;\CF) := \frac{n+1}{n}\cdot\frac{\tS(v_0;\CF)}{mN_m}. $$
By (\ref{Eqnarray. N_m and tS(F0;F0)}), we have $\lim_{m\to \infty} \frac{\tS(v_0;v_0)}{m N_m} = \frac{n}{n+1}$. Hence $\lim_{m\to \infty} S''_m(v_0;\CF) = S(v_0;\CF)$.

\begin{lem}\label{Lemma. Blum-Jonsson uniform bound of S_m}
For any quasi-monomial valuation $v_0\in\Val_{X,x}^*$ and $\vep>0$, there exists an integer $m_0 = m_0(v_0,\vep) > 0$ such that 
\begin{eqnarray}
\label{Eqnarray. uniform estimate of S_m}
S''_m(v_0;\CF) \le (1+\vep)S(v_0;\CF)
\end{eqnarray}
for any linearly bounded filtration $\CF$ and any $m\ge m_0$. 
\end{lem}

\begin{proof}[Proof of Theorem \ref{Theorem. S convergence. approximating sequences}]
We follow the proof of \cite[Lemma 3.57]{Xu24}. 
Since $\CF_m\seq \CF$, we have $S(\xi_0 ;\CF_m)\le S(\xi_0 ;\CF)$. It suffices to show 
$$\underline{S} = \mathop{\liminf}_{m\to\infty}S(\xi_0 ;\CF_m)\ge S(\xi_0 ;\CF). $$
Since $\CF$ and $\CF_{\wt_{\xi_0}}$ are linearly bounded, there exists an integer $C>0$ such that $\CF^{Cm} \seq \CF_{\wt_{\xi_0}}^m$ for any $m\in\IN$. Hence 
$$\CF^k_{Cm}(R/\CF_{\wt_{\xi_0}}^m) = \CF^k(R/\CF_{\wt_{\xi_0}}^m), \quad \forall k\in \IN. $$
In particular, $S''_m(\xi_0 ;\CF_{Cm}) = S''_m(\xi_0 ;\CF)$. Pick a sequence of positive numbers $\vep_k\to 0$ as $k\to\infty$. By Lemma \ref{Lemma. Blum-Jonsson uniform bound of S_m}, there exists a sequence of positive integers $m_k=m_0(\wt_{\xi_0},\vep_k)$ such that 
$$S''_{m}(\xi_0 ;\CG) \le (1+\vep_k)S(\xi_0 ;\CG), \quad \forall m\ge m_k $$
for any linearly bounded filtration $\CG$. 
Let $n_k$ be a sequence of positive integers such that 
$$\mathop{\lim}_{k\to\infty}S(\xi_0 ;\CF_{Cn_k}) = \underline{S}.$$
By taking a subsequence, we may assume that $n_k\ge m_k$. Hence 
$$S''_{n_k}(\xi_0 ;\CF) = S''_{n_k}(\xi_0 ;\CF_{Cn_k}) \le (1+\vep_k)S(\xi_0 ;\CF_{Cn_k}). $$
Taking $k\to \infty$, we get $S(\xi_0;\CF) \le \underline{S}$. The proof is finished. 
\end{proof}

\begin{thm}
\label{Theorem. convergence of J_T, approximating sequence}
Let $\CF$ be a $\IT$-invariant linearly bounded $\IN$-filtration on $R$ and $\{\CF_m\}$ be the approximating sequence of $\CF$ (Definition \ref{Definition. approximating sequences}). Then $\CF_m$ is also $\IT$-invariant and $\{\CF_{m,\xi}\}$ is the approximating sequence of $\CF_\xi$ for any $\xi\in\Bt^+_\IR$. Moreover, we have 
\begin{eqnarray}
\label{Eqnarray. convergence of J_T(F_m), approximating sequence}
\mathop{\lim}_{m\to\infty} \BJ_\IT(\xi_0;\CF_m) = \BJ_\IT(\xi_0;\CF). 
\end{eqnarray}
\end{thm}

\begin{proof}
We follow the proof of \cite[Proposition 6.29]{Xu24}. The filtration $\CF_m$ is $\IT$-invariant since $\CF_m^k = \CF^k$ is $\IT$-invariant for any $0\le k\le m$, and $\CF_m^\lam$ is generated by these ideals for any $\lam\in\IN$. Since 
$$\CF_m^k 
= \CF^k 
= \oplus_{\alpha} \CF^k R_\alpha 
= \oplus_{\alpha} \CF_m^k R_\alpha, \quad 
\forall 0\le k\le m, $$
we have
$$\CF_{m,\xi}^k 
= \oplus_\alpha \CF_{m}^{k-\la\alpha,\xi\ra} R_\alpha
= \oplus_\alpha \CF^{k-\la\alpha,\xi\ra} R_\alpha
= \CF_\xi^k, \quad 
\forall 0\le k\le m,$$
where the second equality follows from $\xi\in\Bt_\IR^+$ (hence $\la\alpha, \xi\ra>0$). Checking directly we see that $\CF_{m,\xi}^\lam$ is generated by these ideals for any $\lam\in\IN$. Hence $\{\CF_{m,\xi}\}$ is the approximating sequence of $\CF_\xi$. 

We denote by $f, f_m: N_\IR' \to \IR$ the functions: 
\begin{eqnarray*}
\label{}
f(\bar{\xi}) = \BJ(\xi_0;\CF_\xi), \quad 
f_m(\bar{\xi}) = \BJ(\xi_0;\CF_{m,\xi}). 
\end{eqnarray*}
By Corollary \ref{Corollary. lam_max convergence. approximating sequences} and Theorem \ref{Theorem. S convergence. approximating sequences}, we have pointwise convergence: 
\begin{eqnarray}
\label{Eqnarray. pointwise convergence J(F_m,xi) to J(F_xi)}
\mathop{\lim}_{m\to\infty} f_m(\bar{\xi})= f(\bar{\xi})
\end{eqnarray}
for any $\xi\in\Bt_\IR^+$. 
Let $e = S(\xi_0;\CF) \ge S(\xi_0;\CF_m)$. Then by Lemma \ref{Lemma: J is convex w.r.t twist}, we have
\begin{eqnarray*}
\label{}
f(\bar{\xi}) \ge C|\bar{\xi}| - e, \quad 
f_m(\bar{\xi}) \ge C|\bar{\xi}| - e
\end{eqnarray*}
for any $\xi \in \Bt_\IR^+$, where $C$ is the Euclidean distance from $\alpha_0\in \interior(\BP)$ to $\partial \BP$ given by (\ref{Eqnarray. dist alpha0 to partial P}). Hence there exists a compact subset $K\seq N_\IR'$ such that the minimizers of $f$ and $f_m$ are all contained in $K$. On the other hand, we have 
\begin{eqnarray*}
\label{}
|f_m(\xi_1) - f_m(\xi_2)| \le 2C |\bar{\xi_1} - \bar{\xi_2}|
\end{eqnarray*}
for sufficiently large $m\in\IN$ and any $\xi_1,\xi_2\in\Bt_\IR^+$. 
Hence by the Arzel\`a-Ascoli theorem, the convergence (\ref{Eqnarray. pointwise convergence J(F_m,xi) to J(F_xi)}) is uniform on $K$. In particular, $\lim_{m\to\infty}$ commutes with $\min_{\bar{\xi}\in K}$: 
\begin{eqnarray*}
\label{}
\mathop{\lim}_{m\to\infty} \BJ_\IT(\xi_0;\CF_m) 
= \mathop{\lim}_{m\to\infty} \mathop{\min}_{\bar{\xi}\in K} f_m(\bar{\xi}) 
= \mathop{\min}_{\bar{\xi}\in K} \mathop{\lim}_{m\to\infty} f_m(\bar{\xi}) 
= \mathop{\min}_{\bar{\xi}\in K} f(\bar{\xi}) 
= \BJ_\IT(\xi_0;\CF). 
\end{eqnarray*}
\end{proof}

\begin{cor}
\label{Corollary. reduced uniform Ding for filtrations and finitely generated filtrations}
Let $(X,\D,\xi_0)$ be a log Fano cone singularity. Then it is reduced uniformly Ding-stable for filtrations if and only if it is reduced uniformly Ding-stable for finitely generated $\IN$-filtrations. 
\end{cor}
\begin{proof}
If $(X,\D,\xi_0)$ is not reduced uniformly Ding-stable for filtrations, then for any $\eta>0$, there exists a $\IT$-invariant linearly bounded $\IN$-filtration $\CF$ such that $\BD(\CF) < \eta \BJ_\IT(\xi_0;\CF)$. Let $\{\CF_m\}$ be the approximating sequence of $\CF$, whose members are finitely generated by (\ref{Eqnarray. a_dl = a_d^l of approximating sequences}). Then we have 
\begin{eqnarray*}
\label{}
\mathop{\lim}_{m\to\infty} \BD(\CF_m) = \BD(\CF), \quad 
\mathop{\lim}_{m\to\infty} \BJ_\IT(\xi_0;\CF_m) 
= \BJ_\IT(\xi_0;\CF)
\end{eqnarray*}
by Theorem \ref{Theorem. S convergence. approximating sequences} and Theorem \ref{Theorem. convergence of J_T, approximating sequence}. Hence $\BD(\CF_m) < \eta \BJ_\IT(\xi_0;\CF_m)$ for sufficiently large $m\in\IN$. 
\end{proof}

\begin{lem}
\label{Lemma. reduced uniform Ding. f.g. filtrations = weakly special divisors}
Let $(X,\D,\xi_0)$ be a log Fano cone singularity. If it is reduced uniformly Ding-stable for Koll\'ar components, then it is reduced uniformly Ding-stable for finitely generated filtrations. 
\end{lem}

\begin{proof}
By assumption, there exists $0<c<A(\xi_0)$ such that $\BD(v)-c\BJ_\IT(\xi_0; v) \ge 0$ for any $\IT$-invariant Koll\'ar component $v$. 
Since the product-type valuations $\wt_{\xi}$ for $\xi\in \Bt_\IR^+\cap N_\IQ$ are Koll\'ar components, we see that $\BD(\bar{\xi}) = \BD(\xi) = \BD(\wt_\xi) \ge 0$ for any $\xi\in \Bt_\IR^+\cap N_\IQ$. However, $\bar{\xi} \mapsto \BD(\bar{\xi})$ is linear on $N_\IR' \cong \Reeb/\la\xi_0\ra$, we see that $\BD(\xi)=0$ for any $\xi\in\Reeb$. In particular, $\BD(\CF_\xi)=\BD(\CF)+\BD(\xi) = \BD(\CF)$ for any linearly bounded filtration $\CF$ and any $\xi \in \Reeb$. 

Let $\CF$ be a $\IT$-invariant finitely generated $\IN$-filtration, that is, there exists $d \in \IN$ such that $(\CF^{d})^k = \CF^{kd}$ for any $k\in\IN$. Then by \cite[Lemma 1]{Xu14}, there exists a $\IT$-invariant Koll\'ar component $v$ minimizing $\lct(\CF) = d\cdot\lct(\CF^{d})$. So we have $\CF\seq \CF_v$ and $\lct(\CF) = \lct(\CF_v)$, which is preserved by $\xi$-twist for any $\xi\in\Reeb$. Hence 
\begin{eqnarray*}
S(\xi_0;\CF_\xi) \le S(\xi_0;\CF_{v,\xi}), \quad
\lam_\max (\xi_0;\CF_\xi) \le \lam_\max (\xi_0;\CF_{v,\xi}). 
\end{eqnarray*}
Then  
\begin{eqnarray*}
& & \lct(\CF_\xi) - (A(\xi_0)-c)S(\xi_0;\CF_\xi) -c\lam_\max (\xi_0;\CF_\xi)\\
&\ge& \lct(\CF_{v,\xi}) - (A(\xi_0)-c)S(\xi_0;\CF_{v,\xi}) -c\lam_\max (\xi_0;\CF_{v,\xi}). 
\end{eqnarray*}
In other words, 
$$\BD(\CF_\xi) - c\BJ(\xi_0;\CF_\xi) \ge \BD(\CF_{v,\xi}) - c\BJ(\xi_0;\CF_{v,\xi}). $$
Since $\BD(\CF_\xi)=\BD(\CF)$ and $\BD(\CF_{v,\xi})=\BD(\CF_v)$ are independent of $\xi$, we conclude by taking supremum for all $\xi\in \Bt_\IR^+$: 
$\BD(\CF) - c\BJ_\IT(\xi_0;\CF) \ge \BD(\CF_{v}) - c\BJ_\IT(\xi_0;\CF_{v})\ge 0. $
\end{proof}

We are ready to prove the second main theorem of the paper, that is, Ding-polystability for special test configurations implies reduced uniform Ding-stability for filtrations.  

\begin{proof}[Proof of Theorem \ref{Theorem: Intro. Ding-ps for special test configurations implies reduced uniform Ding}]
By Corollary \ref{Corollary. reduced uniform Ding for filtrations and finitely generated filtrations}, assume that $(X,\D,\xi_0)$ is a Ding-semistable log Fano cone singularity ($\delta_\IT(X,\D,\xi_0)= 1$), which is not $\IT$-equivariantly reduced uniformly Ding-stable for finitely generated filtrations. Then by Lemma \ref{Lemma. reduced uniform Ding. f.g. filtrations = weakly special divisors}, there exists a sequence of Koll\'ar components $v_i\in\Val^{\IT,*}_{X,x}$ (not of the form $\wt_\xi$), such that $\BD(v_i)/\BJ_\IT(\xi_0;v_i)\to 0$ as $i\to \infty$. Rescale $\xi_0$ and $v_i$ such that $A(\xi_0)=A_{X,\D}(v_i)=1$. Since $(X,\D,\xi_0)$ is Ding-semistable, we see that 
\begin{eqnarray*}
1\le \frac{A_{X,\D}(v_{i,b\xi_0})}{S(\xi_0; v_{i,b\xi_{0}})} 
= \frac{A_{X,\D}(v_i)+b}{S(\xi_0; v_i)+b}
\le \frac{A_{X,\D}(v_i)}{S(\xi_0; v_i)}
\end{eqnarray*}
for any $b>0$. By definition of the reduced J-norm, we also have $\BJ_\IT(\xi_0;v_{i,b\xi_0}) = \BJ_\IT(\xi_0;v_{i})$. Replacing $v_i$ by $\frac{1}{2}v_{i,\xi_0}$, we may assume that $v_i(\CF_{\wt_{\xi_0}})>\frac{1}{2}$ and $A_{X,\D}(v_i)=1$. Since $\CF_{\wt_{\xi_0}}$ is linearly bounded by $\fm_x$, there exists $\vep_0>0$ such that $v_i(\fm_x)>\vep_0$ for any $i$. 

By \cite[Theorem 3.3]{Xu19}, there exists an $N$-complement $\Gamma$ of $(X,\D)$ such that $v_i\in \LC(X,\D+\Gamma)$, and (by taking subsequnce) $v=\lim_{i\to \infty} v_i \in \Val_{X,x}^{\IT,*}$ exists in some simplicial cone $\sigma\seq \LC(X,\D+\Gamma)$ which is not of product type by the argument of \cite[Proof of Theorem A.5]{XZ19}. By Lemma \ref{Lemma. lam_max is continuous on quasi-monomial cone} and Corollary \ref{Corollary. S is concave on quasi-monomial cone}, the functions $\lam_\max(\xi_0;-)$ and $S(\xi_0;-)$ are continuous on $\sigma$. Hence 
$$\mathop{\limsup}_{i\to \infty} \BJ_\IT(\xi_0;v_i) \le  \mathop{\lim}_{i\to\infty}\BJ(\xi_0;v_i) = \BJ(\xi_0;v) < +\infty. $$ 
Thus $\BD(v)=\lim_{i\to \infty} \BD(v_i)=0$ and $v$ minimizes $\delta_\IT(X,\D,\xi_0)=1$. 

Finally, since $v$ minimizes $\delta_\IT(X,\D,\xi_0)=1$, it is a Koll\'ar valuation by Theorem \ref{Theorem. Local optimal destablization}. We can perturb $v$ and get a Koll\'ar component $E$ minimizing $\delta_\IT(X,\D,\xi_0)=1$ also by Theorem \ref{Theorem. Local optimal destablization}, which is not of product type since this holds for $v$. It induces a non-product special test configuration of $(X,\D,\xi_0)$ with vanishing $\BD$. We conclude that $(X,\D,\xi_0)$ is not $\IT$-equivariantly Ding-polystable for special test configurations. 
\end{proof}

\begin{proof}[Proof of Corollary \ref{Corollary. Intro. all equivalent}]
The main strategy was explained in the introduction. By Corollary \ref{Corollary. Fut ge Ding}, we get $(4)\Rightarrow (2) \Rightarrow (1) \Leftrightarrow (3)$ and $(8)\Rightarrow (6) \Rightarrow (5) \Leftrightarrow (7)$. We have $(8)\Rightarrow (4)$ by Corollary \ref{Corollary. reduced uniform to polystable}. The implication $(9)\Rightarrow (8)$ is clear since every test configuration corresponds to a linearly bounded filtration. We conclude by showing $(3)\Rightarrow (9)$ in Theorem \ref{Theorem: Intro. Ding-ps for special test configurations implies reduced uniform Ding}. 
\end{proof}

\subsection{Reduced delta invariants}
In the following, we assume that $\Fut|_N=0$, which means that for any $\xi\in \Reeb$, we have $A(\xi)=A(\xi_0)S(\xi_0;\xi)$. By linear extension, this also holds for any $\xi\in\IN_{\IR}$. 
We define the {\it reduced delta invariant} of the log Fano cone $(X,\D,\xi_0)$ by 
\begin{eqnarray}
\delta^\red_{\IT}(X,\D,\xi_0) := \mathop{\inf}_{v\in \Val^{\IT,*}_{X,x} \setminus \Reeb} \mathop{\sup}_\xi  \frac{A_{X,\D}(v_\xi)}{A(\xi_0)S(\xi_0;v_\xi)}, 
\end{eqnarray} 
where the supremum runs over all $\xi\in N_\IR$ such that $v_\xi \in \Val_{X,x}^{\IT,*}$. 

\begin{lem}
\label{Lemma. delta_red ge 1}
We always have $\delta^\red_{\IT}(X,\D,\xi_0)\ge 1$. 
\end{lem}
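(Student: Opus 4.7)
The plan is to bound $\delta^\red_\IT(X,\D,\xi_0)$ from below by fixing an arbitrary $v\in \Val^{\IT,*}_{X,x}\setminus \Reeb$ and exhibiting a one-parameter family of admissible twists along which the quotient $\frac{A_{X,\D}(v_\xi)}{A(\xi_0)S(\xi_0;v_\xi)}$ tends to $1$. Since the supremum over $\xi$ dominates this limit, we obtain $\sup_\xi \ge 1$, and taking infimum over $v$ gives $\delta^\red_\IT \ge 1$. In spirit this is the same calculation as in Lemma \ref{Lemma: delta le 1}, but with the roles of $v$ and $\xi$ reversed: there the outer operation is an infimum over $v$, here it is an infimum over $v$ of an inner supremum over $\xi$.

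For the concrete realization, take $\xi = t\xi_0$ with $t>0$. By Lemma \ref{Lemma. lam_min xi_0 twist}, we have $\lam_\min(\xi_0;v_{t\xi_0}) = \lam_\min(\xi_0;v)+t>0$, so $v_{t\xi_0}\in \Val^{\IT,*}_{X,x}$ is an admissible twist. Then Lemma \ref{Lemma. A(v_xi)=A(v)+A(xi)} gives
\[
A_{X,\D}(v_{t\xi_0}) \;=\; A_{X,\D}(v) + tA(\xi_0),
\]
while Lemma \ref{Lemma: twist of valuations and filtrations}, Lemma \ref{Lemma. S(F_xi)=S(F)+S(xi)} together with the normalization $S(\xi_0;\xi_0)=1$ yield
\[
S(\xi_0;v_{t\xi_0}) \;=\; S(\xi_0;v) + t.
\]

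Combining these, the ratio becomes
\[
\frac{A_{X,\D}(v_{t\xi_0})}{A(\xi_0)S(\xi_0;v_{t\xi_0})} \;=\; \frac{A_{X,\D}(v) + tA(\xi_0)}{A(\xi_0)S(\xi_0;v) + tA(\xi_0)} \;\longrightarrow\; 1 \quad \text{as } t\to +\infty,
\]
since the numerator and denominator have the same leading term $tA(\xi_0)$. Hence $\sup_\xi \frac{A_{X,\D}(v_\xi)}{A(\xi_0)S(\xi_0;v_\xi)} \ge 1$ for every such $v$, and the infimum over $v\in \Val^{\IT,*}_{X,x}\setminus\Reeb$ is $\ge 1$.

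No step here is really an obstacle; the argument only requires homogeneity of $A$ and additivity of $S(\xi_0;-)$ along the twist by $t\xi_0$, both of which are already established. The standing assumption $\Fut|_N=0$ plays no role in this particular computation (it only pins down that the boundary case $v = \wt_\xi \in \Reeb$ would give ratio exactly $1$, which is why those $v$ are excluded from the infimum in the definition), so I would just record the lemma as a direct analog of Lemma \ref{Lemma: delta le 1}.
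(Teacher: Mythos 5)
Your proof is correct and is essentially the paper's argument: both take the twist $\xi = a\xi_0$, compute $A_{X,\D}(v_{a\xi_0}) = A_{X,\D}(v) + aA(\xi_0)$ and $A(\xi_0)S(\xi_0;v_{a\xi_0}) = A(\xi_0)S(\xi_0;v) + aA(\xi_0)$, and let $a\to\infty$ to force the ratio to $1$, which gives $\sup_\xi \ge 1$ for each $v$. Your remark that $\Fut|_N=0$ is not used here is also accurate.
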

\begin{proof}
If there exists $v\in \Val^{\IT,*}_{X,x} $ such that $\frac{A_{X,\D}(v)}{A(\xi_0)S(\xi_0;v)} <1$, then \begin{eqnarray*} \frac{A_{X,\D}(v_{a\xi_0})}{A(\xi_0)S(\xi_0;v_{a\xi_0})} = \frac{A_{X,\D}(v)+aA(\xi_0)}{A(\xi_0)S(\xi_0;v)+aA(\xi_0)} \to 1, \quad a\to +\infty.  \end{eqnarray*} 
\end{proof}

\begin{rmk}\rm
If we do not assume that $\Fut|_N=0$ in the definition of $\delta^\red_\IT$, then using the argument in the above lemma we will see that $\delta^\red_\IT=+\infty$ whenever $\Fut|_N \ne 0$, which is not well-behaved. 
\end{rmk}

The following theorem says that the reduced uniform stability of a log Fano cone $(X,\D,\xi_0)$ (satisfying $\Fut|_N=0$) can be characterized by $\delta^\red_\IT(X,\D,\xi_0)$. 

\begin{thm}
\label{Lemma. characterization of delta^red > 1}
The following statements are equivalent. 
\begin{enumerate}
\item $(X,\D,\xi_0)$ is reduced uniformly Ding-stable for filtrations; 
\item $\delta^\red_{\IT}(X,\D,\xi_0)>1$; 
\item $\delta_\IT(X,\D,\xi_0)= 1$ and any minimizer $v\in\Val_{X,x}^{\IT,*}$ of $\delta_\IT(X,\D,\xi_0)$ is of product type; 
\item $(X,\D,\xi_0)$ is Ding-polystable for special test configurations. 
\end{enumerate}
\end{thm}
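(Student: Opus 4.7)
I will establish the equivalences via the cycle $(4) \Rightarrow (1) \Rightarrow (2) \Rightarrow (3) \Rightarrow (4)$. The first implication is Theorem \ref{Theorem: Intro. Ding-ps for special test configurations implies reduced uniform Ding}.

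For $(1) \Rightarrow (2)$, the central observation is that under the assumption $\Fut|_N = 0$, together with Lemma \ref{Lemma. A(v_xi)=A(v)+A(xi)} and Lemma \ref{Lemma. S(F_xi)=S(F)+S(xi)}, the quantity
\[
B(v) := A_{X,\D}(v) - A(\xi_0) S(\xi_0; v)
\]
is invariant under the twist $v \mapsto v_\xi$ for any $\xi \in N_\IR$. For $v \in \Val^{\IT,*}_{X,x} \setminus \Reeb$, condition $(1)$ gives $B(v) \geq \BD(\CF_v) \geq \eta \BJ_\IT(\xi_0; v) > 0$, where strict positivity follows since $\BJ_\IT(\xi_0;v) = 0$ combined with Corollary \ref{Corollary: J(v)=0} and the saturation of $\CF_v$ would force $v \in \Reeb$. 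I then compute
\[
\sup_\xi \frac{A_{X,\D}(v_\xi)}{A(\xi_0) S(\xi_0; v_\xi)} = 1 + \frac{B(v)}{A(\xi_0) \inf_\xi S(\xi_0; v_\xi)} \geq 1 + \frac{\eta}{A(\xi_0)(n-1)} > 1,
\]
using the upper bound $\inf_\xi S(\xi_0; v_\xi) \leq (n-1)\,\BJ_\IT(\xi_0; v)$ from Theorem \ref{Corollary. S_T bounded by J_T}. Since this lower bound is uniform in $v$, we obtain $\delta^\red_\IT > 1$.

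For $(2) \Rightarrow (3)$, Lemma \ref{Lemma: delta le 1} already gives $\delta_\IT \leq 1$. If $\delta_\IT < 1$ then some $v$ (necessarily not in $\Reeb$) has $B(v) < 0$, and twisting by $b\xi_0$ with $b \to +\infty$ sends $S(\xi_0; v_{b\xi_0}) = S(\xi_0;v) + b \to +\infty$ while keeping $B$ fixed, so the $\sup$ of the ratio equals $1$, contradicting $(2)$. Hence $\delta_\IT = 1$. For any minimizer $v$ we have $B(v) = 0$; if $v \notin \Reeb$, the ratio is identically $1$ under all admissible twists, again contradicting $(2)$. For $(3) \Rightarrow (4)$, $\delta_\IT = 1$ yields Ding-semistability by Theorem \ref{Theorem. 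D-ss equiv. delta >= 1}; given a special test configuration with $\BD = 0$, the argument in the proof of Theorem \ref{Theorem: Intro. Ding-ps for special test configurations implies Ding-ps} identifies its saturated filtration with $\CF_v$ for a weakly special $v$ with $B(v) = 0$, so $v$ minimizes $\delta_\IT$; by $(3)$, $v \in \Reeb$, and the test configuration is of product type.

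The main obstacle I anticipate is $(1) \Rightarrow (2)$: the twist-invariance of $B(v)$ depends crucially on $\Fut|_N = 0$, and producing the \emph{uniform strict} gap $\delta^\red_\IT > 1$ (rather than merely $\geq 1$) requires the sharp upper bound $\inf_\xi S(\xi_0; v_\xi) \leq (n-1)\,\BJ_\IT(\xi_0; v)$ from Theorem \ref{Corollary. S_T bounded by J_T}.
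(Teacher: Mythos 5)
Your proof is correct and follows the same cycle of implications $(4)\Rightarrow(1)\Rightarrow(2)\Rightarrow(3)\Rightarrow(4)$ and the same key inputs as the paper (Theorem \ref{Theorem: Intro. Ding-ps for special test configurations implies reduced uniform Ding}, Theorem \ref{Corollary. S_T bounded by J_T}, Lemma \ref{Lemma: delta le 1}). One small difference is worth flagging: you work with $B(v)=A_{X,\D}(v)-A(\xi_0)S(\xi_0;v)$, whereas the paper works with $\BD(v)=\lct(\CF_v)-A(\xi_0)S(\xi_0;v)$. Since $\lct(\CF_v)\le A_{X,\D}(v)$, the paper's displayed identity $\frac{A_{X,\D}(v_\xi)}{A(\xi_0)S(\xi_0;v_\xi)}=1+\frac{\BD(v_\xi)}{A(\xi_0)S(\xi_0;v_\xi)}$ is really an inequality $\ge$ for general $v$; your substitution of $B(v)$ turns it into a genuine equality, and the remaining bounds $B(v)\ge\BD(\CF_v)\ge\eta\BJ_\IT(v)$ still close the argument, so your version is slightly cleaner. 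You also correctly note (as the paper tacitly assumes) that $\BJ_\IT(\xi_0;v)>0$ for $v\notin\Reeb$, via Corollary \ref{Corollary: J(v)=0} and the existence of the twist minimizer from Lemma \ref{Lemma: J is convex w.r.t twist}, so the division is legitimate. Your $(2)\Rightarrow(3)$ splits into two cases whereas the paper argues in one step directly that $v\notin\Reeb$ forces the untwisted ratio to exceed $1$; both are fine. For $(3)\Rightarrow(4)$ you route through the machinery of Theorem \ref{Theorem: Intro. Ding-ps for special test configurations implies Ding-ps}, while the paper uses the more elementary fact that the filtration of a special test configuration is already $c\CF_v$ for a Koll\'ar component $v$, skipping the saturation-and-$\lct$-minimizer step; the lighter argument suffices because the test configuration is special, not merely normal.
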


\begin{proof}
For $(1) \Rightarrow (2)$, 
there exists $\eta>0$ such that $\BD(\CF)\ge \eta\cdot \BJ_\IT(\CF)$ for any $\IT$-invariant linearly bounded filtration $\CF$. In particular $(X,\D,\xi_0)$ is Ding-semistable, hence $\Fut|_N=0$. For any $v\in \Val^{\IT,*}_{X,x}$, we have 
\begin{eqnarray*}
\mathop{\sup}_\xi  \frac{A_{X,\D}(v_\xi)}{A(\xi_0)S(\xi_0;v_\xi)} 
&=& 1+\mathop{\sup}_\xi  \frac{\BD(v_\xi)}{A(\xi_0)S(\xi_0;v_\xi)} 
= 1+\mathop{\sup}_\xi \frac{\BD(v)}{A(\xi_0)S(\xi_0;v_\xi)}  \\
&=& 1+\frac{\BD(v)}{A(\xi_0)\mathop{\inf}_\xi S(\xi_0;v_\xi)} \\
&\ge& 1+\frac{\BD(v)}{(n-1)A(\xi_0) \BJ_\IT(\xi_0;v)} 
\ge 1+ \frac{\eta}{(n-1)A(\xi_0)}, 
\end{eqnarray*}
where the first inequality follows from Theorem \ref{Corollary. S_T bounded by J_T}. 

Next we prove $(2) \Rightarrow (3)$. Since $\Fut|_N=0$, we see that if $\delta^\red_{\IT}(X,\D,\xi_0)>1$, then for any $v\in\Val_{X,x}^{\IT,*}\setminus \Reeb$, there exists $\xi\in N_\IR$ such that
\begin{eqnarray*} \frac{A_{X,\D}(v)+A(\xi)}{A(\xi_0)S(\xi_0;v)+A(\xi)}=\frac{A_{X,\D}(v_{\xi})}{A(\xi_0)S(\xi_0;v_{\xi})} >1. \end{eqnarray*} 
Hence $\frac{A_{X,\D}(v)}{A(\xi_0)S(\xi_0;v)} >1$. 

Then we prove $(3) \Rightarrow (4)$. Assume that $(X,\D,\xi_0)$ is not Ding-polystable for special test configurations. Then there exists a non-product type special test configuration $(\CX,\D_\CX,\xi_0;\eta)$ satisfying $\BD(\CX,\D_\CX,\xi_0;\eta)= 0$. Let $\CF$ be the corresponding filtration, then $\CF=c\CF_v$ for some $c>0$ and some non-product type Koll\'ar component $v$. Hence $v$ is a non-product type minimizer of $\delta_\IT(X,\D,\xi_0)$. We get a contradiction. 

Finally, $(4) \Rightarrow (1)$ follows from Theorem \ref{Theorem: Intro. Ding-ps for special test configurations implies reduced uniform Ding}. 
\end{proof}


\begin{cor}
Let $(X,\D,\xi_0)$ be a log Fano cone singularity satisfying $\delta_\IT(X,\D,\xi_0)= 1$. If $\delta^{\red}_\IT(X,\D,\xi_0) = 1$, then there exists a $\IT$-invariant Koll\'ar component $E$ over $(X,\D)$ such that $A_{X,\D}(E)=A(\xi_0)S(\xi_0;E)$ and $\ord_E$ is not of the form $\wt_\xi$ for any $\xi\in\Bt_\IR^+$. 
\end{cor}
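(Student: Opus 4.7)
The plan is to extract $E$ from the failure of reduced uniform stability, using Theorem~\ref{Lemma. characterization of delta^red > 1} as a black box. By Lemma~\ref{Lemma. delta_red ge 1}, $\delta^\red_\IT \ge 1$ always holds, so the hypothesis $\delta^\red_\IT(X,\D,\xi_0)=1$ says precisely that condition~(2) of that theorem fails. The equivalence $(2)\Leftrightarrow(4)$ then forces that $(X,\D,\xi_0)$ is not Ding-polystable for special test configurations. Combined with Ding-semistability, which follows from $\delta_\IT(X,\D,\xi_0)=1$ via Theorem~\ref{Theorem. D-ss equiv. delta >= 1}, this guarantees the existence of a non-product $\IT$-equivariant special test configuration $(\CX, \D_\CX, \xi_0;\eta)$ with $\BD(\CX, \D_\CX, \xi_0;\eta) = 0$.

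The second step is to read off the Koll\'ar component $E$ from this test configuration. As already observed in the proof of $(3)\Rightarrow(4)$ of Theorem~\ref{Lemma. characterization of delta^red > 1}, the filtration $\CF$ associated to a $\IT$-equivariant special test configuration has the form $c\CF_E$ for some $c>0$ and some $\IT$-invariant Koll\'ar component $E$ over $(X,\D)$. Since both $\lct$ and $S(\xi_0;\cdot)$ are homogeneous of degree one in the rescaling $\CF \mapsto c\CF$, the same holds for $\BD$, and the vanishing $\BD(\CF)=0$ unwinds to
\[
A_{X,\D}(E) \;=\; A(\xi_0)\,S(\xi_0; E),
\]
which is the first desired equality.

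Finally, the property that $\ord_E$ is not of the form $\wt_\xi$ for any $\xi\in\Bt_\IR^+$ is simply the non-product nature of $(\CX,\D_\CX,\xi_0;\eta)$ repackaged: if $\ord_E$ were a toric valuation $\wt_\xi$, the Rees construction would recover a product-type test configuration, a contradiction with our choice. I do not anticipate any substantial obstacle, since the heavy lifting is already encapsulated in Theorem~\ref{Lemma. characterization of delta^red > 1}; the only points requiring attention are the dictionary between $\IT$-equivariant special test configurations and $\IT$-invariant Koll\'ar components and the compatibility of $\BD$ with rescaling, both of which have been used throughout the preceding sections.
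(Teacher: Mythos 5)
Your proof is correct and slightly streamlines the paper's own argument. The paper enters the chain of equivalences of Theorem~\ref{Lemma. characterization of delta^red > 1} at condition~(1): it deduces from $\delta^\red_\IT=1$ that $(X,\D,\xi_0)$ is not reduced uniformly Ding-stable, and then re-runs the constructive proof of Theorem~\ref{Theorem: Intro. Ding-ps for special test configurations implies reduced uniform Ding} (compactness of weakly special valuations, passage to a quasi-monomial limit, Koll\'ar-valuation argument, Diophantine perturbation) to produce the Koll\'ar component. You instead enter at condition~(4): using $(2)\Leftrightarrow(4)$ you get failure of Ding-polystability for special test configurations, and then read $E$ directly off the filtration $\CF = c\,\CF_E$ of a destabilizing special test configuration, exactly as in the paper's $(3)\Rightarrow(4)$ step. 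The two routes rely on the same underlying machinery, but yours avoids re-invoking the limiting argument and is somewhat more direct. One small point worth making explicit in a write-up: the identity $\lct(X,\D;\CF_E)=A_{X,\D}(E)$, which you implicitly use to convert $\BD(\CF_E)=0$ into $A_{X,\D}(E)=A(\xi_0)S(\xi_0;E)$, holds because Koll\'ar components are weakly special (Theorem~\ref{Theorem. local weakly special criterion}).
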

\begin{proof}
By Lemma \ref{Lemma. characterization of delta^red > 1}, the log Fano cone is not reduced uniformly Ding-stable for filtrations. Hence by the proof of Theorem \ref{Theorem: Intro. Ding-ps for special test configurations implies reduced uniform Ding}, there exists a Koll\'ar component $E$ not of the form $\wt_\xi$ for any $\xi \in \Reeb$ minimizing $\delta_\IT(X,\D,\xi_0)= 1$. The proof is finished. 
\end{proof}

\appendix

\section{Okounkov bodies}

In this appendix, we aim to prove Lemma \ref{Lemma. Blum-Jonsson uniform bound of S_m} using the theory of Okounkov bodies developed by \cite{KK14,BLQ22}. 

Let $R$ be an integral domain over $\Ik$. Equip the group $\IZ^n$ with a total order ``$\le$'' respecting addition, that is, $a\le b \Rightarrow a+c \le b+c$ for any $a,b,c\in \IZ^n$. For any $\Ik$-vector space $V$, we denote by 
$$V^*=V\setminus \{0\}. $$
A {\it $\IZ^n$-valuation} $\fv: R^* \to \IZ^n$ is a function satisfying:  
\begin{enumerate}
\item $c\in \Ik^* \Rightarrow \fv(c) = 0$;
\item $f,g\in R^* \Rightarrow \fv(fg)=\fv(f)+\fv(g)$;
\item $f,g\in R^* \Rightarrow \fv(f+g)\ge \min\{\fv(f),\fv(g)\}$.
\end{enumerate}
We say that $\fv$ has {\it one-dimensional leaves} if, whenever $\fv(f) = \fv(g)$, there exists $c\in \Ik$ such that $\fv(g + cf) > \fv(g)$. 
By definition $\CS = \fv(R^*) \cup \{0\}$ is an additive sub-semigroup of $\IZ^n$, which is called the {\it value
semigroup} of $(R, \fv)$. We denote by $\Cone(\CS)\seq \IR^n$ the closed convex cone spanned by $\CS$. It is called {\it strongly convex} if there exists a linear function $\ell: \IR^n \to\IR$ such that $\ell^{-1}(0)\cap \Cone(\CS) = \{0\}$. 

\begin{defi}[Good valuations]\rm
\label{Definition. good valuations}
Let $(R,\fm)$ be the local domain of a closed point $x\in X$ in an $n$-dimensional normal variety $X$ over $\Ik$. We say that a $\IZ^n$-valuation $\fv$ on $R$ is {\it good} if
\begin{enumerate}
\item $\fv$ has one-dimensional leaves; 
\item $\CS$ generates $\IZ^n$ as a group; 
\item $\Cone(\CS)\seq \IR^n$ is strongly convex, that is, there exists a linear function $\ell: \IR^n \to\IR$ such that $$\Cone(\CS)\seq \ell_{>0};$$
\item There exist $r_0 >0$ and a linear function $\ell: \IR^n \to\IR$ such that, for any $f\in R\setminus \{0\}$, we have
$$\ord_\fm(f) \ge r_0 \cdot \ell(\fv(f)). $$  
\end{enumerate}
\end{defi}

In the following, let $x\in(X = \Spec(R),\D)$ be a klt singularity. 

\begin{lem} \cite[Lemma A.2]{BLQ22}
\label{Lemma. BLQ good valuation}
Let $v_0\in\Val_{X,x}^*$ be a quasi-monomial valuation. Then there exists a vector $\Balpha_0\in \IR_{\ge 0}^n$ and a good valuation $\fv$ on $R$ such that 
$$v_0(f) = \la\Balpha_0, \fv(f)\ra, \quad \forall f\in R^*. $$
Moreover, we can take $\ell=\la\Balpha_0,-\ra$ for condition (3) and (4) of a good valuation. 
\end{lem}

We fix the quasi-monomial valuation $v_0\in\Val_{X,x}^*$ and let $\CF_0 = \CF_{v_0}$. Let $\Balpha_0 = (\alpha_1,\cdots,\alpha_r,0,\cdots,0) \in \IR_{\ge 0}^n$ and the good valuation $\fv$ be as in Lemma \ref{Lemma. BLQ good valuation}. 
\begin{lem}
\label{Lemma. Semigroup given by filtration}
For any linearly bounded filtration $\CF$, $l_0\in \IZ_{\ge1}$ and $\lam,t\ge 0$, we denote by 
\begin{eqnarray*}
\label{}
\Gamma_{m,\lam}(\CF) &:=&  \fv(\CF^{\lam}R^*)\setminus \fv(\CF_0^{>m}R^*) \seq \IN^n,  \\
\Gamma(\CF^{(t)}) &:=& \{(\Bbeta, m)\in \IN^n\times l_0\IN \mid \Bbeta \in \Gamma_{m,mt}(\CF)\} \cup \{(0,0)\}. 
\end{eqnarray*}
Then there exists $l_0\in\IZ_{\ge 1}$ such that the sub-semigroup $\Gamma(\CF^{(t)})\seq \IN^n\times l_0\IN$ satisfies: 
\begin{enumerate}
\item $\Gamma(\CF^{(t)}) \cap (\IN^n\times\{0\}) = \{(0,0)\}$; 
\item $\Gamma(\CF^{(t)})$ is contained in a sub-semigroup $B\seq \IN^n\times l_0\IN$ which is generated by finitely many $(v_i,l_0)\in \IN^n\times l_0\IN$; 
\item $\Gamma(\CF^{(t)})\seq \IN^n\times l_0\IN$ generates $\IZ^n\times l_0\IZ$ as a group. 
\end{enumerate}
\end{lem}

\begin{proof}
For any $(\Bbeta,m), (\Bbeta',m') \in \Gamma(\CF^{(t)})$ with $m,m'>0$, there exist $s\in \CF^{mt}, s' \in \CF^{m't}$ such that $v_0(s)\le  m, v_0(s')\le m'$ and $\Bbeta=\fv(s), \Bbeta'=\fv(s')$. Hence $v_0(ss')\le m+m'$, $ss'\in\CF^{(m+m')t}$ and $\fv(ss') =\Bbeta+\Bbeta'$. In particular, $(\Bbeta+\Bbeta',m+m') \in \Gamma(\CF^{(t)})$. Hence $\Gamma(\CF^{(t)})$ is a semigroup. 

Since $\Gamma_{0,0}(\CF)=\fv(\CF^0 R^*)\setminus \fv(\CF_0^{>0}R^*) = \{0\}$, we see that condition (1) holds. 
Recall that $\Cone(\CS)$ is strongly convex and $\Cone(\CS) \seq \ell_{>0}$ for $\ell=\la\Balpha_0,-\ra$. Then we may enlarge $\CS$ and get a finitely generated semigroup $\widetilde{\CS} \seq \IN^n\cap \ell_{>0}$. There exists $l_0\in\IN$ such that $\widetilde{\CS}$ is generated by the finite subset $\widetilde{\CS}\cap \ell_{\le l_0}$. Then we may choose a linear function $\tell = \la\tilde{\Balpha},-\ra$ with $\tilde{\Balpha}=(\talpha_1,\cdots,\talpha_n)\in\IR^n_{>0}$ (where $\talpha_i>0$ are small enough) such that 
\begin{eqnarray*}
\widetilde{\CS}\cap \ell_{\le l_0} \seq \IN^n\cap \tell_{\le l_0}. 
\end{eqnarray*}
Hence
\begin{eqnarray*}
\Gamma(R) &=& \{(\Bbeta, m)\in \IN^n\times l_0\IN \mid \Bbeta = \fv(s), s\in R^*, v_0(s) \le m \} \\
&=& \{(\Bbeta, m)\in \IN^n\times l_0\IN \mid \Bbeta \in \CS, \ell(\Bbeta) \le m \}\\ 
&\seq& \{(\Bbeta, m)\in \IN^n\times l_0\IN \mid \Bbeta \in \tCS, \ell(\Bbeta) \le m \}\\ 
&\seq& \{(\Bbeta, m) \in \IN^n\times l_0\IN \mid \tell(\Bbeta) \le m\} =:B.  
\end{eqnarray*}
Then $B\seq \IN^n\times l_0\IN$ is a sub-semigroup generated by $\{(\Bbeta,l_0)\mid \Bbeta \in \IN^n\cap \tell_{\le l_0}\}$, which is a finite set. 
We see that condition (2) holds since $\Gamma(\CF^{(t)}) \seq \Gamma(\CF^{(0)})=\Gamma(R) \seq B$. 

Since $\fv:R^*\to \IZ^n$ is a good valuation, there exist $f_i$ in the fractional field of $R$ such that $\fv(f_i) = e_i$, where $\{e_1, \cdots, e_n\} \seq \IN^{n}$ is the standard basis. We also set $f_0 = 1$ and $e_0 = 0 \in \IN^n$. Pick $s_0\in R$ such that $f_is_0 \in R$ for any $1\le i\le n$. Let $\Bbeta_0 = \fv(s_0)$ and 
$$m_0 = \max\{ \lceil f_iv_0(s_0)\rceil \mid 0\le i\le n \} + 1. $$
Then $v_0(f_is_0)< m_0$ and $\fv(f_is_0) = e_i +\Bbeta_0$ for any $0\le i\le n$, that is, $(e_i+\Bbeta_0, m_0) \in \Gamma(\CF^{(t)})$. Moreover, $v_0(s_0)< m_0<m_0+1$ implies that $(\Bbeta_0, m_0+1) \in \Gamma(\CF^{(t)})$. Hence, the sub-group of $\IZ^n\times l_0\IZ$ generated by $\Gamma(\CF^{(t)})$ contains $(e_i, 0), 1\le i\le n$ and $(0,1)$, that is, the whole $\IZ^n\times l_0\IZ$. So condition (3) holds. 
\end{proof}

Let $C(\CF^{(t)}) \seq \IR_{\ge 0}^n\times\IR_{\ge 0}$ be the closed convex cone spanned by $\Gamma(\CF^{(t)})$. Then 
$$\BO(\CF^{(t)}) = C(\CF^{(t)}) \cap (\IR_{\ge 0}^n\times\{1\}) \seq \IR_{\ge 0}^n $$
is a closed convex body, which is called the {\it Okounkov body} of $\CF^{(t)}$ with respect to the quasi-monomial valuation $v_0$ (see \cite[Section 4]{HMQWZ}). We also have 
\begin{eqnarray*}
\BO(\CF^{(t)}) = \overline{\bigcup_{m\in\IN} \BO_{m,mt}(\CF)},\quad 
\BO_{m,mt}(\CF) = \frac{1}{m} \Gamma_{m,mt}(\CF). 
\end{eqnarray*}
If $t=0$ or $\CF=\CF_{\triv}$, we simply set
\begin{eqnarray*}
\Gamma_m = \fv(R^*)\setminus \fv(\CF_0^{m}R^*), \quad
\BO_m = \frac{1}{m}\Gamma_m, \quad
\BO = \overline{\bigcup_{m\in\IN} \BO_m}, 
\end{eqnarray*}
where $\BO$ is called the Okounkov body of $R$ with respect to $v_0$. 

\begin{cor}\cite[Proposition 2.1]{LM09}
The following limit exists 
\begin{eqnarray*}
\mathop{\lim}_{m\to \infty} \frac{\#(\BO_{m,mt}(\CF))}{m^n} = \vol(\BO(\CF^{(t)})). 
\end{eqnarray*}
\end{cor}

Note that for any $t\le t'$, we have $\BO(\CF^{(t)}) \supseteq \BO(\CF^{(t')})$. We define the {\it concave transform} of $\CF$ by 
\begin{eqnarray*}
G_\CF:\IR^n\to \IR\cup\{-\infty\}, \quad 
G_\CF(\By):= \sup\{t\in \IR_{\ge 0}\mid \By\in\BO(\CF^{(t)})\}.
\end{eqnarray*}
By definition, if $\By\in\IR^n\setminus \BO$, then $G_\CF(\By)=-\infty$; if $t\in\IR$, then 
\[
\{\By\in \IR^n\mid G_\CF(\By) \ge t\} = \BO(\CF^{(t)}), 
\]
which is a closed convex subset of $\IR^n$, hence $G_\CF:\IR^n \to\IR \cup \{-\infty\}$ is concave and upper-semicontinuous. In particular, $G_\CF$ is locally Lipschitz continuous on $\interior(\BO)$. 
By Lemma \ref{Lemma. BLQ good valuation}, we see that $G_{v_0}(\By) = \la \Balpha_0, \By \ra$ for some $\Balpha_0 \in \IR^n_{\ge 0}$. Hence 
\begin{eqnarray}
\BO = \{\By\in \Cone(\CS) \mid \la \Balpha_0, \By \ra \le 1\}. 
\end{eqnarray}
The asymptotic invariants used throughout the paper can be reformulated using Okounkov bodies: 
\begin{eqnarray*}
\mult(\CF_{0}) 
&=& \vol(v_0) 
\,\,\,=\,\,\, \vol(R/\CF_{0}^{(1)}) 
\,\,\,=\,\,\, n!\cdot \vol(\BO), \\
\mult(\CF_0;t^{-1}\CF) 
&=& \vol(\CF^{(t)}(R/\CF_{0}^{(1)})) 
\,\,\,=\,\,\, n! \cdot \vol(\BO(\CF^{(t)})), \\
\frac{n}{n+1}S(\CF_0;\CF) 
&=& \int_0^{\lam_\max(\CF_0;\CF)} \frac{\mult(\CF_0;t^{-1}\CF)}{\mult(\CF_0)} \dif t
\,\,\,=\,\,\, \int_0^{\lam_\max(\CF_0;\CF)} \frac{\vol(\CF^{(t)}(R/\CF_{0}^{(1)}))}{\vol(R/\CF_{0}^{(1)})} \dif t \\
&=& \int_0^{\lam_\max(\CF_0;\CF)} \frac{\vol(\BO(\CF^{(t)}))}{\vol(\BO)} \dif t 
\,\,\,=\,\,\, \int_0^{\lam_\max(\CF_0;\CF)} \frac{\vol(\BO : G_\CF\ge t)}{\vol(\BO)} \dif t \\
&=& \frac{1}{\vol(\BO)}\int_\BO G_\CF(\By) \LE(\dif\By), 
\end{eqnarray*}
where $\LE(\dif \By)$ is the Lebesgue measure on the Okounkov body $\BO$. 

\begin{proof}[Proof of Lemma \ref{Lemma. Blum-Jonsson uniform bound of S_m}]
Let $\CF_0=\CF_{v_0}$ and $N_m = \ell(R/\CF_{0}^m)$. Let $\{s_j\}_{j=1}^{N_m} \seq R$ be a sequence compatible with both $\CF_0$ and $\CF$ whose image in $R/\CF_{0}^m$ is a basis. Then 
\begin{eqnarray*}
\tS_m(\CF_0;\CF) 
= \sum_{j=1}^{N_m} \ord_\CF(s_j) 
&\le& m\cdot \sum_{j=1}^{N_m} G_\CF(\fv(s_j)) \\
&=& m\cdot \sum_{\By \in \BO_m} G_\CF(\By) 
\le m^{n+1} \int_{\BO} G_\CF(\By) \LE_m(\dif \By).
\end{eqnarray*}
Since $N_m/m^n \to \vol(\BO)$ as $m\to \infty$, there exists $m_1\in \IN$ such that 
$$\vol(\BO)\le (1+\frac{\vep}{2})\frac{N_m}{m^n}$$ for any $m\ge m_1$. Let 
$$\vep' = \frac{\vep \cdot \vol(\BO)}{(n+1)(2+\vep)}>0. $$
By \cite[Lemma 2.2]{BJ20}, there exists $m_2 = m_0(\vep')\in\IN$ such that 
\begin{eqnarray*}
\int_{\BO} G(\By) \LE_m(\dif \By) \le \int_{\BO} G(\By) \LE(\dif \By) + \vep' 
\end{eqnarray*}
for any concave function $G:\BO\to \IR$ with $0\le G\le 1$, and for any $m\ge m_2$. 
For any linearly bounded filtration $\CF$, we set $G= G_\CF/\lam_{\max}(\CF_0;\CF)$. Hence 
\begin{eqnarray*}
S''_m(\CF_0;\CF) 
&=& \frac{n+1}{n}\cdot\frac{\tS(\CF_0;\CF)}{mN_m} 
\,\,\,\le\,\,\, \frac{n+1}{n} \frac{1}{N_m/m^n} \int_{\BO} G_\CF(\By) \LE_m(\dif \By) \\
&\le& \frac{n+1}{n} \frac{1}{N_m/m^n} \Big(\int_{\BO} G_\CF(\By) \LE(\dif \By) + \vep'\lam_{\max}(\CF_0;\CF)\Big) \\
&=&  \frac{\vol(\BO)}{N_m/m^n} S(\CF_0;\CF) + \frac{n+1}{n}\frac{\vep'}{N_m/m^n} \cdot \lam_{\max}(\CF_0;\CF)  \\
&\le&  (1+\frac{\vep}{2}) S(\CF_0;\CF) + \frac{\vep}{2n} \cdot n S(\CF_0;\CF) 
\,\,\,=\,\,\, (1+\vep) S(\CF_0;\CF)
\end{eqnarray*}
for any linearly bounded filtration $\CF$ and for any $m\ge m_0= \max\{m_1,m_2\}$, where the third inequality follows from left hand side of (\ref{Eqnarray. S - lam_min bounded by lam_max - lam_min}). 
\end{proof}

The rest of this appendix is of independent interest. 
For a log Fano cone singularity $(X = \Spec(R),\D,\xi_0)$ with a good $\IT$-action, we can construct Okounkov bodies with additional nice properties. 
The following lemma gives a good valuation $\fv$ on the log Fano cone such that Lemma \ref{Lemma. BLQ good valuation} holds for many product-type valuations $v_0$. 

\begin{lem}
\label{Lemma. Appendix. good valuation cpt. torus action}
Let $\xi\in \Bt_\IR^+$ be a Reeb vector of rational rank $r$. Then there exist $\IQ$-linearly independent primitive vectors $\xi_1,\cdots,\xi_r\in N\cap \Bt_\IR^+$, whose spanning cone contains $\xi$, and a good valuation $\fv: R\setminus \{0\} \to \IZ^n$ satisfying $\pr_i(\fv(f)) = \la\alpha, \xi_i\ra$ for any $f\in R_\alpha$ and $1\le i\le r$. 

In particular, for any $\xi'= a_1\xi_1+\cdots+ a_r\xi_r$ with $a_i>0$, we have $\wt_{\xi'}(f) = \ell_{\xi'}(\fv(f))$ for any $f\in R$, where  $\ell_{\xi'}(\beta_1,\cdots,\beta_n) = a_1\beta_1+\cdots+a_r\beta_r$. 

We say that $\fv$ is a {\rm $\IT$-equivariant good valuation}. 
\end{lem}

\begin{proof}
Since $\xi\in\Bt_\IR^+$ is of rational rank $r$, there exist $\IQ$-linearly independent primitive vectors $\xi_1,\cdots,\xi_r\in N\cap \Bt_\IR^+$, whose spanning cone containing $\xi$. 
By \cite[Theorem 3.14]{XZ22} (see also \cite[Theorem 6.1]{Wang24}), there exists a dlt Fano type model $(Y,E=E_1+ \cdots + E_r) \to (X,\D)$ such that $\ord_{E_i} = \wt_{\xi_i}$. We may further choose prime divisors $E_{r+1},\cdots, E_{n} \seq Y$ such that $(Y, E_1+\cdots+E_n)$ is log smooth at some closed point $y\in E_1\cap\cdots\cap E_n$. Let $z_1,\cdots, z_n\in \CO_{Y,y}$ be the regular functions such that $E_i = \{z_i=0\}$ on some neighbourhood of $y$. We define the following function: 
$$\fv: \widehat{\CO}_{Y,y} \setminus \{0\} \to \IZ^n, \quad \fv(f) = \min\{\beta \in \IN^n \mid f_\beta \ne 0\},$$
where $f=\sum_{\beta} f_\beta z^\beta = \sum_{\beta=(\beta_1,\cdots,\beta_n)\in \IN^n} f_\beta \cdot z_1^{\beta_1}\cdots z_n^{\beta_n}$, and $\IZ^n$ is equipped with the lexicographic order. Then $\fv$ is a $\IZ^n$-valuation centered at the closed point $y\in Y$. In particular, it has one-dimensional leaves and Definition \ref{Definition. good valuations} (1) holds. 
As shown in [Cut13, Section 4], there exists $\beta'\in \CS$ such that $\beta'+e_i \in \CS$ for $1\le i\le n$, where $e_i \in \IN^n$ is the $i$-th standard basis vector. Hence $\CS$ generates $\IZ^n$ as a group and Definition \ref{Definition. good valuations} (2) holds. 

Moreover, for any $f\in R_\alpha$, we have $\ord_{E_i}(f) = \wt_{\xi_i}(f) = \la\alpha,\xi_i\ra$ for any $1\le i\le r$. Hence $f= z_1^{\la\alpha,\xi_1\ra}\cdots z_r^{\la\alpha,\xi_r\ra} \tilde{f}$ for some $\tilde{f}\in \CO_{Y,y}$ with $\ord_{E_i}(\tilde{f}) = 0$ for any $1\le i\le r$. In particular, $\pr_i(\fv(f)) = \la\alpha, \xi_i\ra$ for any $1\le i\le r$. 

Let $\xi'= a_1\xi_1+\cdots+ a_r\xi_r$ for some $a_i>0$. We choose the linear function $\ell_{\xi'}(\beta_1,\cdots,\beta_n)=a_1\beta_1+\cdots+ a_r\beta_r$. Then for any $f\in R_\alpha$, we have 
$$\wt_{\xi'}(f) = \la\alpha,\xi'\ra = a_1\la\alpha,\xi_1\ra +\cdots +a_r\la\alpha,\xi_r\ra = \ell_{\xi'}(\fv(f)).$$ 
Now the $\IZ^n$-valuation $\fv$ and the linear function $\ell_{\xi'}$ satisfy condition (3) and (4) of Definition \ref{Definition. good valuations}. Indeed, for any $f\in R$ with $\fv(f) \ne 0$, the decomposition $f=\sum_\alpha f_\alpha, f_\alpha \in R_\alpha$ satisfies $f_0=0$, since the $\IT$-action is good and $R_0 = \Ik$. Hence $\ell_{\xi'}(\fv(f)) \ge \min\{a_i\mid 1\le i\le r\} > 0$. We see that $\CS \seq (\ell_{\xi'})_{>0}$ and Definition \ref{Definition. good valuations} (3) holds. By the Izumi-type inequality \cite{Li18}, there exists a constant $c>0$ such that $\ell_{\xi'}(\fv(f))=\wt_{\xi'}(f)\le c\cdot \ord_{\fm_x}(f)$ for any $f\in R\setminus \{0\}$. Hence Definition \ref{Definition. good valuations} (4) holds. We conclude that $\fv$ is a good valuation. 
\end{proof}


\bibliographystyle{alpha}
\bibliography{ref}

\end{document}